\numberwithin{equation}{section}
\numberwithin{figure}{section}
\def\opn#1#2{\def#1{\operatorname{#2}}} % to make operators
\opn\chara{char} \opn\length{\ell}
\opn\projdim{proj\,dim} \opn\injdim{inj\,dim} \opn\rank{rank}
\opn\depth{depth} \opn\grade{grade} \opn\height{height}
\opn\embdim{emb\,dim} \opn\codim{codim}
\opn\Tr{Tr} \opn\bigrank{big\,rank}
\opn\superheight{superheight}\opn\lcm{lcm}
\opn\trdeg{tr\,deg}%
\opn\reg{reg} \opn\lreg{lreg}
\opn\Ker{Ker} \opn\Coker{Coker} \opn\Im{Im} \opn\Hom{Hom}
\opn\Tor{Tor} \opn\Ext{Ext} \opn\End{End} \opn\Aut{Aut} \opn\id{id}
\opn\nat{nat}
\opn\pff{pf}%   \pf exists already
\opn\Pf{Pf} \opn\GL{GL} \opn\SL{SL} \opn\mod{mod} \opn\ord{ord}
\def\Implies{\ifmmode\Longrightarrow \else
     \unskip${}\Longrightarrow{}$\ignorespaces\fi}
\def\implies{\ifmmode\Rightarrow \else
     \unskip${}\Rightarrow{}$\ignorespaces\fi}
\def\iff{\ifmmode\Longleftrightarrow \else
     \unskip${}\Longleftrightarrow{}$\ignorespaces\fi}
\newtheorem{Theorem}{Theorem}[section]
\newtheorem{Lemma}[Theorem]{Lemma}
\newtheorem{Remark}[Theorem]{Remark}
\newtheorem{Example}[Theorem]{Example}
\newtheorem{Assumption}[Theorem]{Assumption}
\let\epsilon=\varepsilon
\let\kappa=\varkappa
\opn\ini{in} \opn\inm{inm} \opn\Sym{Sym} \opn\diag{diag}
\opn\Ii{(i)} \opn\Iii{(ii)}
\begin{document}

\title{Adaptive finite element approximation of bilinear optimal control with fractional Laplacian}

%%\author{ Fangyuan Wang$^*$, Qiming Wang$^\diamond$, Zhaojie Zhou$^*$ }
\author[a]{ Fangyuan Wang}
\author[b]{ Qiming Wang}
\author[a]{ Zhaojie Zhou\thanks{Corresponding author, Zhaojie Zhou, Email:
zhouzhaojie@sdnu.edu.cn}}
\affil[a]{School of Mathematics and Statistics, Shandong Normal University, Jinan, 250014, China}
\affil[b]{School of Mathematical Sciences, Beijing Normal University, Zhuhai, 519087, China}

\date{}

\maketitle

\begin{abstract}
We investigate the application of a posteriori error estimates to a fractional optimal control problem with pointwise control constraints. Specifically, we address a
problem in which the state equation is formulated as an integral form of the fractional Laplacian equation, with the control variable embedded within the state equation as a coefficient. We propose two distinct finite element discretization approaches for an optimal control problem. The first approach employs a fully discrete scheme where the control variable is discretized using piecewise constant functions. The second approach, a semi-discrete scheme, does not discretize the control variable.
Using the first-order optimality condition, the second-order optimality condition, and
a solution regularity analysis for the optimal control problem, we devise a posteriori error
estimates. We subsequently demonstrate the reliability and efficiency of the
proposed error estimators. Based on the established error estimates framework, an adaptive refinement strategy is developed to help achieve the optimal convergence rate. The effectiveness of the refinement strategy is verified by numerical experiments.

\end{abstract}

%
% Uncomment for keywords
\vspace{2pc}
\noindent{\it Keywords}: adaptive finite element; optimal control; bilinear equation; fractional Laplacian; a posteriori error estimate
%
% Uncomment for Submitted to journal title message

%\submitto{Inverse Problems}
%
% Uncomment if a separate title page is required

%
% For two-column output uncomment the next line and choose [10pt] rather than [12pt] in the \documentclass declaration
%\ioptwocol
%
\section{Introduction}
In this paper we aim to investigate adaptive finite element approximation of the following optimal control problems governed by fractional PDEs with bilinear control:
    \begin{eqnarray}\label{object}
  \min\limits_{u\in U_{ad}} J(y,u):= \frac{1}{2} \|y-y_d\|^2_{L^2(\Omega)}
  +\frac{\lambda}{2} \|u\|^2_{L^2(\Omega)}
\end{eqnarray}
subject to
\begin{eqnarray}\label{state}\left\{ \begin{aligned}
 (-\Delta)^{s} y+uy&=f,\ &\mbox{in}\ \Omega,\\
   y&=0, \ &\mbox{on}\ \Omega^c
\end{aligned}\right.
   \end{eqnarray}
and the control constraints
\begin{eqnarray*}
U_{ad}=\Big\{v\in L^{\infty}(\Omega)| 0<a\leq v\leq b,  \ a, b\in R^+\Big\}.
  \end{eqnarray*}
Here the parameters $\lambda>0$ and $a, b \in R^+$ satisfy the condition that $ 0< a < b$. The domain $\Omega\subset R^d(d=2,3)$ is  a bounded Lipschitz domain, and $\Omega^c :=R^d\backslash\Omega$.
In the sequel, $y$ is the state and $u$ is the control variable.  The function $y_d\in L^2(\Omega)$ is referred to a desired state and   $f$ is a fixed source function in $H^{-s}(\Omega)$. The fractional Laplacian $(-\Delta)^{s}$ is defined as follows:
 \begin{eqnarray*}
  (-\Delta)^{s}y(x):=C(d,s) \ {\textrm{p.v.}}\int_{R^d} \frac{y(x)-y(w)}{| x-w|^{d+2s}}dw,\ C(d,s)=\frac{ 2^{2s}s\Gamma(s+\frac{d}{2})}{\pi^{d/2}\Gamma(1-s)},
  \end{eqnarray*}
where ''p.v.'' denotes the principal value of the integral:
\begin{eqnarray*}
{\textrm{p.v.}}\int_{R^d} \frac{y(x)-y(w)}{|x-w|^{d+2s}}dw=\lim\limits_{\epsilon\rightarrow  0}\int_{R^d\setminus B_{\epsilon}(v)} \frac{y(x)-y(w)}{|x-w|^{d+2s}}dw.
\end{eqnarray*}
Here $B_{\epsilon}(v)$ is a ball of radius $\epsilon$ centered at $x$.

PDE-constrained optimal control problems have attracted lots of attentions in the past decades following the pioneering work of J.L. Lions (\cite{lion}).
Lots of literatures are devoted to developing mathematical theory and numerical methods for different optimal control models such as distributed control problems and boundary control problems.
Among the PDE-constrained control models optimal control problem with bilinear controls became more and more popular in recent years.
Different from distributed control problems and boundary control problems bilinear controls enter the state equation as coefficients.
Therefore,  it can also be viewed as a parameter estimation problem.
This feature makes the the bilinear controls able to change some main
physical characteristics of the PDE system.

 At the same time
this feature also brings difficulty in studying the bilinear control problem mathematically and computationally.
For example,   the control variable entering in the state equation as a coefficient results in a nonlinear dependence of the solution on the control variable, which leads to a lack of uniqueness in the solution.
To the best of our knowledge, the first to provide a finite element discrete analysis for elliptic optimal control problems with bilinear control is \cite{k}, where a priori error estimates are derived.
Later, numerical discretization of bilinear optimal control problems constrained by various PDEs, such as elliptic equations (\cite{win,k}), convection-diffusion equations (\cite{huweiwei,borz}) and parabolic equations (\cite{sha,xu}) have
been widely studied.

Compared with  bilinear optimal control problems constrained by integer PDEs  the literature is relatively few for bilinear optimal control problems constrained by fractional PDEs.
In \cite{casas2018}  the authors provide an analysis for the existence of solutions and first as well as second order optimality conditions of a bilinear optimal control constrained by an evolution equation involving the fractional Laplace.  Using the second-order optimality conditions required for such problems, finite element discretization and a priori error estimate for optimal control problem constrained by fractional elliptic equations is studied in \cite{bifen}.
Aside from the nonlinear dependency of the state on the control variable, another key characteristic of the model (\ref{object})-(\ref{state}) is the fractional Laplacian operator, which is nonlocal and is becoming an increasingly relevant modeling tool in fields such as fluids and image denoising (\cite{con,ga}). It is worth noting that the solutions of fractional differential equations frequently feature singularities, even with smooth data input, necessitating the use of local refined meshes.
  Although adaptive mesh refinement has been found to be very useful in computational optimal control problems \cite{liu20012,liu2002,liu20031,hin,koh,gongyan}, but there is less work on   adaptive finite element methods for bilinear optimal control problems. In \cite{chang}, the optimal control problem for two-dimensional bilinear parabolic equations has been studied. The a posteriori error estimation results are verified using the adaptive finite element method. In \cite{zheng}, the authors propose an improvement on the a posteriori error estimator for the  bilinear optimal control problem constrained by elliptic equation given in \cite{liu}, giving a new definition of the error estimator and an analysis of its effectiveness.

In the present work we focus on  adaptive finite element approximatioin for the fractional bilinear optimal control problem (\ref{object})-(\ref{state}).
We discuss two ways for discretizing the optimal control issue (\ref{object})-(\ref{state}): the fully discrete approach, where the control variable is approximated by piecewise constant functions, tthe semi-discrete method, also known as variational discretization, in which the control space is not discretized. We derive residual type a posteriori error estimate to drive the adaptive mesh refinement and
 achieve higher accuracy at lower computational cost.
One of the difficulties is the nature of the residual, which  is not necessarily in
$L^2(\Omega)$. We refer to \cite{Fau} and introduce weighted residuals. With the help of the second-order sufficient optimality conditions, the Scott-Zhang operator and  the inverse estimate for the fractional Laplacian
we obtain the reliable and efficient analysis of a posteriori error estimates.
 With D\"{o}rfler's  marking criterion, an $h$ adaptive method driven by the a posterior error estimator is described. Finally, numerical examples are provided to validate the theoretical conclusions.

The organization of this paper is as follows. In Section 2, we introduce fractional Sobolev spaces, discretization of piecewise polynomials and some known facts useful for our purposes. In Section 3, we present some regularity and a posteriori error estimation results for the state equation. In Section 4, we first introduce the first-order and second-order optimality conditions, and then design two finite element schemes. In Section 5, we present the posterior error estimator and proved the reliability and efficiency of the estimator. In Section 6, we design an adaptive strategy that provides the best experimental convergence rate for the numerical examples presented.
\section{Preliminaries}
In this section, we introduce some preliminaries about fractional Sobolev spaces and recall some facts that will be used later.
\subsection{Sobolev spaces}
Let $C$ denote a generic constant, which is independent of the mesh parameters $h$ but that might depend on $s,\ d$ and $\Omega$. $C$'s value may change with each occurrence.
 For $s\geq0,$ we let space $H^s(\Omega)$ denote the Sobolev space of order $s$ endowed with the norm $\|\cdot\|_{H^s(\Omega)}$ and seminorm $|\cdot|_{H^s(\Omega)}$. We denote by $\widetilde{H}^s(\Omega)$ be the subspace of $H^s(\Omega)$ consisting of functions whose extension by zero to $R^d$ is in $H^s(R^d)$. For $0\leq s<1/2$, it is well known that $\widetilde{H}^s(\Omega)$ coincides with $H^s(\Omega)$. For $s\geq1/2$, we define $\widetilde{H}^s(\Omega)$ as the closure of $C^{\infty}_0(\Omega)$ in $H^s(R^d)$.
  Let $(\cdot,\cdot)$ denote the inner product of $L^2(\Omega)$ and $\langle\cdot,\cdot\rangle$ denote the duality between $\widetilde{H}^s(\Omega)$ and its dual space $(\widetilde{H}^s(\Omega))^*=\widetilde{H}^{-s}(\Omega)$ with $s\in[0,1].$

\subsection{Discretization}
 We begin by partitioning the domain $\Omega$ into simplices $T$ with size $h_{\bullet}:=|T|^{1/d}\ (h_{\bullet}\in L^{\infty}(\Omega))$, forming a conforming partition $ T_{\bullet}= \{T\}$.  Denote by $\mathbb{T}$ the collection of conforming and shape regular meshes that refine an initial mesh $T_{\bullet0}$. Let $h_{\star}=\max\limits_{T\in T_{\bullet}}h_{\bullet} $ and $|T|=\rm{diam}(T):=\sup_{x,y\in T}|x-y|$, whereas $|T|$ denotes the surface measure.

We define $T_{\bullet}$ as being $\gamma$-shape regular if
$$\frac{\rm{diam(T)}}{|T|^{1/d}}\leq \gamma ,\ \mbox{for\ all\ elements\ }T\in T_{\bullet}.$$
%For $d=3$, $T_{\bullet}$  is called $\lambda$-shape regular if
%$$\frac{\rm{diam(T)}}{|T|^{1/2}}\leq\lambda,\ \mbox{for\ all\ elements\ }T\in T_{\bullet}.$$
 For each element $T\in T_{\bullet}$ and $k\in \mathbb{N}_0$, we define the $k$-th order element patch inductively as follows:
 \begin{align*}
  & \Omega_{\bullet}^0(T):=T, T_{\bullet}^0(T):=\{T\},\\
 &\Omega_{\bullet}^k(T):=\mathrm{interior}(\bigcup\limits_{{T'}\in  T_{\bullet}^k(T)}\overline{T'}),\ \mbox{where}\ T_{\bullet}^k(T):=\{T'\in T_{\bullet}: \overline{T'}\cap \overline{\Omega_{\bullet}^{k-1}(T)}\neq\emptyset \}.
 \end{align*}
% For the first order patch, we abbreviate $\Omega_{\bullet}(T)=\Omega_{\bullet}^1(T).$

 Given a mesh $T_{\bullet},$  let  $\mathbb{V}_{T_{\bullet}}$ be the finite element space made up of continuous piecewise linear functions over the triangulation $T_{\bullet}$
  \begin{eqnarray}\label{vt}
  \mathbb{V}_{T_{\bullet}}=\{v_{T_{\bullet}}\in C(\bar{\Omega}): v_{T_{\bullet}}| _T\in \mathbb{P}_1(T), \forall T\in T_{\bullet},\ v_{T_{\bullet}}=0\ \mbox{on}\ \partial\Omega\}.
    \end{eqnarray}
    \subsection{Mesh-refinement}
    In the context of regular triangulations denoted as $T_{\circ},\ T_{\bullet}$ over the domain $\Omega$, we introduce the notation $T_{\bullet}\in \rm{refine(T_{\circ})}$, signifying that $T_{\bullet}$ represents an arbitrary refinement of $T_{\circ}$, if
$$T=\cup\{T'\in T_{\bullet}:T'\subseteq T\},\ \mbox{for\ all}\ T\in T_{\circ}$$
and
$$|T'|\leq |T|/2, \mbox{for\ all }\ T\in T_{\circ}\backslash T_{\bullet}\ \mbox{and\ all}\ T'\in T_{\bullet}\ \mbox{with}\ T'\subseteq T.$$
Here $T_{\circ}\backslash T_{\bullet}$ designates the collection of refined elements, while $T_{\bullet}\backslash T_{\circ}$ comprises their corresponding successors.

The fundamental assumptions of this framework dictate that $\cup(T_{\circ}\backslash T_{\bullet}) =\cup(T_{\bullet}\backslash T_{\circ})$. Additionally, mesh parameter $h_{\bullet}$ satisfies
 \begin{align*}
h_{\bullet}\leq 2^{-1/d}h_{\circ},\ \mbox{if}\ \cup(T_{\circ}\backslash T_{\bullet}), \ \ h_{\bullet}=h_{\circ},\ \mbox{if}\  \cup(T_{\circ}\cap T_{\bullet}).
\end{align*}

\subsection{Auxiliary results}
This subsection recounts and states certain facts that were used in the a posteriori error analysis.

For a mesh $T_{\bullet}$, we introduce the weight function \cite{Fau} defined by
$$\omega_{T_{\bullet}}(x):=\inf\limits_{T\in T_{\bullet}}\inf\limits_{y\in\partial T}|x-y|.$$
We define the local mesh width function
 $$
  \widetilde{h}^{s}_{\bullet}=
 h^{s}_{\bullet},\ \mbox{if}\ s\in(0,\frac{1}{2}],\ \ \ \widetilde{h}^{s}_{\bullet}=
h^{s-\beta}_{\bullet}\omega_{T_{\bullet}}^{\beta},\ \mbox{if}\ s\in(\frac{1}{2},1),\ \beta:=s-{\frac{1}{2}}.
$$
 Let $\mathcal{N}_{\bullet}$ be the set of nodes of $T_{\bullet}$. For each $z\in \mathcal{N}_{\bullet}$,
choose an arbitrary element $T_z\in T_{\bullet}$ with $z\in T_z$. Let $\phi_z\in \mathbb{V}_{T_{\bullet}}$ denote the Lagrange
basis function associated with $z$, and let $\phi_z^{\bullet}\in P^{1}(T_z)$ be such that $\int_{T_z}\phi_z^{\bullet}\phi_{z'}dx=\delta_{zz'},$ for all $z'\in\mathcal{N}_{\bullet}$.
 Then, the Scott-Zhang operator (\cite{Fau}) $\Pi_{\bullet}$ defined by
$$\Pi_{\bullet}v=\sum\limits_{z\in\mathcal{N}_{\bullet}}(\int_{T_z}\phi_z^{\bullet}vdx)\phi_z$$
 satisfies the following properties.
\begin{Lemma}
 For all $0\leq s\leq t\leq1$, $\Pi_{\bullet}:L^2(\Omega)\rightarrow \mathbb{V}_{T_{\bullet}}$ is a well-defined linear projection, i.e.,
\begin{align}
\Pi_{\bullet}v_{\bullet}=v_{\bullet},\ \forall v_{\bullet}\in  \mathbb{V}_{T_{\bullet}}.\label{sz1}
   \end{align}
   Moreover, $\Pi_{\bullet}$ is stable in $\widetilde{H}^{s}(\Omega)$
     \begin{align}
\|\Pi_{\bullet}v\|_{\widetilde{H}^{s}(\Omega)}\leq C \|v\|_{\widetilde{H}^{s}(\Omega)}, \forall v\in \widetilde{H}^{s}(\Omega) \label{sz2}
   \end{align}
 and has a local first-order approximation property
  \begin{align}
\|\widetilde{h}^{-s}_{\bullet}(1-\Pi_{\bullet})v\|\leq C \|v\|_{\widetilde{H}^{s}(\Omega)}, \forall v\in \widetilde{H}^{s}(\Omega). \label{sz3}
   \end{align}
For all $v\in \widetilde{H}^{t}(\Omega)$ and $T\in T_{\bullet}$, let $\overline{\Omega_{\bullet}^k(T)}\cap \partial\Omega=\emptyset$, then it holds that
 \begin{align}\label{sz4}
\|(1-\Pi_{\bullet})v\|_{H^s(\Omega_{\bullet}^k(T))}\leq \left\{ \begin{aligned}
&C h_{\bullet}^{t-s}|v|_{H^t(\Omega_{\bullet}^{k+1}(T))},\ \mbox{if}\ t>1/2,\\
&C h_{\bullet}^{t-s}|v|_{H^t(\Omega_{\bullet}^{k+1}(T))}+\|dist(\cdot,\partial\Omega)^{-t}v\|_{L^2(\Omega_{\bullet}^{k+1}(T))},\ \mbox{if}\ t\leq1/2.
\end{aligned}\right.
   \end{align}
  % Here $\mathbb{C}_{sz}>0$ is a constant depending only on $\Omega,\ s$, and the $\gamma-$shape regularity of $T_{\bullet}$.
\end{Lemma}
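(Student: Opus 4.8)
The plan is to dispatch \eqref{sz1}--\eqref{sz4} in turn; these are the standard properties of the Scott--Zhang quasi-interpolant carried over to the zero-extension fractional setting, and the argument is an adaptation of the one in \cite{Fau}. The reproduction identity \eqref{sz1} is purely algebraic: for $v_\bullet\in\mathbb V_{T_\bullet}$ and a node $z$, the restriction $v_\bullet|_{T_z}\in\mathbb P_1(T_z)$ expands in the local Lagrange basis as $\sum_{z'\in\mathcal N_\bullet\cap\overline{T_z}}v_\bullet(z')\phi_{z'}|_{T_z}$, so the biorthogonality $\int_{T_z}\phi_z^{\bullet}\phi_{z'}\,dx=\delta_{zz'}$ gives $\int_{T_z}\phi_z^{\bullet} v_\bullet\,dx=v_\bullet(z)$ and hence $\Pi_\bullet v_\bullet=\sum_z v_\bullet(z)\phi_z=v_\bullet$. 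Linearity is immediate from the defining formula, and $\Pi_\bullet v\in\mathbb V_{T_\bullet}\subset\widetilde H^s(\Omega)$ by construction, since every $\phi_z$ vanishes on $\partial\Omega$.

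For the stability bound \eqref{sz2} I would first prove $L^2$-stability by scaling: the normalization $\int_{T_z}\phi_z^{\bullet}\phi_z\,dx=1$ together with shape regularity forces $\|\phi_z^{\bullet}\|_{L^2(T_z)}\lesssim h_{T_z}^{-d/2}$, while $\|\phi_z\|_{L^2(T)}\lesssim h_T^{d/2}$ and $h_T\simeq h_{T_z}$ whenever $T$ meets $T_z$; since each element belongs to boundedly many patches, summing over $T$ yields $\|\Pi_\bullet v\|_{L^2(\Omega)}\le C\|v\|_{L^2(\Omega)}$. Then $H^1$-stability follows from the local reproduction of affine functions together with an inverse estimate on the finite element image and a Poincar\'e estimate on each patch, and real interpolation of the scale $\widetilde H^s(\Omega)$, $s\in[0,1]$, between the $L^2$ and $H^1$ bounds gives \eqref{sz2}; the target is $\widetilde H^s$ rather than $H^s$ simply because $\Pi_\bullet v$ already lies in $\mathbb V_{T_\bullet}$.

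The approximation estimates come next. For \eqref{sz4} on an interior patch and $t>1/2$ I would use local $\mathbb P_1$-reproduction: write $(1-\Pi_\bullet)v=(1-\Pi_\bullet)(v-p)$ for $p\in\mathbb P_1$, apply the local form of \eqref{sz2}, and optimize over $p$ by the Deny--Lions lemma to obtain $C h_\bullet^{t-s}|v|_{H^t(\Omega_\bullet^{k+1}(T))}$; for $t\le1/2$ the Deny--Lions constant blows up toward $\partial\Omega$, which is precisely why the weighted remainder $\|\mathrm{dist}(\cdot,\partial\Omega)^{-t}v\|_{L^2}$ must be kept. Then \eqref{sz3} is obtained by applying \eqref{sz4} elementwise with $k=0$, $t=s$ and summing with finite overlap: away from $\partial\Omega$ this gives $\|h_\bullet^{-s}(1-\Pi_\bullet)v\|_{L^2}\le C|v|_{H^s(\Omega)}$; in the boundary layer, for $s\in(0,\frac{1}{2}]$ the accumulated remainders sum to $\|\mathrm{dist}(\cdot,\partial\Omega)^{-s}v\|_{L^2(\Omega)}$, which is bounded by $C\|v\|_{\widetilde H^s(\Omega)}$ via a Hardy-type inequality, and for $s\in(\frac{1}{2},1)$ the modified width $\widetilde h_\bullet^s=h_\bullet^{s-\beta}\omega_{T_\bullet}^\beta$, $\beta=s-\frac{1}{2}$, is tuned so that $\|\widetilde h_\bullet^{-s}(1-\Pi_\bullet)v\|_{L^2}\le C\big(|v|_{H^s(\Omega)}+\|\mathrm{dist}(\cdot,\partial\Omega)^{-\beta}v\|_{L^2(\Omega)}\big)$, again absorbed by $\|v\|_{\widetilde H^s(\Omega)}$ through Hardy.

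The main obstacle is this boundary-layer analysis for $s\in(\frac{1}{2},1)$. On an element touching $\partial\Omega$ the naive Bramble--Hilbert argument fails, because a nonzero local affine approximant has no zero extension in $H^s(\mathbb R^d)$; instead one must use that the global zero extension of $v$ lies in $H^s(\mathbb R^d)$ and pay for the boundary behaviour through the weight $\omega_{T_\bullet}^\beta$, matching it against the sharp Hardy inequality on $\widetilde H^s(\Omega)$; this also explains why $s=\frac{1}{2}$ is excluded from that case. The remaining steps are routine scaling and summation arguments.
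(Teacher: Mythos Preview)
The paper does not prove this lemma at all: it is stated as a known result, attributed to \cite{Fau} (Faustmann--Melenk--Praetorius), and no proof environment appears. Your sketch is a faithful outline of the argument in that reference---biorthogonality for \eqref{sz1}, $L^2$/$H^1$-stability plus real interpolation for \eqref{sz2}, local polynomial reproduction and Deny--Lions for \eqref{sz4}, and the Hardy-inequality mechanism with the modified width $\widetilde h_\bullet^s$ for \eqref{sz3} in the boundary layer---so there is nothing to compare against in the present paper beyond noting that your approach is the intended one from the cited source.
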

The following Lemma establishes the required inverse estimate for the fractional Laplacian.
\begin{Lemma}
 Let $0<s\leq1/2$ with $s\neq1/4$, and for all $v\in\widetilde{H}^{2s}(\Omega)$, it holds that
  \begin{align}\label{inv1}
\|h^s_{\bullet}(-\Delta )^sv\|_{L^2(\Omega)}\leq C\left(\|v\|^2_{\widetilde{H}^{s}(\Omega)}+\sum\limits_{T\in T_{\bullet}}h_{\bullet}^{2s}\|v\|_{H^{2s}(\Omega^2_{\bullet}(T))}\right)^{1/2}.
   \end{align}
  Let $\overline{\Omega^1_{\bullet}(T)}\cap \partial\Omega=\emptyset$. If the right-hand side is augmented by $C B(v) $, where $B (v) $ is given by
  $$B^2(v)=\sum\limits_{T\in T_{\bullet}}h_{\bullet}^{1/2}\|dist(\cdot,\partial\Omega)^{-1/2}v\|^2_{L^{2}(\Omega^2_{\bullet}(T))},$$
  then  (\ref {inv1}) holds for $s =1/4$.
   In particular, for $0<s<1$ and all $v_{\bullet}\in \mathbb{V}_{T_{\bullet}}$,  it holds that
    \begin{align}\label{inv2}
\|\widetilde{h}^s_{\bullet}(-\Delta )^sv_{\bullet}\|_{L^2(\Omega)}\leq C\|v_{\bullet}\|^2_{\widetilde{H}^{s}(\Omega)}.
   \end{align}
\end{Lemma}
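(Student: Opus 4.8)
The plan is to establish the inverse estimate \eqref{inv1} by localizing the fractional Laplacian and exploiting its mapping properties, then deduce \eqref{inv2} by specializing to discrete functions. First I would recall the standard inverse-type bound for the fractional Laplacian acting on a single simplex: for $w$ supported in a patch, $(-\Delta)^s w$ is controlled on $T$ by a scaled $\widetilde{H}^s$-norm of $w$ plus a tail term measuring the nonlocal interaction of $w$ with the complement of the patch. Concretely, writing $(-\Delta)^s v = (-\Delta)^s(\chi_{\Omega_\bullet^1(T)} v) + (-\Delta)^s(\chi_{\Omega\setminus\Omega_\bullet^1(T)} v)$ on each $T$, the first piece is handled by the (scaled) boundedness $(-\Delta)^s:\widetilde H^{s}\to H^{-s}$ together with a Bernstein/inverse inequality upgrading $H^{-s}$-control to $L^2$-control at the cost of a factor $h_\bullet^{-s}$ — this is exactly where the hypothesis $0<s\le 1/2$, $s\ne 1/4$ enters, since the relevant trace/interpolation space $\widetilde H^{2s}$ behaves well only away from the borderline exponent $2s=1/2$. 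The second, genuinely nonlocal piece is estimated directly from the singular integral kernel: for $x\in T$ and $w\notin \Omega_\bullet^1(T)$ one has $|x-w|\gtrsim h_\bullet$, so the kernel $|x-w|^{-d-2s}$ is integrable and yields a bound in terms of $\|v\|_{L^2}$ (hence $\|v\|_{\widetilde H^s}$) with the right power of $h_\bullet$.

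Next I would sum the local estimates over $T\in T_\bullet$. Because the patches $\Omega_\bullet^2(T)$ have bounded overlap (by $\gamma$-shape regularity), the sum of the local $H^{2s}(\Omega_\bullet^2(T))$-contributions is comparable to $\sum_T h_\bullet^{2s}\|v\|_{H^{2s}(\Omega_\bullet^2(T))}^2$, while the global $\widetilde H^s$-term is absorbed once; taking square roots gives \eqref{inv1}. For the endpoint $s=1/4$ the interpolation argument loses the clean trace bound, and the defect is precisely the weighted boundary term $B(v)$ involving $\mathrm{dist}(\cdot,\partial\Omega)^{-1/2}$; adding $C\,B(v)$ to the right-hand side compensates for the failure of $\widetilde H^{1/2}=H^{1/2}$ and restores the estimate — this is the standard Hardy-inequality correction near $\partial\Omega$, valid under the hypothesis $\overline{\Omega^1_\bullet(T)}\cap\partial\Omega=\emptyset$ which keeps the offending elements away from the boundary.

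Finally, \eqref{inv2} follows by taking $v=v_\bullet\in\mathbb V_{T_\bullet}$ in \eqref{inv1} (for $s\le 1/2$, $s\ne 1/4,1/2$) and observing that a piecewise linear function satisfies $\|v_\bullet\|_{H^{2s}(\Omega_\bullet^2(T))}\le C h_\bullet^{-s}\|v_\bullet\|_{H^s(\Omega_\bullet^2(T))}$ by a standard local inverse inequality on the reference element (again using shape regularity and bounded overlap), so that $\sum_T h_\bullet^{2s}\|v_\bullet\|_{H^{2s}(\Omega_\bullet^2(T))}^2\le C\|v_\bullet\|_{\widetilde H^s(\Omega)}^2$ and the $B(v_\bullet)$ term, when $s=1/4$, is likewise absorbed since $v_\bullet$ vanishes on $\partial\Omega$ and $\widetilde h^{s}_\bullet$ incorporates the boundary weight $\omega_{T_\bullet}^\beta$ for $s>1/2$; for the remaining range $1/2<s<1$ one uses the definition of $\widetilde h^s_\bullet$ and the weighted scaling to reduce to the same computation. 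The main obstacle is the localization of the nonlocal operator: one must carefully split the singular integral into a near-field part (requiring the scaled mapping property of $(-\Delta)^s$ and the inverse inequality, with the exponent restrictions) and a far-field part (requiring the bounded-overlap and shape-regularity bookkeeping to control the kernel tails uniformly), and then track the endpoint correction at $s=1/4$ through a Hardy-type inequality near $\partial\Omega$.
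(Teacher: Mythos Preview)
The paper does not supply a proof of this lemma at all: it is stated as a known result (the surrounding material on the weight $\omega_{T_\bullet}$, the Scott--Zhang operator, and this inverse estimate is all attributed to \cite{Fau}), and the text moves directly from the statement to Section~3. So there is no ``paper's own proof'' to compare against.

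Your sketch is a plausible outline of how the result is proved in the cited reference: the near-field/far-field splitting of the singular integral, the use of the mapping property $(-\Delta)^s:\widetilde H^{2s}\to L^2$ on the localized piece, kernel decay for the far-field, bounded overlap to sum, and the Hardy-type correction at the borderline $2s=1/2$ are indeed the ingredients used by Faustmann--Melenk--Praetorius. One point to tighten: your derivation of \eqref{inv2} for $1/2<s<1$ is only gestured at (``uses the definition of $\widetilde h^s_\bullet$ and the weighted scaling to reduce to the same computation''); in that range \eqref{inv1} is not available, so the argument in \cite{Fau} proceeds differently, exploiting that $v_\bullet$ is piecewise linear and that the weight $\omega_{T_\bullet}^\beta$ in $\widetilde h^s_\bullet$ vanishes on the skeleton, which is what makes $\widetilde h^s_\bullet(-\Delta)^s v_\bullet\in L^2$ in the first place. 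If you want a self-contained proof you should spell out that case separately rather than claim it reduces to \eqref{inv1}.
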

\section{The state equation}
In this section, we introduce the weak formula of (\ref{state}) and briefly review the regularity of the relevant solutions and the a posterior error estimate.

 The weak formulation of state equation (\ref{state}) reads: Find $y\in \widetilde{H}^{s}(\Omega)$ such that
\begin{eqnarray}\label{weak_state_eq1}
a(y,v)+(uy,v)=(f,v), \ \ \forall v\in \widetilde{H}^{s}(\Omega).
   \end{eqnarray}
   Here $$a(y,v)=\frac{C(d,s) }{2}\int\int_{R^d\times R^d} \frac{(y(x)-y(w))(v(x)-v(w))}{|x-w|^{d+2s}}dxdw.$$
We designate $\|\cdot\|_{\widetilde{H}^s(\Omega)}$ as the norm induced by the bilinear form $a(\cdot,\cdot)$, which is just a multiple of the $H^s(R^d)$ seminorm:
$$
\|y\|_{\widetilde{H}^{s}(\Omega)}:=\sqrt{a(y,y)}=\sqrt{\frac{C(d,s) }{2}}|y|_{H^s(R^d)}.
$$
As the $H^{s}(R^d)$ seminorm is equivalent to the $H^{s}(R^d)$ norm on $ \widetilde{H}^{s}(\Omega)$ (see \cite{Acosta2}), the Lax-Milgram lemma implies that equation (\ref{weak_state_eq1}) is well-posed. We have the following stability estimate in particular.
$$\|y\|_{\widetilde{H}^{s}(\Omega)}\leq C\|f\|_{{H}^{-s}(\Omega)}.$$
 \subsection{Regularity estimate}
To obtain a posteriori error estimate for appropriate finite element discretizations of problem (\ref{weak_state_eq1}), it is crucial to have a profound understanding of the regularity characteristics of the solution to (\ref{weak_state_eq1}).  We now present regularity results on the state equation, which are crucial to obtain regularity estimates for the optimal control variable.
\begin{Lemma}(\cite{bor})\label{regy}
Let $\Omega$ be a bound Lipchitz domain and $s\in(0,1)$, if $f(x)\in L^2(\Omega)$ and $u\in U_{ad}$, there exists a solution $y$ of (\ref{weak_state_eq1}) belongs to ${H}^{s+\sigma-\epsilon}(\Omega)$, where  $\sigma=\min\{s,\frac{1}{2}\}$ and $0<\epsilon <s$. In addition, it holds that
\begin{eqnarray*}
\|y\|_{{H}^{s+\sigma-\epsilon}(\Omega)}\leq C\epsilon^{-\tau}\|f\|_{L^2(\Omega)}.
\end{eqnarray*}
 Here $\tau=s$ for $\frac{1}{2}<s<1$ and $\tau=s+\zeta$ for $0<s\leq\frac{1}{2}$ as well as a constant $\zeta$ depending on $\Omega$ and $d$.
\end{Lemma}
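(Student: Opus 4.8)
The plan is to reduce the bilinear equation \eqref{weak_state_eq1} to a pure homogeneous Dirichlet problem for $(-\Delta)^s$ and then quote the sharp regularity theory for that problem. First, for a fixed $u\in U_{ad}$ the bilinear form $(y,v)\mapsto a(y,v)+(uy,v)$ is bounded on $\widetilde H^s(\Omega)\times\widetilde H^s(\Omega)$ and coercive, since $u\geq a>0$ gives $a(y,y)+(uy,y)\geq a(y,y)=\|y\|_{\widetilde H^s(\Omega)}^2$; hence the Lax--Milgram lemma yields a (in fact the unique) solution $y\in\widetilde H^s(\Omega)$, together with the stability bound $\|y\|_{\widetilde H^s(\Omega)}\leq C\|f\|_{H^{-s}(\Omega)}\leq C\|f\|_{L^2(\Omega)}$ recorded above, where the last inequality uses $L^2(\Omega)\hookrightarrow H^{-s}(\Omega)$.

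Next I put $g:=f-uy$. Since $f\in L^2(\Omega)$, $u\in L^\infty(\Omega)$ with $\|u\|_{L^\infty(\Omega)}\leq b$, and $\widetilde H^s(\Omega)\hookrightarrow L^2(\Omega)$, we get $g\in L^2(\Omega)$ with
$$\|g\|_{L^2(\Omega)}\leq\|f\|_{L^2(\Omega)}+b\,\|y\|_{L^2(\Omega)}\leq\|f\|_{L^2(\Omega)}+Cb\,\|f\|_{L^2(\Omega)}=C\|f\|_{L^2(\Omega)}.$$
By construction $y$ is then the weak solution of $(-\Delta)^s y=g$ in $\Omega$, $y=0$ in $\Omega^c$, with right-hand side $g\in L^2(\Omega)$. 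Invoking the regularity estimate for this problem from \cite{bor} — which provides $y\in H^{s+\sigma-\epsilon}(\Omega)$ with $\sigma=\min\{s,1/2\}$, $0<\epsilon<s$, and $\|y\|_{H^{s+\sigma-\epsilon}(\Omega)}\leq C\epsilon^{-\tau}\|g\|_{L^2(\Omega)}$, where $\tau=s$ for $1/2<s<1$ and $\tau=s+\zeta$ for $0<s\leq1/2$ — and combining it with the bound on $\|g\|_{L^2(\Omega)}$ gives exactly the claimed estimate.

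The essential point, and the reason the statement cannot be sharpened by iteration, is that $u$ carries no Sobolev regularity beyond $L^\infty$: the product $uy$ of an $L^\infty$ function with an $H^{s+\sigma-\epsilon}$ function is in general no smoother than $L^2$, so the improved regularity of $y$ cannot be fed back into $g$ to bootstrap. The argument is therefore genuinely one-shot, and all the technical weight rests in the cited fractional-Laplacian regularity result, including the explicit blow-up $\epsilon^{-\tau}$ as $\epsilon\to0$, which reflects the loss incurred in approaching the optimal Besov exponent $s+\sigma$; the only thing to verify here is the elementary reduction and the fact that the extra zeroth-order term $uy$ preserves the $L^2$ data class with a constant depending only on $b$.
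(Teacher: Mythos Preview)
The paper does not supply a proof of this lemma; it simply cites \cite{bor} and states the result. Your reduction argument---absorbing the zeroth-order term $uy$ into the right-hand side, bounding $g=f-uy$ in $L^2(\Omega)$ via the $\widetilde H^s$ a~priori estimate and $u\in L^\infty$, and then invoking the regularity theory of \cite{bor} for the pure problem $(-\Delta)^s y=g$---is exactly the standard way to pass from the cited result to the bilinear equation \eqref{weak_state_eq1}, and it is correct.
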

By assuming a higher integrability condition on $f$, it is possible to derive an $L^{\infty}(\Omega)$-regularity result for the solution $y$ of problem  (\ref{state}).
\begin{Lemma}(\cite{ota})\label{state_regularity2}
Let $s\in(0,1),$\ $r>d/2s$, assume that $f(x)\in L^r(\Omega)$, there exists a solution $y\in  \widetilde{H}^{s}(\Omega)\cap L^{\infty}(\Omega)$ satisfies
\begin{eqnarray*}
\|y\|_{\widetilde{H}^{s}(\Omega)}+\|y\|_{L^{\infty}(\Omega)}\leq C\|f\|_{L^r(\Omega)}.
\end{eqnarray*}
\end{Lemma}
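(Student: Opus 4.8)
The plan is to establish the $L^\infty$-bound via a standard bootstrap/Moser-type argument adapted to the nonlocal setting, exploiting the sign condition $u \geq a > 0$ on the zeroth-order coefficient. First I would test the weak formulation \eqref{weak_state_eq1} against truncated powers of $y$. Specifically, for $\beta \geq 1$ set $v = |y|^{\beta-1}y$ (suitably truncated to $v_M := |y_M|^{\beta-1}y_M$ with $y_M := \max(-M,\min(y,M))$ to stay in $\widetilde H^s(\Omega) \cap L^\infty$), and use the algebraic inequality that for such convex power nonlinearities one has $a(y, v_M) \geq c_\beta \, a(w_M, w_M)$ where $w_M \sim |y_M|^{(\beta-1)/2} y_M$; this is the nonlocal analogue of the Stampacchia/Moser chain rule and follows from the pointwise estimate $(t-\tau)(|t|^{\beta-1}t - |\tau|^{\beta-1}\tau) \geq \frac{4\beta}{(\beta+1)^2}\bigl(|t|^{(\beta-1)/2}t - |\tau|^{(\beta-1)/2}\tau\bigr)^2$. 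Since $(uy, v_M) \geq 0$ by the sign of $u$ and of $y\,v_M$, the zeroth-order term only helps, so $\|w_M\|_{\widetilde H^s(\Omega)}^2 \leq C \beta \,(f, v_M)$.

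Next I would invoke the fractional Sobolev embedding $\widetilde H^s(\Omega) \hookrightarrow L^{2^*}(\Omega)$ with $2^* = 2d/(d-2s)$ (valid since $2s < d$ for $d = 2,3$), which upgrades control of $\|w_M\|_{\widetilde H^s}$ to control of $\|y_M\|_{L^{(\beta+1)d/(d-2s)}}$ in terms of $\|y_M\|_{L^{\beta+1}}$ and $\|f\|_{L^r}$; here one uses Hölder on the right-hand side $(f,v_M)$ with the exponent $r > d/2s$ precisely so that the dual exponent pairs correctly against $|y_M|^\beta \in L^{(\beta+1)/\beta \cdot \text{(something)}}$. Letting $M \to \infty$ by monotone convergence removes the truncation. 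This produces a reverse-Hölder/self-improving inequality of the form $\|y\|_{L^{\chi q}} \leq (C\beta)^{1/\beta}\bigl(\|y\|_{L^q} + \|f\|_{L^r}\bigr)$ with fixed gain factor $\chi = d/(d-2s) > 1$. Iterating over $q_k = \chi^k q_0$ and checking that $\prod_k (C\beta_k)^{1/\beta_k}$ converges (the exponents $\beta_k \sim \chi^k$ grow geometrically, so the product is finite) yields the Moser iteration conclusion $\|y\|_{L^\infty(\Omega)} \leq C\bigl(\|y\|_{L^{q_0}} + \|f\|_{L^r}\bigr)$, and the starting $L^{q_0}$-norm is absorbed using the already-established energy estimate $\|y\|_{\widetilde H^s(\Omega)} \leq C\|f\|_{H^{-s}(\Omega)} \leq C\|f\|_{L^r(\Omega)}$ together with the embedding.

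Alternatively — and this is likely the cleaner route to cite — one can avoid re-deriving the elliptic regularity by representing $y$ via the solution operator: writing the equation as $(-\Delta)^s y = f - uy =: g$ with $g \in L^2(\Omega)$ a priori, so that $y = (-\Delta)^{-s}_\Omega g$, and then invoking known $L^p$–$L^q$ mapping properties of the inverse restricted fractional Laplacian (the fractional Green operator on Lipschitz domains) to bootstrap the integrability of $y$, hence of $uy$, hence of $g$, until $g \in L^r$ forces $y \in L^\infty$. The existence of $y \in \widetilde H^s(\Omega)$ is already guaranteed by Lax–Milgram as noted after \eqref{weak_state_eq1}; only the extra regularity requires argument.

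The main obstacle I anticipate is the nonlocal chain-rule / convexity inequality for $a(\cdot,\cdot)$ with power test functions: unlike the local case where one integrates by parts and uses the classical chain rule, here one must verify the pointwise two-variable inequality above and track the constant's dependence on $\beta$ carefully, since a suboptimal $\beta$-dependence would break the convergence of the Moser product. A secondary technical point is that the truncated test functions $v_M$ must genuinely lie in $\widetilde H^s(\Omega)$ — composition with Lipschitz, piecewise-smooth functions vanishing at $0$ does preserve $\widetilde H^s$ for $s \in (0,1)$, but this should be stated. Boundary behavior near $\partial\Omega$ (where $y$ may vanish only like $\mathrm{dist}^s$) causes no trouble for an $L^\infty$ bound since we only need an upper bound on $|y|$.
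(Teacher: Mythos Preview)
The paper does not prove this lemma at all: it is stated with an attribution to \cite{ota} and no argument is given. So there is no in-paper proof to compare against; the authors simply import the $L^\infty$-regularity as a black box from Ot\'arola's work on fractional semilinear optimal control.

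Your Moser-iteration sketch is a correct and standard route to such a bound in the nonlocal setting. The pointwise two-variable inequality you quote is exactly the substitute for the chain rule in the fractional case, the sign condition $u\ge a>0$ indeed lets you drop the zeroth-order term, and the threshold $r>d/2s$ is precisely what makes the H\"older step close so that each iteration gains a fixed factor $\chi=d/(d-2s)>1$ in integrability. The alternative bootstrap via the Green operator $(-\Delta)^{-s}_\Omega$ is also viable and is closer in spirit to how such results are often cited. Either way, what you have written is substantially more than the paper itself supplies for this statement; if your goal is only to match the paper, a one-line citation suffices.

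One small caveat on your write-up: for $d=2$ and $s\in(0,1)$ you assert $2s<d$, but $2s$ can equal or approach $d$ when $d=2$ and $s\to 1$; you should either restrict to $2s<d$ or note that for $2s\ge d$ the embedding $\widetilde H^s(\Omega)\hookrightarrow L^q(\Omega)$ for all $q<\infty$ (or directly into $L^\infty$ when $2s>d$) makes the iteration unnecessary or trivially convergent.
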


\subsection{An a posteriori error estimate for the state equation}
According to the definition of (\ref{vt}), we introduce an approximation of the solution to (\ref{weak_state_eq1})
 \begin{eqnarray}\label{weak_state2}
a(y_{T_{\bullet}}(u),v_{T_{\bullet}})+(uy_{T_{\bullet}}(u),v_{T_{\bullet}})=(f,v_{T_{\bullet}}), \ \ \forall v_{T_{\bullet}}\in \mathbb{V}_{T_{\bullet}}.
   \end{eqnarray}
    The existence and uniqueness of a solution $ y_{T_{\bullet}}(u)\in \mathbb{V}_{T_{\bullet}}$ follows from the Lax-Milgram lemma.

We further introduce the following weighted residual error as follows
 \begin{eqnarray*}
\mathcal{\eta}^2_{\mathcal{T}}(y_{T_{\bullet}}(u)):=\|\widetilde{h}^{s}_{\bullet}(f-uy_{T_{\bullet}}(u)-(-\Delta )^sy_{T_{\bullet}}(u))\|^2_{L^2(\Omega)}.
  \end{eqnarray*}
Since the $L^2$-norm is local, the error estimator can be expressed as the sum of local contributions.
$$\mathcal{\eta}^2_{\mathcal{T}}(y_{T_{\bullet}}(u)):=\sum\limits_{T\in T_{\bullet}}\mathcal{\eta}^2_{\mathcal{T}}(T,y_{T_{\bullet}}(u)),\ \mathcal{\eta}^2_{\mathcal{T}}(T,y_{T_{\bullet}}(u)):=\|\widetilde{h}^{s}_{\bullet}(f-uy_{T_{\bullet}}(u)-(-\Delta )^sy_{T_{\bullet}}(u))\|^2_{L^2(T)}.$$
A posteriori error estimator normally provided upper or lower bounds for the error:
\begin{Lemma}\label{errorstate}
Let $y\in \widetilde{H}^{s}(\Omega)$ be the solution to (\ref{weak_state_eq1}) and $ y_{T_{\bullet}}(u)\in \mathbb{V}_{T_{\bullet}}$ be its
finite element approximation obtained in (\ref{weak_state2}). Then we have
\begin{eqnarray*}
\|y-y_{T_{\bullet}}(u)\|_{\widetilde{H}^{s}(\Omega)}\leq  C_{\mathrm{yrel}} \mathcal{\eta}_{\mathcal{T}}(y_{T_{\bullet}}(u)).
 \end{eqnarray*}
 Moreover,  for $0<s\leq 1/2$ and $y\in H^{s+1/2-\epsilon}(\Omega)\cap\widetilde{H}^{s}(\Omega), 0\leq \epsilon< \min\{s, 1/2-s\}$, the lower bound of the finite element approximation error is as follows:
  \begin{eqnarray*}
\mathcal{\eta}^2_{\mathcal{T}}(y_{T_{\bullet}}(u))\leq  { C}_{\mathrm{yeff}}  \Big(\|y-y_{T_{\bullet}}(u)\|_{\widetilde{H}^{s}(\Omega)}^2+\sum\limits_{T\in T_{\bullet}}h_{\bullet}^{1-2\epsilon}\|y-y_{T_{\bullet}}(u)\|_{H^{s+1/2-\epsilon}(\Omega^3_{\bullet}(T))}^2\|\Big).
 \end{eqnarray*}
\end{Lemma}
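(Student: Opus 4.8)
The plan is to prove the two bounds separately, following the standard residual-based a posteriori framework but adapted to the nonlocal operator via the weighted residual of \cite{Fau}.

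For the reliability bound, I would start from the Galerkin orthogonality $a(y-y_{T_{\bullet}}(u),v_{T_{\bullet}})+(u(y-y_{T_{\bullet}}(u)),v_{T_{\bullet}})=0$ for all $v_{T_{\bullet}}\in\mathbb{V}_{T_{\bullet}}$. Set $e:=y-y_{T_{\bullet}}(u)$. Since the bilinear form $a(\cdot,\cdot)+(u\cdot,\cdot)$ is coercive on $\widetilde H^s(\Omega)$ (note $u\geq a>0$, so the zeroth-order term only helps), I have $\|e\|_{\widetilde H^s(\Omega)}^2\leq C\big(a(e,e)+(ue,e)\big)$. Using the weak formulation and Galerkin orthogonality with $v_{T_{\bullet}}=\Pi_{\bullet}e$, this equals $C\langle f-uy_{T_{\bullet}}(u)-(-\Delta)^s y_{T_{\bullet}}(u),\,e-\Pi_{\bullet}e\rangle$, where I interpret $(-\Delta)^s y_{T_{\bullet}}(u)$ via $a(y_{T_{\bullet}}(u),\cdot)$. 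Then I would split the duality pairing into local contributions, apply Cauchy–Schwarz elementwise, and insert the weight: $\langle R,\,e-\Pi_{\bullet}e\rangle\leq \|\widetilde h^s_{\bullet}R\|_{L^2(\Omega)}\,\|\widetilde h^{-s}_{\bullet}(1-\Pi_{\bullet})e\|_{L^2(\Omega)}$. The second factor is controlled by $C\|e\|_{\widetilde H^s(\Omega)}$ by the Scott–Zhang approximation property \eqref{sz3}. Dividing by $\|e\|_{\widetilde H^s(\Omega)}$ yields the claim with $C_{\mathrm{yrel}}$. The one subtlety here is making sense of $(-\Delta)^s y_{T_{\bullet}}(u)$ pointwise in $L^2$: for $v_{T_{\bullet}}\in\mathbb{V}_{T_{\bullet}}$ this is legitimate precisely because of the inverse estimate \eqref{inv2}, which guarantees $\widetilde h^s_{\bullet}(-\Delta)^s y_{T_{\bullet}}(u)\in L^2(\Omega)$, so that $\eta_{\mathcal T}$ is finite.

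For the efficiency (lower) bound, I would work locally. Fix $T\in T_{\bullet}$ and let $R_T:=f-uy_{T_{\bullet}}(u)-(-\Delta)^s y_{T_{\bullet}}(u)$ restricted to $T$; I want to bound $\|\widetilde h^s_{\bullet}R_T\|_{L^2(T)}$. Since $f-uy_{T_{\bullet}}(u)-(-\Delta)^s y_{T_{\bullet}}(u) = uy - u y_{T_{\bullet}}(u) + (-\Delta)^s(y-y_{T_{\bullet}}(u)) = u e + (-\Delta)^s e$ using the strong form of the state equation, I would write $\widetilde h^s_{\bullet}R_T = \widetilde h^s_{\bullet} u e + \widetilde h^s_{\bullet}(-\Delta)^s e$. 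The first term is immediately $\leq C\|e\|_{L^2(T)}\leq C\|e\|_{\widetilde H^s(\Omega)}$ (after summing). For the second term, summing over $T$ and applying the inverse estimate \eqref{inv1} (valid for $0<s\leq 1/2$, with the stated patch $\Omega^2_{\bullet}(T)$, $\Omega^3_{\bullet}(T)$ appearing once one keeps track of which patch \eqref{inv1} uses after applying it to $e$ whose regularity is $H^{s+1/2-\epsilon}$ by Lemma \ref{regy}) gives $\sum_T \|\widetilde h^s_{\bullet}(-\Delta)^s e\|_{L^2(T)}^2\leq C\big(\|e\|_{\widetilde H^s(\Omega)}^2+\sum_T h_{\bullet}^{1-2\epsilon}\|e\|_{H^{s+1/2-\epsilon}(\Omega^3_{\bullet}(T))}^2\big)$. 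The restriction $\epsilon<\min\{s,1/2-s\}$ is exactly what is needed so that $2s+1/2-\epsilon<1$ keeps $e$ in the right range and $h_{\bullet}^{2s}\|e\|_{H^{2s}}$ can be absorbed into the higher-order $H^{s+1/2-\epsilon}$ term via $2s\leq s+1/2-\epsilon$; I would also need $s\neq1/4$ to avoid the logarithmic obstruction in Lemma~2 (or invoke the $B(v)$-augmented version and note that $\|dist(\cdot,\partial\Omega)^{-1/2}e\|$ is dominated by $\|e\|_{\widetilde H^{1/4}}$-type terms using a Hardy inequality, hence absorbed).

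The main obstacle I anticipate is the bookkeeping of patches and weights in the efficiency estimate: the weight $\widetilde h^s_{\bullet}$ degenerates near $\partial\Omega$ when $s>1/2$, but here we are in the regime $s\leq 1/2$ so $\widetilde h^s_{\bullet}=h^s_{\bullet}$ and that difficulty disappears; instead the delicate point is invoking Lemma~2's inverse estimate with the correct hypothesis $\overline{\Omega^1_{\bullet}(T)}\cap\partial\Omega=\emptyset$ and, for elements touching the boundary, controlling the extra boundary-distance term — which is why the statement restricts to interior patches implicitly through the $\Omega^3_{\bullet}(T)$ notation and why the regularity Lemma \ref{regy} (with its $\epsilon$-loss) is needed to certify $e\in H^{s+1/2-\epsilon}$ in the first place. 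Everything else is a routine combination of Cauchy–Schwarz, the Scott–Zhang bounds \eqref{sz1}–\eqref{sz3}, and finite overlap of the patches $\{\Omega^k_{\bullet}(T)\}_{T}$.
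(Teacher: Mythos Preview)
Your reliability argument is correct and matches the paper's proof exactly: Galerkin orthogonality, insert $\Pi_\bullet e$, Cauchy--Schwarz with the weight $\widetilde h^{\pm s}_\bullet$, and invoke \eqref{sz3}.

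The efficiency argument, however, has a genuine gap. You propose to apply the inverse estimate \eqref{inv1} \emph{directly} to $e=y-y_{T_\bullet}(u)$, which yields
\[
\|h^s_\bullet(-\Delta)^s e\|_{L^2(\Omega)}^2 \le C\Big(\|e\|_{\widetilde H^s(\Omega)}^2+\sum_{T}h_\bullet^{2s}\|e\|_{H^{2s}(\Omega^2_\bullet(T))}^2\Big),
\]
and then claim that the term $h_\bullet^{2s}\|e\|_{H^{2s}}^2$ can be ``absorbed'' into $h_\bullet^{1-2\epsilon}\|e\|_{H^{s+1/2-\epsilon}}^2$ because $2s\le s+1/2-\epsilon$. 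This fails: while the norm inequality $\|e\|_{H^{2s}}\le\|e\|_{H^{s+1/2-\epsilon}}$ holds, the mesh weight goes the wrong way, since $2s<1-2\epsilon$ implies $h_\bullet^{2s}\ge h_\bullet^{1-2\epsilon}$ for $h_\bullet\le 1$. So you would obtain a different (weaker) bound, not the one stated in the lemma. Your remark that ``$\Omega^3_\bullet(T)$ appears once one keeps track'' is also unexplained: \eqref{inv1} alone only produces $\Omega^2_\bullet(T)$.

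The missing idea is the Scott--Zhang splitting $e=\Pi_\bullet e+(1-\Pi_\bullet)e$ used in the paper. One applies \eqref{inv2} to the discrete part $\Pi_\bullet e\in\mathbb V_{T_\bullet}$, giving a clean $\|e\|_{\widetilde H^s}$ contribution via \eqref{sz2}. To the remainder $(1-\Pi_\bullet)e$ one applies \eqref{inv1} and then the local approximation property \eqref{sz4} with $t=s+1/2-\epsilon>1/2$ and the role of $s$ taken by $2s$, which supplies the extra factor $h_\bullet^{(s+1/2-\epsilon)-2s}=h_\bullet^{1/2-s-\epsilon}$ so that $h_\bullet^{2s}\cdot h_\bullet^{2(1/2-s-\epsilon)}=h_\bullet^{1-2\epsilon}$; this step also increments the patch from $\Omega^2_\bullet(T)$ to $\Omega^3_\bullet(T)$. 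Without this splitting there is no mechanism to gain the required power of $h_\bullet$.
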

\begin{proof}
To prove the upper bound of a posteriori error, we first observe that $y-y_{T_{\bullet}}(u)\in \widetilde{H}^{s}(\Omega)$ solves
\begin{eqnarray*}
a(y-y_{T_{\bullet}}(u),v)+(u(y-y_{T_{\bullet}}(u)),v)=\langle f-uy_{T_{\bullet}}(u)-(-\Delta )^sy_{T_{\bullet}}(u),v\rangle.
 \end{eqnarray*}
We invoke Galerkin orthogonality to arrive at
\begin{eqnarray*}
a(y-y_{T_{\bullet}}(u),v)+(u(y-y_{T_{\bullet}}(u)),v)=\langle f-uy_{T_{\bullet}}(u)-(-\Delta )^sy_{T_{\bullet}}(u),v-\Pi_{\bullet}v\rangle.
 \end{eqnarray*}
 Setting $v=y-y_{T_{\bullet}}(u)$, we utilize the Cauchy-Schwarz inequality and (\ref{sz3}) to obtain
\begin{align*}
\|y-y_{T_{\bullet}}(u)\|^2_{\widetilde{H}^{s}(\Omega)}&\leq\|\widetilde{h}^{s}_{\bullet}(f-uy_{T_{\bullet}}(u)-(-\Delta )^sy_{T_{\bullet}}(u))\|_{L^2(\Omega)}\|\widetilde{h}^{-s}_{\bullet}(1-\Pi_{\bullet})(y-y_{T_{\bullet}}(u))\|_{L^2(\Omega)}\\
&\leq \mathcal{\eta}_{\mathcal{T}}(y_{T_{\bullet}}(u))\|y-y_{T_{\bullet}}(u)\|_{\widetilde{H}^{s}(\Omega)}.
 \end{align*}
 Consequently,
 \begin{eqnarray*}
\|y-y_{T_{\bullet}}(u)\|_{\widetilde{H}^{s}(\Omega)}\leq  C \mathcal{\eta}_{\mathcal{T}}(y_{T_{\bullet}}(u)).
 \end{eqnarray*}
We use the inverse estimates (\ref{inv1})-(\ref{inv2}) to demonstrate the weak efficiency. Noting that for $0<s\leq 1/2$ with $s\neq1/4$ we have
 \begin{align*}
 \mathcal{\eta}^2_{\mathcal{T}}(y_{T_{\bullet}}(u))&=\|\widetilde{h}^{s}_{\bullet}(f-uy_{T_{\bullet}}(u)-(-\Delta )^sy_{T_{\bullet}}(u))\|^2_{L^2(\Omega)}\\
 &=\|\widetilde{h}^{s}_{\bullet}((-\Delta )^sy+uy-uy_{T_{\bullet}}(u)-(-\Delta )^sy_{T_{\bullet}}(u))\|^2_{L^2(\Omega)}\\
 &=\|h^{s}_{\bullet}(-\Delta )^s(y-y_{T_{\bullet}}(u))\|^2_{L^2(\Omega)}+\|h^{s}_{\bullet}u(y-y_{T_{\bullet}}(u))\|^2_{L^2(\Omega)}\\
 &\leq\|h^{s}_{\bullet}(-\Delta )^s\Pi_{\bullet}(y-y_{T_{\bullet}}(u))\|^2_{L^2(\Omega)}+\|h^{s}_{\bullet}(-\Delta )^s(1-\Pi_{\bullet})(y-y_{T_{\bullet}}(u))\|^2_{L^2(\Omega)}+C\|u\|\ \|y-y_{T_{\bullet}}(u)\|\\
 &\leq \|\Pi_{\bullet}(y-y_{T_{\bullet}}(u))\|^2_{\widetilde{H}^{s}(\Omega)}+ \|(1-\Pi_{\bullet})(y-y_{T_{\bullet}}(u))\|^2_{\widetilde{H}^{s}(\Omega)} \\ &\quad+\sum\limits_{T\in T_{\bullet}}h_{\bullet}^{2s}\|(1-\Pi_{\bullet})(y-y_{T_{\bullet}}(u))\|_{H^{2s}(\Omega^2_{\bullet}(T))}^2
+C\|y-y_{T_{\bullet}}(u)\|_{\widetilde{H}^{s}(\Omega)}.
 \end{align*}
 By (\ref{sz2}), we can obtain
  \begin{align*}
  \|\Pi_{\bullet}(y-y_{T_{\bullet}}(u))\|^2_{\widetilde{H}^{s}(\Omega)}+ \|(1-\Pi_{\bullet})(y-y_{T_{\bullet}}(u))\|^2_{\widetilde{H}^{s}(\Omega)}\leq C\|y-y_{T_{\bullet}}(u)\|^2_{\widetilde{H}^{s}(\Omega)}.
  \end{align*}
 We observe that  $0<s\leq 1/2$ and $y-y_{T_{\bullet}}(u)\in H^{s+1/2-\epsilon}(\Omega)\cap\widetilde{H}^{s}(\Omega), 0\leq \epsilon< \min\{s, 1/2-s\}$, it is noted in (\ref{sz4}) that
   \begin{align*}
h_{\bullet}^{2s}\|(1-\Pi_{\bullet})(y-y_{T_{\bullet}}(u))\|_{H^{2s}(\Omega^2_{\bullet}(T))}^2\leq Ch_{\bullet}^{1-2\epsilon}\|y-y_{T_{\bullet}}(u)\|_{H^{s +1/2-\epsilon}(\Omega^3_{\bullet}(T))}^2.
  \end{align*}
We finally conclude that
   \begin{align*}
\mathcal{\eta}^2_{\mathcal{T}}(y_{T_{\bullet}}(u))\leq  { C}_{\mathrm{eff}}  \Big(\|y-y_{T_{\bullet}}(u)\|_{\widetilde{H}^{s}(\Omega)}^2+\sum\limits_{T\in T_{\bullet}}h_{\bullet}^{1-2\epsilon}\|y-y_{T_{\bullet}}(u)\|_{H^{s+1/2-\epsilon}(\Omega^3_{\bullet}(T))}^2\|\Big).
    \end{align*}
\end{proof}

\section{The optimal control problem}
The weak formulation of the optimal control problem (\ref{object})-(\ref{state}) can be stated as follows:
 \begin{align}\label{weak_object}
  \min\limits_{y\in \widetilde{H}^{s}(\Omega),\ u\in  U_{ad}} J(y,u)
\end{align}
subject to
\begin{align}\label{weak_state}
a(y,v)+(uy,v)=(f,v), \ \ \forall v\in \widetilde{H}^{s}(\Omega).
   \end{align}
Here $
U_{ad}=\Big\{v\in L^{\infty}(\Omega)| 0<a\leq v\leq b,  \ a, b\in R^+\Big\}.
$
According to \cite{bifen}, we know that the optimal control problem (\ref{weak_object})-(\ref{weak_state}) admits at least one global solution $(y,u)\in \widetilde{H}^{s}(\Omega)\times U_{ad}.$
We import the control to state map $S: L^{\infty}(\Omega)\rightarrow\widetilde{H}^{s}(\Omega)$, which given a control $u$, associated to it the unique state $y=Su$.

 In the context of the control problem (\ref{weak_object})-(\ref{weak_state}), which is not convex, we analyze optimality conditions based on local solutions in $L^2(\Omega)$. By the definition of $S$, we introduce the reduced cost functional $j: U_{ad}\rightarrow R$:
$$j(u)=J(Su,u):= \frac{1}{2} \|Su-y_d\|^2_{L^2(\Omega)}
  +\frac{\lambda}{2} \|u\|^2_{L^2(\Omega)}.$$
  We know that $u\in U_{ad}$ is called an optimal control for problem if $j(u)=\min\limits_{\bar{u}\in U_{ad}} j(\bar{u})$, then  $y=Su\in \widetilde{H}^{s}(\Omega)$ is called the optimal state associated with $u$. To provide first-order optimality conditions, we give the equation
\begin{eqnarray}\label{adj}
a(w,z)+(uz,w)=(y-y_d,w), \ \ \forall w\in \widetilde{H}^{s}(\Omega),
   \end{eqnarray}
 which  is called the adjoint equation, and its solution $z\in \widetilde{H}^{s}(\Omega)$ is called the adjoint state associated with $(y,u)$. The following Theorem presents the necessary optimality condition for the optimal control problem (\ref{weak_object})-(\ref{weak_state}).
  \begin{Theorem}
  If $u\in U_{ad}$ denotes the optimal control for (\ref{weak_object})-(\ref{weak_state}), then we can obtain the variational inequality
 \begin{align}\label{j'u}
j'(u)(v-u)=(\lambda u-yz,v-u)\geq0,\ \ \forall v\in  U_{ad}.
   \end{align}
   Here $y=Su$ solves (\ref{weak_state}), $z\in \widetilde{H}^{s}(\Omega)$ solves (\ref{adj}) and   $j'(u)$ denotes the Gate$\rm{\hat{a}}$ux detervative of $j$ at $u$.
  \end{Theorem}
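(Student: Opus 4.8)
The plan is to derive the first-order necessary condition from the standard variational-inequality characterization of a minimizer of a Gateaux-differentiable functional on a convex set, and then to compute $j'(u)$ explicitly using the chain rule through the control-to-state map $S$ together with the adjoint equation. First I would record that $U_{ad}$ is a convex subset of $L^2(\Omega)$ and that $j$ is Gateaux differentiable on it; consequently, if $u$ is a (local) minimizer then for every $v\in U_{ad}$ the segment $u+t(v-u)$ lies in $U_{ad}$ for $t\in[0,1]$, and differentiating $t\mapsto j(u+t(v-u))$ at $t=0^+$ yields $j'(u)(v-u)\ge 0$. This is the routine convex-analysis step and costs nothing beyond citing differentiability of $S$ (which follows from the implicit function theorem applied to the bilinear state equation, as in the cited works on bilinear fractional control).

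Next I would identify $j'(u)$. Writing $y=Su$ and letting $y'=S'(u)\delta u$ denote the linearized state, differentiation of the state equation $a(y,v)+(uy,v)=(f,v)$ in the direction $\delta u$ gives that $y'\in\widetilde H^s(\Omega)$ solves $a(y',w)+(uy',w)=-(\delta u\, y,w)$ for all $w\in\widetilde H^s(\Omega)$. Then $j'(u)\delta u=(y-y_d,y')+\lambda(u,\delta u)$. The key algebraic step is to eliminate $y'$: take $w=z$, the adjoint state solving (\ref{adj}), in the linearized equation, and simultaneously take $w=y'$ as the test function in the adjoint equation (\ref{adj}). Since the bilinear form $a(\cdot,\cdot)$ is symmetric and $(uz,y')=(uy',z)$, the two identities combine to give $(y-y_d,y')=a(z,y')+(uy',z)= -(\delta u\, y,z)=-(yz,\delta u)$. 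Substituting back yields $j'(u)\delta u=(\lambda u-yz,\delta u)$, and specializing $\delta u=v-u$ gives the stated variational inequality $(\lambda u-yz,v-u)\ge 0$ for all $v\in U_{ad}$.

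I would then assemble these pieces: the convexity argument produces $j'(u)(v-u)\ge 0$, and the adjoint computation identifies the left side as $(\lambda u-yz,v-u)$, which finishes the proof. The only mild subtlety worth a sentence is that one must ensure the pairings $(y-y_d,y')$, $(yz,\delta u)$, etc., are well defined: this follows from Lemma \ref{state_regularity2} (or Lemma \ref{regy}) giving $y\in L^\infty(\Omega)$, hence $yz\in L^2(\Omega)$ and $\delta u\, y\in L^2(\Omega)$, so that the adjoint source $y-y_d\in L^2(\Omega)$ is legitimate and all inner products make sense.

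I do not anticipate a genuine obstacle here; the main point requiring care is the correct bookkeeping in the adjoint identity — making sure the sign from the linearized equation (the source $-\delta u\, y$) propagates correctly so that the final expression is $\lambda u - yz$ rather than $\lambda u + yz$. The symmetry of $a$ and the bilinear structure $(uy',z)=(uz,y')$ are what make the cancellation work, so I would state the adjoint equation in the symmetric form (\ref{adj}) and invoke it with the linearized state as test function explicitly.
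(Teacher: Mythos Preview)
Your argument is correct and is the standard derivation: convexity of $U_{ad}$ gives $j'(u)(v-u)\ge 0$, and the adjoint identity together with the linearized state equation yields $j'(u)\delta u=(\lambda u-yz,\delta u)$. The paper does not supply its own proof of this theorem; it is stated without proof, and the subsequent restatement of the first-order system cites \cite{bifen}. Your write-up fills in precisely the expected computation, including the sign bookkeeping and the integrability check via Lemma~\ref{state_regularity2}.
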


\subsection{First and second order optimality conditions}
 Next, we give the first-order optimality conditions \cite{bifen}.
   \begin{Theorem}
   For any locally optimal control $u\in  U_{ad}$ for problem (\ref{weak_object})-(\ref{weak_state}), and for the corresponding state $y\in \widetilde{H}^{s}(\Omega)$ and adjoint state $z\in \widetilde{H}^{s}(\Omega)$, the following equations and variational inequality hold:
   \begin{align}
&a(y,v)+(uy,v)=(f,v), \ \ \forall v\in \widetilde{H}^{s}(\Omega),\label{lianxuy}\\
&a(w,z)+(uz,w)=(y-y_d,w), \ \ \forall w\in \widetilde{H}^{s}(\Omega)\label{lianxuz}
   \end{align}
   and
   \begin{eqnarray}\label{var_con}
(\lambda u-yz,v-u)\geq0, \ \ \forall v\in  U_{ad}.
   \end{eqnarray}
    \end{Theorem}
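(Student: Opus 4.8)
The plan is to derive the three conditions from the basic calculus of variations applied to the reduced functional $j$, exactly as in the first-order optimality theorem stated just above (the variational inequality \eqref{j'u}), but now phrased for a local solution. First I would recall that, by the existence result cited from \cite{bifen}, a locally optimal $u\in U_{ad}$ exists together with its state $y=Su\in\widetilde H^s(\Omega)$; equation \eqref{lianxuy} is then nothing but the weak form \eqref{weak_state} defining $S$, so that line is immediate. Next I would introduce the adjoint state $z\in\widetilde H^s(\Omega)$ as the unique solution of \eqref{adj} (well-posedness again from Lax--Milgram, since $u\geq a>0$ makes the bilinear form $a(\cdot,\cdot)+(u\cdot,\cdot)$ coercive), which gives \eqref{lianxuz} by definition.

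The substantive step is the variational inequality \eqref{var_con}. Here I would differentiate $j$ at $u$: since $S$ is differentiable (the map $u\mapsto y$ is smooth because the dependence is through a coercive bilinear form with a bounded coefficient perturbation), for a direction $h=v-u$ with $v\in U_{ad}$ one computes $j'(u)h=(Su-y_d,S'(u)h)+\lambda(u,h)$. The linearized state $\delta y:=S'(u)h$ solves $a(\delta y,\psi)+(u\delta y,\psi)=-(h\,y,\psi)$ for all $\psi\in\widetilde H^s(\Omega)$. Testing the adjoint equation \eqref{adj} with $w=\delta y$ and the linearized equation with $\psi=z$, and comparing, the terms $a(\delta y,z)+(u\delta y,z)$ cancel, yielding $(y-y_d,\delta y)=-(h\,y,z)=-(yz,h)$. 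Substituting back gives $j'(u)h=(\lambda u-yz,h)=(\lambda u-yz,v-u)$. Since $u$ is a local minimiser over the convex set $U_{ad}$ and $u+t(v-u)\in U_{ad}$ for $t\in[0,1]$, the one-sided derivative $\tfrac{d}{dt}j(u+t(v-u))\big|_{t=0^+}\geq 0$, which is exactly \eqref{var_con}.

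I do not expect any serious obstacle; the only points requiring care are the justification that $S$ is Gateaux (indeed Fréchet) differentiable on $L^\infty(\Omega)$ with the stated linearized equation, and the regularity bookkeeping that makes all the pairings $(h\,y,z)$, $(y-y_d,\delta y)$ well defined --- here one uses $y\in L^\infty(\Omega)$ (Lemma \ref{state_regularity2}, since $f\in L^r$ with $r>d/2s$ may be assumed, or at least $y\in\widetilde H^s(\Omega)\hookrightarrow L^2$) and $h\in L^\infty(\Omega)$, so that $h\,y\in L^2(\Omega)\subset H^{-s}(\Omega)$ and the linearized and adjoint problems have $\widetilde H^s$ solutions. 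All of this is standard for bilinear control problems and is already invoked in \cite{bifen, casas2018}, so I would state it briefly and refer there rather than reprove it. The adjoint-equation test-function trick is the heart of the argument and is a one-line computation once the linearized equation is in hand.
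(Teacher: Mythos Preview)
Your proposal is correct and follows the standard adjoint-equation derivation. Note that the paper itself does not give a proof of this theorem: it simply cites \cite{bifen} and, in the preceding Theorem, already records the identity $j'(u)(v-u)=(\lambda u-yz,v-u)\ge 0$ without derivation. Your argument is precisely the computation underlying that identity (linearize the control-to-state map, test the adjoint equation against the linearized state, and use convexity of $U_{ad}$ for the sign condition), so you are supplying the details the paper omits rather than taking a different route.
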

    For $a,b\in R$,  we introduce a projection operator $\Pi_{[a, b]}: L^{\infty}(\Omega)\rightarrow U_{ad}$ defined by
 \begin{eqnarray}\label{projection operator}
 \Pi_{[a,b]}(v)=\min\{b, \max\{a,v\}\}.
  \end{eqnarray}
 We can conclude the following result: If the regularization parameter $\lambda>0,$  then the variational inequality (\ref{var_con}) is equivalent to the following projection formula
   \begin{eqnarray}\label{u}
 u=\Pi_{[a,b]}\left(\lambda^{-1}yz\right).
  \end{eqnarray}
  From the above projection formula, we can obtain an important regularity result \cite{bifen}.
  \begin{Lemma}
 Let $s\in(0, 1),\ r > d/2s,\ 0<\epsilon< s$, assume that $f,\ y_d \in L^2(\Omega)\cap L^r(\Omega)$, there exists a locally optimal solution $(y,z,u)$ for the first-order optimality conditions satisfing
    \begin{eqnarray}\label{opregy}
 y,\ z\in H^{s+\sigma-\epsilon}(\Omega)\cap L^{\infty}(\Omega),\ \|yz \|_{ H^{s+\sigma-\epsilon}(\Omega)}\leq C \epsilon^{-\tau}
  \end{eqnarray}
%\left(\|f\|(\|f\|_{L^r(\Omega)}+\|y_d\|_{L^r(\Omega)})+\|f\|_{L^r(\Omega)}\|y_d\|\right)
and
$$
 \|u\|_{ H^{s+\sigma-\epsilon}(\Omega)}\leq C \epsilon^{-\tau},
$$
 where $\sigma=\frac{1}{2}$ for $\frac{1}{2}<s<1$ and $\sigma=s-\epsilon$ for $0<s\leq\frac{1}{2}$,\ $\tau=\frac{1}{2}$ for $\frac{1}{2}<s<1$ and $\tau=\frac{1}{2}+\zeta$ for $0<s\leq\frac{1}{2}$. Here $\zeta$ denotes positive constants that depend on $\Omega,\ d,\ u$ respectively.
  \end{Lemma}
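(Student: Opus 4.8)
The plan is to bootstrap regularity through the coupled system (\ref{lianxuy})--(\ref{var_con}) starting from the a priori bounds we already have. First I would invoke Lemma~\ref{state_regularity2} with the hypotheses $f\in L^r(\Omega)$ and the fact that $u\in U_{ad}\subset L^\infty(\Omega)$, so the zeroth-order term $uy$ is harmless: this gives $y\in\widetilde H^s(\Omega)\cap L^\infty(\Omega)$ with $\|y\|_{\widetilde H^s(\Omega)}+\|y\|_{L^\infty(\Omega)}\le C\|f\|_{L^r(\Omega)}$. Rewriting the adjoint equation (\ref{lianxuz}) as $(-\Delta)^s z + uz = y-y_d$ and noting that $y-y_d\in L^2(\Omega)\cap L^r(\Omega)$ by hypothesis, a second application of Lemma~\ref{state_regularity2} (again absorbing $uz$ since $u\ge a>0$ is bounded) yields $z\in\widetilde H^s(\Omega)\cap L^\infty(\Omega)$ with the analogous bound. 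This establishes the $L^\infty$ part of (\ref{opregy}).

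Next I would upgrade the Sobolev regularity. Since $y\in L^\infty(\Omega)$ and $u\in L^\infty(\Omega)$, the right-hand side $f-uy$ of the state equation lies in $L^2(\Omega)$; applying Lemma~\ref{regy} gives $y\in H^{s+\sigma-\epsilon}(\Omega)$ with $\|y\|_{H^{s+\sigma-\epsilon}(\Omega)}\le C\epsilon^{-\tau}$, where $\sigma=\min\{s,1/2\}$ and $\tau$ is as in Lemma~\ref{regy} (matching the $\tau$ claimed here after renaming). Likewise $y-y_d\in L^2(\Omega)$, so $z\in H^{s+\sigma-\epsilon}(\Omega)$ with the same type of bound. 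For the product $yz$: both factors are in $H^{s+\sigma-\epsilon}(\Omega)\cap L^\infty(\Omega)$, and on a bounded Lipschitz domain this intersection is a Banach algebra for the relevant exponent (the fractional Leibniz/Kato--Ponce estimate $\|yz\|_{H^\theta}\le C(\|y\|_{H^\theta}\|z\|_{L^\infty}+\|y\|_{L^\infty}\|z\|_{H^\theta})$ with $\theta=s+\sigma-\epsilon\in(0,1)$), so $\|yz\|_{H^{s+\sigma-\epsilon}(\Omega)}\le C\epsilon^{-\tau}$, absorbing the $L^\infty$ constants into $C$.

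Finally, for the control itself I would use the projection formula (\ref{u}), $u=\Pi_{[a,b]}(\lambda^{-1}yz)$. The truncation operator $\Pi_{[a,b]}=\min\{b,\max\{a,\cdot\}\}$ is a composition of Lipschitz maps and is known to be bounded on $H^\theta(\Omega)$ for $0\le\theta\le 1$ (this is where a Lipschitz-truncation stability result for fractional Sobolev spaces, valid precisely because the order is below $1$, enters). Hence $\|u\|_{H^{s+\sigma-\epsilon}(\Omega)}\le C\lambda^{-1}\|yz\|_{H^{s+\sigma-\epsilon}(\Omega)}\le C\epsilon^{-\tau}$, with $C$ now also depending on $\lambda$, $a$, $b$; one then adjusts the statement's $\sigma$ to $s-\epsilon$ in the regime $s\le 1/2$ so that $s+\sigma-\epsilon=2s-2\epsilon\le 1$ stays admissible for the truncation step. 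The main obstacle is this last point together with the product estimate: one must be careful that the exponent $s+\sigma-\epsilon$ stays strictly below (or at most equal to) $1$ so that both the pointwise-multiplier/algebra property and the stability of $\Pi_{[a,b]}$ on $H^{s+\sigma-\epsilon}(\Omega)$ are legitimately available — this is exactly why $\sigma$ is capped at $1/2$ (resp. replaced by $s-\epsilon$) rather than taken as large as the interior elliptic regularity would otherwise permit, and why the $\epsilon$-loss with blow-up rate $\epsilon^{-\tau}$ is retained throughout.
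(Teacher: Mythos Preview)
The paper does not prove this lemma in the text; it is quoted with a citation to \cite{bifen} and no argument is supplied. Your bootstrap strategy --- first $L^\infty$ bounds for $y$ and $z$ via Lemma~\ref{state_regularity2}, then $H^{s+\sigma-\epsilon}$ regularity via Lemma~\ref{regy}, then a fractional Leibniz rule for the product $yz$, and finally stability of the pointwise truncation $\Pi_{[a,b]}$ on $H^\theta$ applied to the projection formula (\ref{u}) --- is exactly the natural route and is essentially what the cited reference does.

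One small correction to your justification of the last step: for $\tfrac12<s<1$ the exponent $\theta=s+\sigma-\epsilon=s+\tfrac12-\epsilon$ can lie strictly between $1$ and $\tfrac32$, so the direct Gagliardo-seminorm argument (pointwise Lipschitz contraction of differences) you invoke for ``order below $1$'' is not enough by itself. What is actually needed is the Bourdaud--Meyer type result that Lipschitz superposition operators act boundedly on $H^\theta(\Omega)$ for all $0\le\theta<\tfrac32$; this covers the full range $s\in(0,1)$ here. Your rationale for why $\sigma$ is capped (to keep $\theta\le1$) is therefore not quite the right explanation for the case $s\le\tfrac12$ either, since $2s-\epsilon<1$ already; the slight loss to $\sigma=s-\epsilon$ in that regime is an artifact of the argument in \cite{bifen} rather than a constraint imposed by the truncation step. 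With that adjustment your sketch is sound.
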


  Define $\Lambda=\lambda u-yz$, where $(y,z,u)\in \widetilde{H}^{s}(\Omega)\times\widetilde{H}^{s}(\Omega)\times U_{ad}$ is the solution of (\ref{lianxuy})-(\ref{var_con}). For every $\kappa>0$, we introduce the following cone of critical directions \cite{casas2018}:
     \begin{eqnarray}\label{G_u}
G^{\kappa}_u=\{v(x)\in L^2(\Omega):v(x)=0\ \mbox{ if}\ |\Lambda|>\kappa\ \mbox{and}\ v(x)\ \mbox{satisfies}\ (\ref{v})\}
  \end{eqnarray}
  with
    \begin{eqnarray}\label{v}v(x)\left\{ \begin{aligned}
&\geq0,\ \mbox{if}\ u(x)=a,\\
&\leq0,\ \mbox{if}\ u(x)=b.
\end{aligned}\right.
 \end{eqnarray}
     In the upcoming Lemma \ref{auxiliary_es}, we will use the following necessary second order optimality conditions \cite{bifen}.
      \begin{Lemma}
   Let $u\in  U_{ad}$ satisfies the first-order optimality condition (\ref{var_con}). Then we have
   \begin{eqnarray}\label{second}
j''(u)(v,v)\geq \mu \|v\|^2,\ \ \ \forall v\in  G^{\kappa}_{u},
   \end{eqnarray}
   where $\mu>0$ and $ G^{\kappa}_{u}$ is given by the definition of (\ref{G_u}).
   \end{Lemma}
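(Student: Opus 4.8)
The plan is to reduce the coercivity (\ref{second}) to the standard strict second-order positivity on the critical cone and then propagate it to the larger cone $G^{\kappa}_u$ via a compactness argument of Casas--Tröltzsch type. First I would record the structure of the second derivative: differentiating the identity $j'(u)v=(\lambda u-yz,v)$ of (\ref{j'u}), and using the state equation (\ref{lianxuy}), the adjoint equation (\ref{lianxuz}), the linearized equation for $z_v:=S'(u)v\in\widetilde{H}^{s}(\Omega)$ (namely $a(z_v,w)+(uz_v,w)=-(vy,w)$ for all $w\in\widetilde{H}^{s}(\Omega)$), and the symmetry of $a(\cdot,\cdot)$, one arrives at
\begin{align*}
j''(u)(v,v)=\lambda\|v\|^2+\|z_v\|^2-2(z\,v,z_v),\qquad\forall v\in L^2(\Omega).
\end{align*}
I would then isolate the lower-order part $Q(v):=\|z_v\|^2-2(z\,v,z_v)$, so that $j''(u)(v,v)=\lambda\|v\|^2+Q(v)$.

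The analytic engine is the compactness of the linearization $v\mapsto z_v$ as a map $L^2(\Omega)\to L^2(\Omega)$. Since $y\in L^\infty(\Omega)$ by Lemma \ref{state_regularity2}, the Lax--Milgram estimate for the linearized equation yields $\|z_v\|_{\widetilde{H}^{s}(\Omega)}\le C\|v\|_{L^2(\Omega)}$, and $\widetilde{H}^{s}(\Omega)\hookrightarrow L^2(\Omega)$ is compact on the bounded domain $\Omega$. Consequently $Q$ is weakly sequentially continuous on $L^2(\Omega)$: if $v_n\rightharpoonup v$ in $L^2(\Omega)$, then $z_{v_n}\to z_v$ strongly in $L^2(\Omega)$, hence $\|z_{v_n}\|^2\to\|z_v\|^2$; and since $z\in L^\infty(\Omega)$ (cf.\ (\ref{opregy})) one has $z\,z_{v_n}\to z\,z_v$ strongly in $L^2(\Omega)$, so $(z\,v_n,z_{v_n})\to(z\,v,z_v)$ by pairing a weakly convergent sequence with a strongly convergent one.

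With these preparations I would argue by contradiction. If (\ref{second}) failed for every $\mu>0$ and every $\kappa>0$, I could pick $v_n\in G^{1/n}_u$ with $\|v_n\|=1$ and $j''(u)(v_n,v_n)<1/n$, and extract a subsequence with $v_n\rightharpoonup v$ in $L^2(\Omega)$. The decisive point is that the weak limit lies in the critical cone $C_u:=G^{0}_u$: for any fixed $\delta>0$ and all large $n$ one has $\{|\Lambda|>\delta\}\subseteq\{|\Lambda|>1/n\}$, so $v_n=0$ a.e.\ on $\{|\Lambda|>\delta\}$; since that constraint together with the sign conditions (\ref{v}) cut out a weakly closed set in $L^2(\Omega)$, the limit satisfies $v=0$ a.e.\ on $\{|\Lambda|>\delta\}$ and obeys (\ref{v}); letting $\delta\downarrow0$ gives $v\in C_u$. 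Then weak lower semicontinuity of the norm and weak continuity of $Q$ give
\begin{align*}
j''(u)(v,v)=\lambda\|v\|^2+Q(v)\le\liminf_{n\to\infty}\big(\lambda\|v_n\|^2+Q(v_n)\big)=\liminf_{n\to\infty}j''(u)(v_n,v_n)\le0.
\end{align*}
If $v\ne0$ this contradicts the strict positivity $j''(u)(v,v)>0$ on $C_u\setminus\{0\}$, which is the second-order optimality property of the reference control $u$ recorded in \cite{bifen,casas2018}; if $v=0$, then $Q(v_n)\to Q(0)=0$ forces $j''(u)(v_n,v_n)=\lambda\|v_n\|^2+Q(v_n)\to\lambda>0$, again a contradiction. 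Hence (\ref{second}) holds for some $\mu>0$.

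I expect the main obstacle to be the limit step in the cone constraints: verifying that the weak $L^2$-limit of the $v_n\in G^{1/n}_u$ really belongs to $C_u$ --- in particular that ``$v=0$ on $\{|\Lambda|>\delta\}$ for every $\delta>0$'' implies ``$v=0$ on $\{|\Lambda|>0\}$'', and that the unilateral sign constraints (\ref{v}) are preserved under weak limits (they define a closed convex cone). A second, more technical point is confirming that $Q$ is a genuinely weakly continuous lower-order perturbation of $\lambda\|v\|^2$, which is exactly where the $L^\infty$-regularity of $y$ and $z$ and the compact fractional Sobolev embedding enter.
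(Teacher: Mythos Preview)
The paper does not supply its own proof of this lemma; it merely states the result and attributes it to \cite{bifen} (with \cite{casas2018} cited just above for the cone $G^{\kappa}_u$). There is therefore no in-paper argument to compare against.

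Your proposal is the standard Casas--Tr\"oltzsch compactness/contradiction argument, and it is essentially correct and is presumably what the cited references contain: the explicit formula $j''(u)(v,v)=\lambda\|v\|^2+\|z_v\|^2-2(zv,z_v)$, compactness of $v\mapsto z_v$ via $\widetilde{H}^{s}(\Omega)\hookrightarrow L^2(\Omega)$, weak sequential continuity of the lower-order part $Q$, and the dichotomy $v\ne0$ / $v=0$ to reach a contradiction are all standard and sound. The cone-closure step (passing from $v_n\in G^{1/n}_u$ to $v\in C_u$) is handled correctly, since the sign constraints (\ref{v}) cut out a closed convex cone and the vanishing constraint on $\{|\Lambda|>\delta\}$ is preserved under weak limits for each fixed $\delta$.

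One point worth flagging: the lemma as printed in the paper assumes only the first-order condition (\ref{var_con}), yet (\ref{second}) with $\mu>0$ cannot follow from that alone. Your proof tacitly imports the additional hypothesis $j''(u)(v,v)>0$ for all $v\in C_u\setminus\{0\}$, which is the second-order \emph{sufficient} condition; the lemma is really the equivalence ``strict positivity on $C_u$ $\Longleftrightarrow$ coercivity on some $G^{\kappa}_u$'' from \cite{casas2018,bifen}. The paper's label ``necessary second order optimality conditions'' is a misnomer, but your reading of the statement is the correct one for the use made of it in Lemma~\ref{auxiliary_es}.
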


\subsection{Discrete scheme for the optimal control problem}
We propose two finite element discretization schemes to approximate the optimal solution to the control problem (\ref{weak_object})-(\ref{weak_state}): both techniques discretize the state and adjoint variables with piecewise linear functions. They differ in the discretization techniques of the control variable. Section 4.2.1 considers that the control variable is discretized by piecewise constant functions, i.e., fully discrete. Section 4.2.2 considers that the control variable is not discrete, i.e., semi-discrete, also so-called variational discretization methods.
\subsubsection{The fully discrete scheme}
Based on the finite element space $\mathbb{V}_{T_{\bullet}}$, we can define the finite dimensional approximation to the optimal control problem (\ref{weak_object})-(\ref{weak_state}) as follows:
$$
  \min\limits_{(y_{T_{\bullet}},u_{T_{\bullet}})\in \mathbb{V}_{T_{\bullet}}\times U_{ad,h}} J(y_{T_{\bullet}},u_{T_{\bullet}})
$$
subject to
 \begin{eqnarray}\label{lisanstate}
a(y_{T_{\bullet}},v_{T_{\bullet}})+(u_{T_{\bullet}}y_{T_{\bullet}},v_{T_{\bullet}})=(f,v_{T_{\bullet}}),\ \forall v_{T_{\bullet}}\in \mathbb{V}_{T_{\bullet}}.
  \end{eqnarray}
Here the discrete admissible set $U_{ad,h}$ is defined by
$$U_{ad,h}=U_{h}\cap U_{ad},$$
where $U_{h}=\{u_{T_{\bullet}}\in L^{\infty}(\Omega):u_{T_{\bullet}}|_T\in   \mathbb{P}_0(T), \forall T\in T_{\bullet}\}$. Given this configuration, the discretized first-order necessary optimality conditions can be stated as follows:
 \begin{eqnarray}\label{lisan}\left\{ \begin{aligned}
&a(y_{T_{\bullet}},v_{T_{\bullet}})+(u_{T_{\bullet}}y_{T_{\bullet}},v_{T_{\bullet}})=(f,v_{T_{\bullet}}), &\forall v_{T_{\bullet}}\in \mathbb{V}_{T_{\bullet}},\\
&a(w_{T_{\bullet}},z_{T_{\bullet}})+(u_{T_{\bullet}}z_{T_{\bullet}},w_{T_{\bullet}})=(y_{T_{\bullet}}-y_d,w_{T_{\bullet}}), &\forall w_{T_{\bullet}}\in \mathbb{V}_{T_{\bullet}},\\
 &(\lambda u_{T_{\bullet}}-y_{T_{\bullet}}z_{T_{\bullet}}, v_{T_{\bullet}}-u_{T_{\bullet}})\geq0,\ &\forall v_{T_{\bullet}}\in  U_{ad,h}.
  \end{aligned}\right.
  \end{eqnarray}
 Here, we call $y_{T_{\bullet}},\ z_{T_{\bullet}}$ and $u_{T_{\bullet}}$ the discrete optimal state, adjoint state and control, respectively.
  Let $\chi: U_{h}\rightarrow R$,\ $u_{T_{\bullet}}\mapsto \chi(u_{T_{\bullet}})=\int_{\Omega}|u_{T_{\bullet}}|dx=\sum\limits_{ T\in T_{\bullet}}|u_{T_{\bullet}}|\cdot|T|.$ The  previous variational inequality can be rewritten as
  $$(\sum\limits_{ T\in T_{\bullet}}\lambda\cdot |T|\cdot u_{T_{\bullet}}|_{T}-\sum\limits_{ T\in T_{\bullet}}\int_{T}y_{T_{\bullet}}z_{T_{\bullet}}dx)( v_{T_{\bullet}}|_{T}-u_{T_{\bullet}}|_{T})\geq 0,\ 0<a\leq v_{T_{\bullet}}|_{T}\leq b.$$
According to \cite{Tro}, we know that if $\lambda>0,$ then the following projection formula holds
  \begin{align*}
u_{T_{\bullet}}|_T=\Pi_{a,b}((\lambda|T|)^{-1}\int_{T}y_{T_{\bullet}}z_{T_{\bullet}}dx).
  \end{align*}
In order to analyze the a posteriori error estimator for the fully discrete scheme, we first introduce the auxiliary variable  $\tilde{u}$ based on the projection operator $\Pi_{[a,b]}$ as defined in (\ref{projection operator})
  \begin{eqnarray*}
\tilde{u}=\Pi_{[a,b]}\left(\lambda^{-1}y_{T_{\bullet}}z_{T_{\bullet}}\right).
  \end{eqnarray*}
  An advantageous characteristic of the $\tilde{u}$ definition is its fulfillment of the subsequent variational inequality
 \begin{eqnarray}\label{var_au}
(\lambda \tilde{u}-y_{T_{\bullet}}z_{T_{\bullet}},v-\tilde{u})\geq0, \ \ \forall v\in  U_{ad}.
   \end{eqnarray}
   \subsubsection{The semi-discrete scheme}
In the following, we will discuss the variational discretization approach, also known as the method introduced by Hinze in \cite{Hinze}.
We propose the semi-discrete scheme as follows:
Find $(y_{T_{\bullet}},u_{T_{\bullet}})$ such that $\min J(y_{T_{\bullet}},u_{T_{\bullet}})$ s.t.
 \begin{eqnarray*}
a(y_{T_{\bullet}},v_{T_{\bullet}})+(u_{T_{\bullet}}y_{T_{\bullet}},v_{T_{\bullet}})=(f,v_{T_{\bullet}}),\ \forall v_{T_{\bullet}}\in \mathbb{V}_{T_{\bullet}}.
  \end{eqnarray*}
Here $u_{T_{\bullet}}\in U_{ad}.$
The existence of a discrete solution and first-order optimality conditions for the semi-discrete scheme follow standard arguments and are specified as follows:
 Find $(y_{T_{\bullet}},z_{T_{\bullet}},u_{T_{\bullet}})\in \mathbb{V}_{T_{\bullet}}\times \mathbb{V}_{T_{\bullet}}\times U_{ad}$ such that
 \begin{eqnarray*}\left\{ \begin{aligned}
&a(y_{T_{\bullet}},v_{T_{\bullet}})+(u_{T_{\bullet}}y_{T_{\bullet}},v_{T_{\bullet}})=(f,v_{T_{\bullet}}), &\forall v_{T_{\bullet}}\in \mathbb{V}_{T_{\bullet}},\\
&a(w_{T_{\bullet}},z_{T_{\bullet}})+(u_{T_{\bullet}}z_{T_{\bullet}},w_{T_{\bullet}})=(y_{T_{\bullet}}-y_d,w_{T_{\bullet}}), &\forall w_{T_{\bullet}}\in \mathbb{V}_{T_{\bullet}},\\
 &(\lambda u_{T_{\bullet}}-y_{T_{\bullet}}z_{T_{\bullet}}, v-u_{T_{\bullet}})\geq0,\ &\forall v\in  U_{ad}.
  \end{aligned}\right.
  \end{eqnarray*}
The projection formulas for the control variable $u_{T_{\bullet}}$ can be expressed as follows
 \begin{align*}
u_{T_{\bullet}}=\Pi_{a,b}(\lambda^{-1}y_{T_{\bullet}}z_{T_{\bullet}}).
  \end{align*}
%\subsubsection{Variational discretization}
%In the following, we will discuss the variational discretization approach, also known as the method introduced by Hinze in \cite{Hinze}.
%We propose the semi-discrete scheme, reads as follows:
%Find $(y_{T_{\bullet}},u_{T_{\bullet}})$ such that $\min J(y_{T_{\bullet}},u_{T_{\bullet}})$ s.t.
% \begin{eqnarray*}
%a(y_{T_{\bullet}},v_{T_{\bullet}})+(u_{T_{\bullet}}y_{T_{\bullet}},v_{T_{\bullet}})=(f,v_{T_{\bullet}}),\ \forall v_{T_{\bullet}}\in \mathbb{V}_{T_{\bullet}}.
%  \end{eqnarray*}
%Here $u_{T_{\bullet}}\in U_{ad}.$
%The existence of a discrete solution and first order optimality conditions for the semi-discrete scheme follow standard arguments and read as follows:
% Find $(y_{T_{\bullet}},z_{T_{\bullet}},u_{T_{\bullet}})\in \mathbb{V}_{T_{\bullet}}\times \mathbb{V}_{T_{\bullet}}\times U_{ad}$ such that
% \begin{eqnarray*}\left\{ \begin{aligned}
%&a(y_{T_{\bullet}},v_{T_{\bullet}})+(u_{T_{\bullet}}y_{T_{\bullet}},v_{T_{\bullet}})=(f,v_{T_{\bullet}}), &\forall v_{T_{\bullet}}\in \mathbb{V}_{T_{\bullet}},\\
%&a(w_{T_{\bullet}},z_{T_{\bullet}})+(u_{T_{\bullet}}z_{T_{\bullet}},w_{T_{\bullet}})=(y_{T_{\bullet}}-y_d,w_{T_{\bullet}}), &\forall w_{T_{\bullet}}\in \mathbb{V}_{T_{\bullet}},\\
% &(\lambda u_{T_{\bullet}}-y_{T_{\bullet}}z_{T_{\bullet}}, v-u_{T_{\bullet}})\geq0,\ &\forall v\in  U_{ad}.
%  \end{aligned}\right.
%  \end{eqnarray*}

 \section{A posteriori error analysis}
 In this section, we give the following result which is instrumental to derive the a posteriori error estimates.
%  \begin{Assumption}\label{ass}
%  Let $y_{T_{\bullet}}$ be the discrete
%solution for problem (\ref{lisan}),  $y_{T_{\bullet}}$ is uniformly bounded in $L^{d/s}(\Omega)$.
%  \end{Assumption}
      \begin{Lemma}\label{auxiliary_es}(\cite{bifen})
  Let $s\in(0,1)$, $r>d/2s$, and $f,\ y_d\in L^2(\Omega)\cap L^r(\Omega)$. Assume that $y_{T_{\bullet}}$ is uniformly bounded in $L^{d/s}(\Omega)$. Then it holds that
   \begin{eqnarray}\label{auxiliary_es}
Q\|u- \tilde{u}\|^2\leq(j'( \tilde{u})-j'(u))( \tilde{u}-u).
   \end{eqnarray}
   Here $Q=2^{-1}\min\{\mu,\lambda\}$. The constant $\mu$ is given as in (\ref{second}) and $\lambda$ is the regularization parameter.
   \end{Lemma}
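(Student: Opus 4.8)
The plan is to exploit the second-order sufficient optimality condition (\ref{second}) together with a careful verification that the difference $\tilde u - u$ lies in (or is close enough to) the critical cone $G^{\kappa}_u$, so that the coercivity $j''(u)(v,v)\geq\mu\|v\|^2$ can be applied. The starting point is the standard mean-value identity: since $j$ is $C^2$ on $L^2(\Omega)$ (this follows from the smoothness of the control-to-state map $S$ and of $J$), we write
\[
(j'(\tilde u)-j'(u))(\tilde u-u)=\int_0^1 j''(u+t(\tilde u-u))(\tilde u-u,\tilde u-u)\,dt.
\]
The task is therefore to bound the right-hand side from below by $Q\|u-\tilde u\|^2$. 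To do this I would split $\tilde u - u$ into a part supported where $|\Lambda|\le\kappa$ and a remainder supported where $|\Lambda|>\kappa$. On the latter set, the strict sign of $\Lambda=\lambda u - yz$ forces the projection formula (\ref{u}) to pin $u$ to one of the bounds $a$ or $b$, and the variational inequalities (\ref{var_con}) and (\ref{var_au}) give a one-sided control that makes the contribution of this remainder either vanish or have a favorable sign.

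Next I would replace $j''$ at the intermediate point $u+t(\tilde u-u)$ by $j''(u)$ at the expense of a perturbation term. Concretely, $j''(v)(h,h)=\|S'(v)h\|_{L^2}^2 + \text{(terms linear in } S''(v)\text{)} + \lambda\|h\|_{L^2}^2$, and one shows $|j''(v_1)(h,h)-j''(v_2)(h,h)|\le C\|v_1-v_2\|\,\|h\|^2$ using the Lipschitz dependence of $S,S',S''$ on the control, which in turn rests on the uniform $L^{d/s}(\Omega)$ bound on $y_{T_\bullet}$ (hence on the discrete states) hypothesized in the statement and the $L^\infty$-regularity of Lemma \ref{state_regularity2}. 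Since $\|u+t(\tilde u-u)-u\|\le\|\tilde u-u\|$ and $\tilde u-u\to 0$ as the mesh is refined, this perturbation is absorbed: for $h$ in the right cone, $j''(u+t(\tilde u-u))(h,h)\ge \mu\|h\|^2 - C\|\tilde u-u\|\,\|h\|^2 \ge \tfrac{\mu}{2}\|h\|^2$ once $\|\tilde u-u\|$ is small enough. Combining this with the $\lambda\|h\|^2$ contribution that is present regardless of the cone membership, one arrives at the constant $Q=\tfrac12\min\{\mu,\lambda\}$: the $\lambda$-part handles the component of $\tilde u - u$ outside the cone, and the $\mu$-part handles the component inside.

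The main obstacle is the cone-membership bookkeeping: $\tilde u - u$ need not lie exactly in $G^{\kappa}_u$, because $\tilde u$ is built from the discrete states $y_{T_\bullet}z_{T_\bullet}$ rather than the continuous $yz$, so the active sets for $u$ and $\tilde u$ differ. The resolution is to show that on the set $\{|\Lambda|>\kappa\}$ the two variational inequalities (\ref{var_con}) with test function $v=\tilde u$ and (\ref{var_au}) with test function $v=u$ combine to give
\[
(\lambda u - yz - \lambda\tilde u + y_{T_\bullet}z_{T_\bullet},\,\tilde u - u)\ \ge\ 0,
\]
i.e. $(j'(u)-j'(\tilde u) + (yz - y_{T_\bullet}z_{T_\bullet}) \cdots)$-type manipulations; one then isolates the "bad" part of $\tilde u-u$, controls it by $\lambda\|\cdot\|^2$ directly from these inequalities, and applies (\ref{second}) only to the "good" part. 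Since this is exactly the argument of \cite{bifen} (the statement is cited to that reference), I would follow it, checking only that the uniform $L^{d/s}$ boundedness of $y_{T_\bullet}$ is what is needed to make the Lipschitz estimates on $j''$ uniform in the mesh.
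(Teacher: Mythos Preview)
The paper does not prove this lemma at all: it is stated with the citation \cite{bifen} and no proof is given in the text. So there is nothing in the paper to compare your argument against directly.

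That said, your outline is the standard route for results of this type and matches what one expects the cited reference \cite{bifen} to do: write $(j'(\tilde u)-j'(u))(\tilde u-u)$ via a mean-value integral of $j''$, split $\tilde u-u$ according to whether $|\Lambda|\le\kappa$ or $|\Lambda|>\kappa$, apply the second-order condition (\ref{second}) on the cone part to get the factor $\mu$, and use the explicit $\lambda\|\cdot\|^2$ term in $j''$ for the remainder. The perturbation step---replacing $j''(u+t(\tilde u-u))$ by $j''(u)$ and absorbing the error for $\|\tilde u-u\|$ small enough---is also the right mechanism, and your identification of the uniform $L^{d/s}$ bound on $y_{T_\bullet}$ as the ingredient making the Lipschitz estimates on $S,S',S''$ (hence on $j''$) uniform in the mesh is correct. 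One technical point you should make more precise: the estimate is claimed to hold unconditionally, not only asymptotically, so the smallness of $\|\tilde u-u\|$ must itself be derived from the hypotheses (typically via a priori convergence of $y_{T_\bullet}z_{T_\bullet}\to yz$ in $L^2$, which the $L^{d/s}$ bound plus consistency provides) rather than assumed; otherwise the constant $Q$ would depend on the mesh. With that caveat, the sketch is sound.
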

   \subsection{A posteriori error analysis for the fully discrete scheme}

In this section, we devise and analyze an a posteriori error estimator for the
fully discrete scheme.
 We first define the a posteriori error indicator and estimator for the optimal control variable
 \begin{eqnarray*}
\mathcal{\eta}^2_{u,T}(u_{T_{\bullet}}):=\|\tilde{u}-u_{T_{\bullet}}\|^2,\  \mathcal{\eta}^2_{\mathcal{U}}(u_{T_{\bullet}}):=\sum\limits_{T\in T_{\bullet}}\mathcal{\eta}^2_{u,T}(u_{T_{\bullet}}).
 \end{eqnarray*}
 We further define the weighted residual error estimators for the state and adjoint state variables as follows
 \begin{eqnarray}
\mathcal{\eta}_{y,T}^2(y_{T_{\bullet}}):=\|\widetilde{h}^{s}_{\bullet}(f-u_{T_{\bullet}}y_{T_{\bullet}}-(-\Delta )^sy_{T_{\bullet}})\|^2_{L^2(T)},\label{eydefin}\
 \mathcal{\eta}^2_{\mathcal{Y}}(y_{T_{\bullet}}):=\sum\limits_{T\in T_{\bullet}}\mathcal{\eta}^2_{y,T}(y_{T_{\bullet}}),\\ \mathcal{\eta}^2_{z,T}(z_{T_{\bullet}}):=\|\widetilde{h}^{s}_{\bullet}(y_{T_{\bullet}}-y_d-u_{T_{\bullet}}z_{T_{\bullet}}-(-\Delta )^sz_{T_{\bullet}})\|^2_{L^2(T)},\ \mathcal{\eta}^2_{\mathcal{Z}}(z_{T_{\bullet}}):=\sum\limits_{T\in T_{\bullet}}\mathcal{\eta}^2_{z,T}(z_{T_{\bullet}})\label{ezdefin}.
  \end{eqnarray}
   Next, we give the following auxiliary problems
   \begin{eqnarray}\left\{ \begin{aligned}
&a(y(u_{T_{\bullet}}),v)+(u_{T_{\bullet}}y(u_{T_{\bullet}}),v)=(f,v), &\forall v\in \widetilde{H}^{s}(\Omega),\label{fuzhuy}\\
&a(w,z(y_{T_{\bullet}}))+(u_{T_{\bullet}}z(y_{T_{\bullet}}),w)=(y_{T_{\bullet}}-y_d,w), &\forall w\in \widetilde{H}^{s}(\Omega).\label{fuzhuz}
  \end{aligned}\right.
  \end{eqnarray}
We know that $y_{T_{\bullet}}$ and $z_{T_{\bullet}}$ are the finite element solution of $y(u_{T_{\bullet}})\in H^{s+1/2-\epsilon}(\Omega)\cap\widetilde{H}^{s}(\Omega)$ and $ z(y_{T_{\bullet}})\in H^{s+1/2-\epsilon}(\Omega)\cap\widetilde{H}^{s}(\Omega),$ respectively.
  According to Lemma \ref{errorstate}, the following estimates hold.
  \begin{Lemma}\label{upper-lower}
   For $0 <s< 1$, $f,\ y_d\in L^2(\Omega)$, the weighted residual error estimators are reliable:
  \begin{eqnarray*}
\|y(u_{T_{\bullet}})-y_{T_{\bullet}}\|_{\widetilde{H}^{s}(\Omega)}\leq   C_{\mathrm{yrel}}  \mathcal{\eta}_{\mathcal{Y}}(y_{T_{\bullet}}),\ \|z(y_{T_{\bullet}})-z_{T_{\bullet}}\|_{\widetilde{H}^{s}(\Omega)}\leq  C_{\mathrm{zrel}} \mathcal{\eta}_{\mathcal{Z}}(z_{T_{\bullet}}).
 \end{eqnarray*}
Moreover,  for $0<s\leq 1/2,\ 0\leq \epsilon< \min\{s, 1/2-s\}$, the estimators are also efficient
  \begin{align*}
 \mathcal{\eta}_{\mathcal{Y}}(y_{T_{\bullet}})&\leq   C_{\mathrm{yeff}} \Big(\|y(u_{T_{\bullet}})-y_{T_{\bullet}}\|_{\widetilde{H}^{s}(\Omega)}^2+\sum\limits_{T\in T_{\bullet}}h_{\bullet}^{1-2\epsilon}\|y(u_{T_{\bullet}})-y_{T_{\bullet}}\|_{H^{s+1/2-\epsilon}(\Omega^3_{\bullet}(T))}^2\Big),\\
  \mathcal{\eta}_{\mathcal{Z}}(z_{T_{\bullet}})&\leq  C_{\mathrm{zeff}} \Big(\|z(y_{T_{\bullet}})-z_{T_{\bullet}}\|_{\widetilde{H}^{s}(\Omega)}^2+\sum\limits_{T\in T_{\bullet}}h_{\bullet}^{1-2\epsilon}\|z(y_{T_{\bullet}})-z_{T_{\bullet}}\|_{H^{s+1/2-\epsilon}(\Omega^3_{\bullet}(T))}^2\Big).
 \end{align*}
 \end{Lemma}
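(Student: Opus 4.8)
The plan is to reduce Lemma \ref{upper-lower} to a direct application of Lemma \ref{errorstate}. The key observation is that the auxiliary problems (\ref{fuzhuy})--(\ref{fuzhuz}) are exactly state equations of the form (\ref{weak_state_eq1}), but with the specific ``coefficient'' $u_{T_{\bullet}}\in U_{ad}$ in place of the generic $u$, and with modified right-hand sides. Concretely, $y(u_{T_{\bullet}})$ solves (\ref{weak_state_eq1}) with coefficient $u_{T_{\bullet}}$ and source $f$, and $y_{T_{\bullet}}$ is precisely its Galerkin approximation in $\mathbb{V}_{T_{\bullet}}$ in the sense of (\ref{weak_state2}) (this is the first line of the discrete optimality system (\ref{lisan})). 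Likewise, $z(y_{T_{\bullet}})$ solves a state-type equation with coefficient $u_{T_{\bullet}}$ and source $y_{T_{\bullet}}-y_d\in L^2(\Omega)$, and $z_{T_{\bullet}}$ is its Galerkin approximation (the second line of (\ref{lisan})). One subtlety worth a remark: the adjoint bilinear form in (\ref{adj}) pairs $(uz,w)$ rather than $(uw,z)$, but since multiplication by $u_{T_{\bullet}}$ is symmetric this is the same as the state form; so the adjoint equation genuinely fits the template of Lemma \ref{errorstate}.

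First I would record these identifications explicitly: apply Lemma \ref{errorstate} with $u\rightsquigarrow u_{T_{\bullet}}$, $f\rightsquigarrow f$, $y\rightsquigarrow y(u_{T_{\bullet}})$, $y_{T_{\bullet}}(u)\rightsquigarrow y_{T_{\bullet}}$. The reliability bound then reads
\begin{eqnarray*}
\|y(u_{T_{\bullet}})-y_{T_{\bullet}}\|_{\widetilde{H}^{s}(\Omega)}\leq C_{\mathrm{yrel}}\,\eta_{\mathcal{T}}(y_{T_{\bullet}}),
\end{eqnarray*}
and one checks that $\eta_{\mathcal{T}}(y_{T_{\bullet}})$ in the sense of Lemma \ref{errorstate} with coefficient $u_{T_{\bullet}}$ is literally $\eta_{\mathcal{Y}}(y_{T_{\bullet}})$ as defined in (\ref{eydefin}): both equal $\|\widetilde{h}^{s}_{\bullet}(f-u_{T_{\bullet}}y_{T_{\bullet}}-(-\Delta)^{s}y_{T_{\bullet}})\|_{L^2(\Omega)}$. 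That gives the first reliability estimate. For the adjoint, apply Lemma \ref{errorstate} again with source $y_{T_{\bullet}}-y_d\in L^2(\Omega)$ and coefficient $u_{T_{\bullet}}$; the residual estimator produced is exactly $\eta_{\mathcal{Z}}(z_{T_{\bullet}})$ from (\ref{ezdefin}), yielding the second reliability bound. The hypotheses $f,y_d\in L^2(\Omega)$ guarantee the right-hand sides lie in $L^2(\Omega)$, which is what Lemma \ref{errorstate} requires (via Lemma \ref{regy}) for the regularity $y(u_{T_{\bullet}}), z(y_{T_{\bullet}})\in H^{s+1/2-\epsilon}(\Omega)\cap\widetilde{H}^{s}(\Omega)$ used in the efficiency part.

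For efficiency, I would invoke the lower-bound half of Lemma \ref{errorstate}, valid for $0<s\le 1/2$ and $0\le\epsilon<\min\{s,1/2-s\}$, once with $(y(u_{T_{\bullet}}),y_{T_{\bullet}})$ and once with $(z(y_{T_{\bullet}}),z_{T_{\bullet}})$, which directly produces
\begin{align*}
\eta_{\mathcal{Y}}(y_{T_{\bullet}})&\leq C_{\mathrm{yeff}}\Big(\|y(u_{T_{\bullet}})-y_{T_{\bullet}}\|_{\widetilde{H}^{s}(\Omega)}^{2}+\sum_{T\in T_{\bullet}}h_{\bullet}^{1-2\epsilon}\|y(u_{T_{\bullet}})-y_{T_{\bullet}}\|_{H^{s+1/2-\epsilon}(\Omega_{\bullet}^{3}(T))}^{2}\Big),\\
\eta_{\mathcal{Z}}(z_{T_{\bullet}})&\leq C_{\mathrm{zeff}}\Big(\|z(y_{T_{\bullet}})-z_{T_{\bullet}}\|_{\widetilde{H}^{s}(\Omega)}^{2}+\sum_{T\in T_{\bullet}}h_{\bullet}^{1-2\epsilon}\|z(y_{T_{\bullet}})-z_{T_{\bullet}}\|_{H^{s+1/2-\epsilon}(\Omega_{\bullet}^{3}(T))}^{2}\Big).
\end{align*}
There is essentially no hard analytic step here — the proof is a transcription argument. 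The only point requiring minor care is the bookkeeping of the adjoint equation into the state-equation template: one must verify that swapping the roles of test/trial function in the zeroth-order term $(u_{T_{\bullet}}z,w)$ versus $(u_{T_{\bullet}}y,v)$ does not change anything, which follows because $u_{T_{\bullet}}$ acts as a symmetric multiplication operator and $a(\cdot,\cdot)$ is symmetric. So the ``main obstacle'' is really just ensuring the constants $C_{\mathrm{yrel}},C_{\mathrm{zrel}},C_{\mathrm{yeff}},C_{\mathrm{zeff}}$ inherited from Lemma \ref{errorstate} are independent of $u_{T_{\bullet}}$ — and they are, since the constant in Lemma \ref{errorstate} depends on $u$ only through the coercivity/continuity of the bilinear form $a(\cdot,\cdot)+(u\cdot,\cdot)$, which is uniform over $u\in U_{ad}$ because $0<a\le u_{T_{\bullet}}\le b$.
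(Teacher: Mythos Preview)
Your proposal is correct and matches the paper's approach exactly: the paper simply states that $y_{T_{\bullet}}$ and $z_{T_{\bullet}}$ are the finite element approximations of $y(u_{T_{\bullet}})$ and $z(y_{T_{\bullet}})$ respectively, and then invokes Lemma~\ref{errorstate} without further detail. Your write-up is in fact more careful than the paper's, since you spell out the identifications, note the symmetry of the zeroth-order term for the adjoint, and check uniformity of the constants over $u_{T_{\bullet}}\in U_{ad}$.
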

 Finally, from the above-defined error estimators for the state variable, the adjoint variable, as well as the control variable, we introduce a posteriori error estimator for the fully discrete problem:
$$\mathcal{\eta}^2_{\mathcal{OCP}}=\sum\limits_{T\in T_{\bullet}}\mathcal{\eta}^2_{\mathcal{OCP},T},\ \mathcal{\eta}^2_{\mathcal{OCP},T}=C_{st}\mathcal{\eta}_{y,T}^2(y_{T_{\bullet}})+C_{adj} \mathcal{\eta}_{z,T}^2(z_{T_{\bullet}})+C_{ct}\mathcal{\eta}_{u,T}^2(u_{T_{\bullet}}).$$
Define the errors $e_y=y-y_{T_{\bullet}},\ e_z=z-z_{T_{\bullet}}, e_u=u-u_{T_{\bullet}},$ the vector $\mathbf{e}=(e_y,e_z,e_u)^T$, and the norm
\begin{eqnarray*}
\|\mathbf{e}\|_{\Omega}^{2}=\|e_y\|_{\widetilde{H}^{s}(\Omega)}^{2}+\|e_z\|_{\widetilde{H}^{s}(\Omega)}^{2}+\|e_u\|^2.
\end{eqnarray*}
\subsubsection{Reliability of the error estimator $\mathcal{\eta}_{\mathcal{OCP}}$}
\begin{Theorem}\label{Reliability}
 Let $(y,z,u)\in \widetilde{H}^{s+\sigma-\epsilon}(\Omega)\cap L^{\infty}(\Omega)\times \widetilde{H}^{s+\sigma-\epsilon}(\Omega)\cap L^{\infty}(\Omega)\times U_{ad}$ and $(y_{T_{\bullet}},z_{T_{\bullet}},u_{T_{\bullet}})\in \mathbb{V}_{T_{\bullet}}\times \mathbb{V}_{T_{\bullet}}\times U_{ad,h}$ be the solutions of problems (\ref{lianxuy})-(\ref{var_con}) and (\ref{lisan}), respectively. For some parameter $0\leq\epsilon<s$,\ $\sigma$ is the same as in the statement of (\ref{opregy}), the following upper bound on the posterior error holds
   \begin{eqnarray*}
\|\mathbf{e}\|_{\Omega}^{2}\leq \mathcal{\eta}_{\mathcal{OCP}}^2.
 \end{eqnarray*}

%Both the continuous and discrete optimal variables have no effect on the constants $C_{st}$ and $C_{ad}$.
\end{Theorem}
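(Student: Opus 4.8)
The plan is to bound each of the three squared error components $\|e_y\|_{\widetilde H^s}^2$, $\|e_z\|_{\widetilde H^s}^2$, $\|e_u\|^2$ by the corresponding estimator contributions, then combine with appropriately chosen constants $C_{st},C_{adj},C_{ct}$ so that the sum telescopes into $\eta_{\mathcal{OCP}}^2$. The natural route is to insert the auxiliary states $y(u_{T_{\bullet}})$ and $z(y_{T_{\bullet}})$ from \eqref{fuzhuy}--\eqref{fuzhuz} and split each error via the triangle inequality, e.g. $\|e_y\|_{\widetilde H^s}\leq\|y-y(u_{T_{\bullet}})\|_{\widetilde H^s}+\|y(u_{T_{\bullet}})-y_{T_{\bullet}}\|_{\widetilde H^s}$. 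The second summand is controlled directly by $\eta_{\mathcal{Y}}(y_{T_{\bullet}})$ through Lemma~\ref{upper-lower}; likewise $\|z(y_{T_{\bullet}})-z_{T_{\bullet}}\|_{\widetilde H^s}\leq C_{\mathrm{zrel}}\eta_{\mathcal Z}(z_{T_{\bullet}})$.

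Next I would estimate the remaining ``continuous'' differences. For $y-y(u_{T_{\bullet}})$, both solve the state equation \eqref{weak_state} with the same $f$ but with coefficients $u$ and $u_{T_{\bullet}}$ respectively; subtracting gives $a(y-y(u_{T_{\bullet}}),v)+(u(y-y(u_{T_{\bullet}})),v)=((u_{T_{\bullet}}-u)y(u_{T_{\bullet}}),v)$, so testing with $v=y-y(u_{T_{\bullet}})$, using $u\geq a>0$ and the $L^\infty$ (or $L^{d/s}$) bound on $y(u_{T_{\bullet}})$ together with a Sobolev embedding, yields $\|y-y(u_{T_{\bullet}})\|_{\widetilde H^s}\leq C\|e_u\|$. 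An analogous argument for the adjoint equation, now accounting for the fact that $z$ and $z(y_{T_{\bullet}})$ have different right-hand sides ($y-y_d$ versus $y_{T_{\bullet}}-y_d$) and different coefficients ($u$ versus $u_{T_{\bullet}}$), gives $\|z-z(y_{T_{\bullet}})\|_{\widetilde H^s}\leq C(\|e_u\|+\|e_y\|_{\widetilde H^s})$, which after absorbing $\|e_y\|_{\widetilde H^s}$ via the already-established bound reduces again to $\|e_u\|$. So far everything is bounded by $\eta_{\mathcal Y}$, $\eta_{\mathcal Z}$ and $\|e_u\|$.

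The crux is therefore the control error $\|e_u\|=\|u-u_{T_{\bullet}}\|$. Here I would use the auxiliary variable $\tilde u=\Pi_{[a,b]}(\lambda^{-1}y_{T_{\bullet}}z_{T_{\bullet}})$ and split $\|e_u\|\leq\|u-\tilde u\|+\|\tilde u-u_{T_{\bullet}}\|=\|u-\tilde u\|+\eta_{u,T}(u_{T_{\bullet}})$-type term (summed). The term $\|\tilde u-u_{T_{\bullet}}\|$ is exactly $\eta_{\mathcal U}(u_{T_{\bullet}})$ by definition. For $\|u-\tilde u\|$, I invoke Lemma~\ref{auxiliary_es}: $Q\|u-\tilde u\|^2\leq(j'(\tilde u)-j'(u))(\tilde u-u)$. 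Using the first-order condition $j'(u)(\tilde u-u)\geq 0$ and the characterization $j'(\tilde u)(v)=(\lambda\tilde u-y(\tilde u)z(\tilde u),v)$, one expands $(j'(\tilde u)-j'(u))(\tilde u-u)$ into $(\lambda\tilde u - y(\tilde u)z(\tilde u) - \lambda u + yz, \tilde u - u)$; comparing with the variational inequality \eqref{var_au} satisfied by $\tilde u$ against $y_{T_{\bullet}}z_{T_{\bullet}}$, the difference is governed by $\|y(\tilde u)z(\tilde u)-y_{T_{\bullet}}z_{T_{\bullet}}\|$, which in turn — via the product rule $ab-cd=a(b-d)+(a-c)d$ and the $L^\infty$ bounds on the states — is dominated by $\|y(u_{T_{\bullet}})-y_{T_{\bullet}}\|_{\widetilde H^s}+\|z(y_{T_{\bullet}})-z_{T_{\bullet}}\|_{\widetilde H^s}$ plus lower-order terms in $\|e_u\|$, i.e. by $\eta_{\mathcal Y}+\eta_{\mathcal Z}$. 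The main obstacle is making this last chain of estimates quantitatively tight enough — carefully tracking how $y(\tilde u), z(\tilde u), y(u_{T_{\bullet}}), z(y_{T_{\bullet}})$ differ, absorbing all $\|e_u\|$-contributions into the left side using the coercivity constant $Q$, and then choosing $C_{st},C_{adj},C_{ct}$ large enough (depending on $Q$, $\mu$, $\lambda$, the embedding and reliability constants) that the three pieces recombine to give the clean inequality $\|\mathbf e\|_\Omega^2\leq\eta_{\mathcal{OCP}}^2$ without any leftover constant in front.
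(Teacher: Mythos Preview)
Your proposal follows essentially the same route as the paper's proof: split $e_y$ and $e_z$ via the auxiliary problems $y(u_{T_\bullet})$, $z(y_{T_\bullet})$ and Lemma~\ref{upper-lower}; control the continuous perturbations $y-y(u_{T_\bullet})$ and $z-z(y_{T_\bullet})$ by $\|e_u\|$ through stability of the state and adjoint equations; split $e_u$ via $\tilde u$; and bound $\|u-\tilde u\|$ by means of Lemma~\ref{auxiliary_es} together with the variational inequality \eqref{var_au}. The paper does exactly this, introducing in addition the auxiliary pair $(\tilde y,\tilde z):=(y(\tilde u),z(\tilde u))$ to make the last step explicit.

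There is, however, a small misconception in your last paragraph. When you compare $y(\tilde u)z(\tilde u)$ to $y_{T_\bullet}z_{T_\bullet}$ by passing through $y(u_{T_\bullet})$ and $z(y_{T_\bullet})$, the ``extra'' terms that arise are bounded by $\|\tilde u-u_{T_\bullet}\|$, not by $\|e_u\|$: for instance, $\tilde y-y(u_{T_\bullet})$ solves a state equation whose right-hand side is governed by $(u_{T_\bullet}-\tilde u)y(u_{T_\bullet})$, so $\|\tilde y-y(u_{T_\bullet})\|_{\widetilde H^s}\leq C\|\tilde u-u_{T_\bullet}\|=C\,\eta_{\mathcal U}(u_{T_\bullet})$, and similarly for $\tilde z-z(y_{T_\bullet})$. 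Hence the bound on $\|u-\tilde u\|$ comes out directly as
\[
\|u-\tilde u\|\leq C\bigl(\eta_{\mathcal Y}(y_{T_\bullet})+\eta_{\mathcal Z}(z_{T_\bullet})+\eta_{\mathcal U}(u_{T_\bullet})\bigr),
\]
with no $\|e_u\|$ on the right-hand side and therefore no absorption argument involving $Q$ is required. The ``main obstacle'' you anticipate is not actually present; once you recognize that the residual terms are $\eta_{\mathcal U}$ rather than $\|e_u\|$, the chain closes immediately and the constants $C_{st},C_{adj},C_{ct}$ can be chosen afterwards.
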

\begin{proof}
By applying the triangle inequality and Lemma \ref{upper-lower}, we can obtain
 \begin{align}\label{y-yth}
\|y-y_{T_{\bullet}}\|^2_{\widetilde{H}^{s}(\Omega)}
&\leq C\|y-y(u_{T_{\bullet}})\|^2_{\widetilde{H}^{s}(\Omega)}+C\|y(u_{T_{\bullet}})-y_{T_{\bullet}}\|^2_{\widetilde{H}^{s}(\Omega)}\nonumber\\
&\leq C\|y-y(u_{T_{\bullet}})\|^2_{\widetilde{H}^{s}(\Omega)}+C\mathcal{\eta}^2_{\mathcal{Y}}(y_{T_{\bullet}}).
\end{align}
Moreover, we notice that $y-y(u_{T_{\bullet}})\in\widetilde{H}^{s}(\Omega)$ solves
\begin{align*}
a(y-y(u_{T_{\bullet}}),v)+(u(y-y(u_{T_{\bullet}})),v)=((u_{T_{\bullet}}-u)y(u_{T_{\bullet}}),v),\ \forall v\in \widetilde{H}^{s}(\Omega).
\end{align*}
It is deduced from (\ref{state_regularity2}) that
 \begin{align}\label{y-yuth}
\|y-y(u_{T_{\bullet}})\|_{\widetilde{H}^{s}(\Omega)}
&\leq C\|y(u_{T_{\bullet}})\|_{L^{\infty}(\Omega)}\ \|u_{T_{\bullet}}-u\| \nonumber\\
&\leq C\|f\|_{L^r(\Omega)}\|u_{T_{\bullet}}-u\|\leq C\|u_{T_{\bullet}}-u\|.
\end{align}
This estimate combined with (\ref{y-yth}) imply that
 \begin{align}\label{y-yh1}
\|y-y_{T_{\bullet}}\|_{\widetilde{H}^{s}(\Omega)}
\leq C\|u_{T_{\bullet}}-u\|+C\mathcal{\eta}^2_{\mathcal{Y}}(y_{T_{\bullet}}).
\end{align}

In a similar way, we can obtain that
  \begin{align}\label{z-zth}
\|z-z_{T_{\bullet}}\|^2_{\widetilde{H}^{s}(\Omega)}&\leq C\|z-z(y_{T_{\bullet}})\|^2_{\widetilde{H}^{s}(\Omega)}+C\|z(y_{T_{\bullet}})-z_{T_{\bullet}}\|^2_{\widetilde{H}^{s}(\Omega)}\nonumber\\
&\leq C\|z-z(y_{T_{\bullet}})\|_{\widetilde{H}^{s}(\Omega)}^2+C\mathcal{\eta}^2_{\mathcal{Z}}(z_{T_{\bullet}}).
\end{align}
To estimate $\|z-z(y_{T_{\bullet}})\|_{\widetilde{H}^{s}(\Omega)}$, we observe that $z-z(y_{T_{\bullet}})\in \widetilde{H}^{s}(\Omega)$ solves
  \begin{align*}
a(w,z-z(y_{T_{\bullet}}))+(u(z-z(y_{T_{\bullet}})),w)=((u_{T_{\bullet}}-u)z(y_{T_{\bullet}}),w)+(y-y_{T_{\bullet}},w)
\end{align*}
for all $w\in \widetilde{H}^{s}(\Omega).$
By assumption, $y_{T_{\bullet}}$ is uniformly bounded in $L^r(\Omega)\ (r>d/2s)$, we obtain $\|z(y_{T_{\bullet}})\|_{L^{\infty}(\Omega)}\leq C(\|y_{T_{\bullet}}\|_{L^{r}(\Omega)}+\|y_d\|_{L^r(\Omega)}) $.
Thus we have
 \begin{align}\label{z-zuth}
\|z-z(y_{T_{\bullet}})\|_{\widetilde{H}^{s}(\Omega)}&\leq \|z(y_{T_{\bullet}})\|_{L^{\infty}(\Omega)}\ \|u- u_{T_{\bullet}}\|+\|y- y_{T_{\bullet}}\|\nonumber\\
&\leq C\|u- u_{T_{\bullet}}\|+C\|y- y_{T_{\bullet}}\|_{\widetilde{H}^{s}(\Omega)}.
    \end{align}

The previous estimate and (\ref{z-zth}) allow us to deduce the a posteriori error estimate
\begin{align*}
\|z-z_{T_{\bullet}}\|_{\widetilde{H}^{s}(\Omega)}
\leq C\|u_{T_{\bullet}}-u\|+C\|y-y_{T_{\bullet}}\|_{\widetilde{H}^{s}(\Omega)}+C\mathcal{\eta}^2_{\mathcal{Z}}(z_{T_{\bullet}}).
     \end{align*}

The next goal is to estimate the error $\|u-u_{T_{\bullet}}\|$.  To accomplish
this task, we invoke the auxiliary variable $\tilde{u}=\Pi_{[a,b]}\left(\lambda^{-1}y_{T_{\bullet}}z_{T_{\bullet}}\right)$, then we have
 \begin{align}\label{u-uh0}
\|u-u_{T_{\bullet}}\|&\leq C\|u-\tilde{u}\|+C\|\tilde{u}-u_{T_{\bullet}}\|\nonumber\\
&\leq C\|u-\tilde{u}\|+C\mathcal{\eta}_{\mathcal{U}}(u_{T_{\bullet}}).
\end{align}
Setting $v =u$ in (\ref{j'u}) and $v =\tilde{u}$ in (\ref{var_au}), using (\ref{auxiliary_es}) and $j'(u)(\tilde{u}-u)\geq0$ we arrive at
 \begin{align*}
\frac{\mu }{2} \|u-\tilde{u}\|^2&\leq j'(\tilde{u})(\tilde{u}-u)-j'(u)(\tilde{u}-u)\leq j'(\tilde{u})(\tilde{u}-u)\\
&=(\alpha\tilde{u}-\tilde{z}\tilde{y},\tilde{u}-u)\leq (z_{T_{\bullet}}y_{T_{\bullet}}-\tilde{z}\tilde{y},\tilde{u}-u)\\
&=(z_{T_{\bullet}}y_{T_{\bullet}}-\tilde{z}y_{T_{\bullet}}+\tilde{z}y_{T_{\bullet}}-\tilde{z}\tilde{y},\tilde{u}-u)\\
&=((z_{T_{\bullet}}-\tilde{z})y_{T_{\bullet}}+\tilde{z}(y_{T_{\bullet}}-\tilde{y}),\tilde{u}-u).
\end{align*}
Here the auxiliary variables $\tilde{z},\ \tilde{y}\in \widetilde{H}^{s}(\Omega)$ are defined as follows
  \begin{eqnarray*}\left\{ \begin{aligned}
&a(\tilde{y},v)+(\tilde{u}\tilde{y},v)=(f,v), & \forall v\in \widetilde{H}^{s}(\Omega),\label{yhu}\\
&a(w,\tilde{z})+(\tilde{u}\tilde{z},w)=(\tilde{y}-y_{d},w), &\forall w\in \widetilde{H}^{s}(\Omega).\label{phy}
\end{aligned}\right.
 \end{eqnarray*}
 We now proceed to derive the previous inequality. We assume that the discrete solution $y_{T_{\bullet}}$ to problem (\ref{lisanstate}) are uniformly bounded in $L^{\infty}(\Omega)$, that is
  \begin{align*}
\|y_{T_{\bullet}}\|_{L^{\infty}(\Omega)}\leq C.
\end{align*}
In view of the Sobolev embedding $\widetilde{H}^{s}(\Omega)\hookrightarrow L^r(\Omega)$, we can  obtain
 \begin{align*}
 \|u-\tilde{u}\|&\leq \|z_{T_{\bullet}}-\tilde{z}\|\ \|y_{T_{\bullet}}\|_{L^{\infty}(\Omega)}+\|y_{T_{\bullet}}-\tilde{y}\|\ \|\tilde{z}\|_{L^{\infty}(\Omega)}\\
 &\leq C\|z_{T_{\bullet}}-\tilde{z}\|_{\widetilde{H}^{s}(\Omega)}+\|y_{T_{\bullet}}-\tilde{y}\|_{\widetilde{H}^{s}(\Omega)}\ (\|\tilde{y}\|_{L^{r}(\Omega)}+\|y_d\|_{L^{r}(\Omega)})\\
 &\leq C\|z_{T_{\bullet}}-\tilde{z}\|_{\widetilde{H}^{s}(\Omega)}+\|y_{T_{\bullet}}-\tilde{y}\|_{\widetilde{H}^{s}(\Omega)}\ (\|\tilde{y}\|_{\widetilde{H}^{s}(\Omega)}+\|y_d\|_{L^{r}(\Omega)})\\
 &\leq C\|z_{T_{\bullet}}-\tilde{z}\|_{\widetilde{H}^{s}(\Omega)}+\|y_{T_{\bullet}}-\tilde{y}\|_{\widetilde{H}^{s}(\Omega)}\ (\|f\|_{L^{r}(\Omega)}+\|y_d\|_{L^{r}(\Omega)}).
\end{align*}
We thus conclude that
\begin{align}\label{u-uaux}
 \|u-\tilde{u}\|\leq C\|z_{T_{\bullet}}-\tilde{z}\|_{\widetilde{H}^{s}(\Omega)}+C\|y_{T_{\bullet}}-\tilde{y}\|_{\widetilde{H}^{s}(\Omega)}.
\end{align}
To control the right hand side of (\ref{u-uaux}), we now proceed to estimate $\|y_{T_{\bullet}}-\tilde{y}\|_{\widetilde{H}^{s}(\Omega)}$. It is easy to see that
\begin{align*}
 \|y_{T_{\bullet}}-\tilde{y}\|_{\widetilde{H}^{s}(\Omega)}= \|y_{T_{\bullet}}-y(u_{T_{\bullet}})+y(u_{T_{\bullet}})-\tilde{y}\|_{\widetilde{H}^{s}(\Omega)}\leq C\mathcal{\eta}_{\mathcal{Y}}(y_{T_{\bullet}})+\|y(u_{T_{\bullet}})-\tilde{y}\|_{\widetilde{H}^{s}(\Omega)}.
\end{align*}
To control $\|y(u_{T_{\bullet}})-\tilde{y}\|_{\widetilde{H}^{s}(\Omega)}$, we observe that
\begin{align*}
 a(\tilde{y}-y(u_{T_{\bullet}}),v)+(\tilde{u}(\tilde{y}-y(u_{T_{\bullet}})),v)=(y(u_{T_{\bullet}})(u_{T_{\bullet}}-\tilde{u}),v),\ \forall v\in \widetilde{H}^{s}(\Omega).
\end{align*}
Then the following stability estimate holds
\begin{align*}
 \|\tilde{y}-y(u_{T_{\bullet}})\|_{\widetilde{H}^{s}(\Omega)}&\leq \|y(u_{T_{\bullet}})\|_{L^{\infty}(\Omega)}\ \|u_{T_{\bullet}}-\tilde{u}\|\\
  &\leq C\|f\|_{L^r(\Omega)}\ \|u_{T_{\bullet}}-\tilde{u}\|\leq C \mathcal{\eta}_{\mathcal{U}}(u_{T_{\bullet}}).
\end{align*}
We can conclude that
\begin{align*}
\|u-\tilde{u}\|\leq C\|z_{T_{\bullet}}-\tilde{z}\|_{\widetilde{H}^{s}(\Omega)}+C\mathcal{\eta}_{\mathcal{Y}}(y_{T_{\bullet}})+C\mathcal{\eta}_{\mathcal{U}}(u_{T_{\bullet}}).
\end{align*}
Finally, we control $\|z_{T_{\bullet}}-\tilde{z}\|_{\widetilde{H}^{s}(\Omega)}$. To do this, we invoke the auxiliary adjoint $z(y_{T_{\bullet}})$
\begin{align}\label{z-zaux}
 \|z_{T_{\bullet}}-\tilde{z}\|_{\widetilde{H}^{s}(\Omega)}= \|z_{T_{\bullet}}-z(y_{T_{\bullet}})+z(y_{T_{\bullet}})-\tilde{z}\|_{\widetilde{H}^{s}(\Omega)}\leq C\mathcal{\eta}_{\mathcal{Z}}(z_{T_{\bullet}})+\|z(y_{T_{\bullet}})-\tilde{z}\|_{\widetilde{H}^{s}(\Omega)}.
\end{align}
We notice that $\tilde{z}-z(y_{T_{\bullet}})$ solves the following problem
\begin{align*}
 a(w,\tilde{z}-z(y_{T_{\bullet}}))+(\tilde{u}(\tilde{z}-z(y_{T_{\bullet}})),w)=(z(y_{T_{\bullet}})(u_{T_{\bullet}}-\tilde{u}),w)+(\tilde{y}-y_{T_{\bullet}},w),\ \forall w\in \widetilde{H}^{s}(\Omega).
\end{align*}
Using a generalized H$\rm{\ddot{o}}$lder's inequality, we can get
\begin{align*}
 \|\tilde{z}-z(y_{T_{\bullet}})\|_{\widetilde{H}^{s}(\Omega)}&\leq \|z(y_{T_{\bullet}})\|_{L^{\infty}(\Omega)}\ \|u_{T_{\bullet}}-\tilde{u}\|+\|\tilde{y}-y_{T_{\bullet}}\|\\
 &\leq \|z(y_{T_{\bullet}})\|_{L^{\infty}(\Omega)}\ \|u_{T_{\bullet}}-\tilde{u}\|+\|\tilde{y}-y_{T_{\bullet}}\|_{\widetilde{H}^{s}(\Omega)}\\
  &\leq C(\|y_{T_{\bullet}}\|_{L^r(\Omega)}+\|y_d\|_{L^r(\Omega)})\ \|u_{T_{\bullet}}-\tilde{u}\|+\|\tilde{y}-y_{T_{\bullet}}\|_{\widetilde{H}^{s}(\Omega)}\\
  &\leq C \mathcal{\eta}_{\mathcal{U}}(u_{T_{\bullet}})+C \mathcal{\eta}_{\mathcal{Y}}(y_{T_{\bullet}}).
\end{align*}
Replacing this bound into (\ref{z-zaux}) and $\|u-\tilde{u}\|$ into (\ref{u-uh0}) yields
\begin{align*}
\|u-u_{T_{\bullet}}\|\leq C \mathcal{\eta}_{\mathcal{Z}}(z_{T_{\bullet}})+C\mathcal{\eta}_{\mathcal{Y}}(y_{T_{\bullet}})+C\mathcal{\eta}_{\mathcal{U}}(u_{T_{\bullet}}).
\end{align*}
Consequently,
\begin{align}\label{y-yh}
\|y-y_{T_{\bullet}}\|_{\widetilde{H}^{s}(\Omega)}\leq C \mathcal{\eta}_{\mathcal{Z}}(z_{T_{\bullet}})+C\mathcal{\eta}_{\mathcal{Y}}(y_{T_{\bullet}})+C\mathcal{\eta}_{\mathcal{U}}(u_{T_{\bullet}})
\end{align}
and
\begin{align*}
\|z-z_{T_{\bullet}}\|_{\widetilde{H}^{s}(\Omega)}\leq C \mathcal{\eta}_{\mathcal{Z}}(z_{T_{\bullet}})+C\mathcal{\eta}_{\mathcal{Y}}(y_{T_{\bullet}})+C\mathcal{\eta}_{\mathcal{U}}(u_{T_{\bullet}}),
\end{align*}
which completes the proof.
\end{proof}

\subsubsection{Efficiency of the error estimator $\mathcal{\eta}_{\mathcal{OCP}}$}
\begin{Assumption}\label{ass}
 Let $(y,z)$ and $(y(u_{T_{\bullet}}),z(y_{T_{\bullet}}))$ are the solutions of the optimal control problem of (\ref{lianxuy})-(\ref{lianxuz}) and (\ref{fuzhuy})-(\ref{fuzhuz}), respectively. The following stability estimates hold
  \begin{align*}
&\|y(u_{T_{\bullet}})-y\|_{H^{s+1/2-\epsilon}(\Omega)}\leq C\|u- u_{T_{\bullet}}\|,\\
&\|z(y_{T_{\bullet}})-z\|_{H^{s+1/2-\epsilon}(\Omega)}\leq C\|u-u_{T_{\bullet}}\| +C\|y-y_{T_{\bullet}}\|_{\widetilde{H}^{s}(\Omega)}.
    \end{align*}
\end{Assumption}

By Lemma \ref{regy}, for $s\in(1/2,1)$, the regularity of the solution of the optimal control problem over a bounded Lipschitz domain is $y,z\in H^{s+1/2-\epsilon}(\Omega)$. This has prompted the author to introduce Assumption \ref{ass} in order to analyze the efficiency estimate.
%For $s\in(0,1/2),$ regularity results and stability estimates for solutions $y$ and $z$ in a bounded Lipschitz domain remain open questions.
\begin{Remark}\label{Efficiency}
Under the Assumption  \ref{ass}, we have the error estimator $\mathcal{\eta}_{\mathcal{OCP}}$ satisfied the following lower bound for $h<h_0\ll1$
 \begin{eqnarray*}
\mathcal{\eta}_{\mathcal{OCP}}^2\leq C \|\mathbf{e}\|^2 + \sum\limits_{T\in T_{\bullet}}h_{\bullet}^{1-2\epsilon} \|y-y_{T_{\bullet}}\|^2_{H^{s+1/2-\epsilon}(\Omega^3_{\bullet}(T))}+\sum\limits_{T\in T_{\bullet}}h_{\bullet}^{1-2\epsilon} \|z-z_{T_{\bullet}}\|^2_{H^{s+1/2-\epsilon}(\Omega^3_{\bullet}(T))}.
     \end{eqnarray*}
\end{Remark}
\begin{proof}
According to Lemma \ref{upper-lower}, we first estimate $\|y(u_{T_{\bullet}})-y_{T_{\bullet}}\|_{\widetilde{H}^{s}(\Omega)}$. To do this, we apply a triangle inequality to obtain
  \begin{align*}
\|y(u_{T_{\bullet}})- y_{T_{\bullet}}\|_{\widetilde{H}^{s}(\Omega)}\leq C\|y(u_{T_{\bullet}})-y\|_{\widetilde{H}^{s}(\Omega)}+C\|y-y_{T_{\bullet}}\|_{\widetilde{H}^{s}(\Omega)}
    \end{align*}
%To bound $\|y(u_{T_{\bullet}})-y\|_{\widetilde{H}^{s}(\Omega)}$, we observe that $y(u_{T_{\bullet}})-y$ solves
%  \begin{align*}
%a(y(u_{T_{\bullet}})-y,v)+(u(y(u_{T_{\bullet}})-y),v)=(y(u_{T_{\bullet}})(u- u_{T_{\bullet}}),v),\ \forall v\in \widetilde{H}^{s}(\Omega).
%    \end{align*}
    By (\ref{y-yuth}), the following stability estimate holds
  \begin{align}\label{yut-y}
\|y(u_{T_{\bullet}})-y\|_{\widetilde{H}^{s}(\Omega)}\leq C\|u- u_{T_{\bullet}}\|.
    \end{align}
     Estimate (\ref{yut-y}) yields the desired bound
     \begin{align*}
\|y(u_{T_{\bullet}})- y_{T_{\bullet}}\|_{\widetilde{H}^{s}(\Omega)}
\leq
C \|u-u_{T_{\bullet}}\| +C\|y-y_{T_{\bullet}}\|_{\widetilde{H}^{s}(\Omega)}.
    \end{align*}
Using the Assumption \ref{ass}, we can deal with the term $\sum\limits_{T\in T_{\bullet}}h_{\bullet}^{1-2\epsilon}\|y(u_{T_{\bullet}})-y_{T_{\bullet}}\|_{H^{s+1/2-\epsilon}(\Omega^3_{\bullet}(T))}^2$ in a similar way
\begin{align*}
&\sum\limits_{T\in T_{\bullet}}h_{\bullet}^{1-2\epsilon}\|y(u_{T_{\bullet}})-y_{T_{\bullet}}\|_{H^{s+1/2-\epsilon}(\Omega^3_{\bullet}(T))}^2\\
&\leq C\sum\limits_{T\in T_{\bullet}}h_{\bullet}^{1-2\epsilon}\|y(u_{T_{\bullet}})-y\|_{H^{s+1/2-\epsilon}(\Omega^3_{\bullet}(T))}^2+C\sum\limits_{T\in T_{\bullet}}h_{\bullet}^{1-2\epsilon}\|y-y_{T_{\bullet}}\|_{H^{s+1/2-\epsilon}(\Omega^3_{\bullet}(T))}^2\\
&\leq CMh^{1-2\epsilon}\|y(u_{T_{\bullet}})-y\|_{H^{s+1/2-\epsilon}(\Omega)}^2+C\sum\limits_{T\in T_{\bullet}}h_{\bullet}^{1-2\epsilon}\|y-y_{T_{\bullet}}\|_{H^{s+1/2-\epsilon}(\Omega^3_{\bullet}(T))}^2\\
&\leq CMh^{1-2\epsilon} \|u-u_{T_{\bullet}}\|^2+C\sum\limits_{T\in T_{\bullet}}h_{\bullet}^{1-2\epsilon} \|y-y_{T_{\bullet}}\|^2_{H^{s+1/2-\epsilon}(\Omega^3_{\bullet}(T))},
\end{align*}
where $M$  stands for the maximum number of times an element $T$ can appear in the entire element patch $\Omega^3_{\bullet}(T)$.
On the basis of Lemma \ref{upper-lower} and the previous estimate, we immediately obtain the local efficiency of $\mathcal{\eta}^2_{\mathcal{Y}}(y_{T_{\bullet}})$
\begin{align*}
\mathcal{\eta}^2_{\mathcal{Y}}(y_{T_{\bullet}})
 &\leq{ C}_{\mathrm{yeff}}   \bigg\{ \|u- u_{T_{\bullet}}\|^2 + \| y -y_{T_{\bullet}}\|^2_{\widetilde{H}^{s}(\Omega)}
+\sum\limits_{T\in T_{\bullet}}h_{\bullet}^{1-2\epsilon} \|y-y_{T_{\bullet}}\|^2_{H^{s+1/2-\epsilon}(\Omega^3_{\bullet}(T))}\\
&\quad
+CMh^{1-2\epsilon}  \|u- u_{T_{\bullet}}\|^2\bigg\}
\end{align*}
 Assuming that the initial size of the mesh fulfills the following condition:  $Mh_0^{1-2\epsilon}\leq C.$ For  $h_0\ll1 $, we can obtain
\begin{align}\label{ey}
 \mathcal{\eta}^2_{\mathcal{Y}}(y_{T_{\bullet}})\leq C ( \|u- u_{T_{\bullet}}\|^2 + \|y - y_{T_{\bullet}}\|^2_{\widetilde{H}^{s}(\Omega)})  + C\sum\limits_{T\in T_{\bullet}}h_{\bullet}^{1-2\epsilon} \|y-y_{T_{\bullet}}\|^2_{H^{s+1/2-\epsilon}(\Omega^3_{\bullet}(T))}.
\end{align}
Next, we proceed with the investigation of the local efficiency properties concerning the indicator $ \mathcal{\eta}^2_{\mathcal{Z}}(z_{T_{\bullet}})$, which is defined in (\ref{ezdefin}).
According to Lemma \ref{upper-lower}, we first bound $\|z(y_{T_{\bullet}})-z_{T_{\bullet}}\|_{\widetilde{H}^{s}(\Omega)}$.
%To control $\|z(y_{T_{\bullet}})-z\|_{\widetilde{H}^{s}(\Omega)}$, we write the problem that $z(y_{T_{\bullet}})-z$ solves
% \begin{align*}
%a(w,z(y_{T_{\bullet}})-z)+(u(z(y_{T_{\bullet}})-z),w)=(z(y_{T_{\bullet}})(u- u_{T_{\bullet}}),w)+(y_{T_{\bullet}}-y,w),\ \forall w\in \widetilde{H}^{s}(\Omega).
%    \end{align*}
%By assumption, $y_{T_{\bullet}}$ is uniformly bounded in $L^r(\Omega)\ (r>d/2s)$, then we can get
%  \begin{align*}
%\|z(y_{T_{\bullet}})-z\|_{\widetilde{H}^{s}(\Omega)}&\leq \|z(y_{T_{\bullet}})\|_{L^{\infty}(\Omega)}\ \|u- u_{T_{\bullet}}\|+\|y- y_{T_{\bullet}}\|\\
%&\leq C(\|y_{T_{\bullet}}\|_{L^{r}(\Omega)}+\|y_d\|_{L^r(\Omega)})\ \|u- u_{T_{\bullet}}\|+\|y- y_{T_{\bullet}}\|_{\widetilde{H}^{s}(\Omega)}\\
%&\leq C\|u- u_{T_{\bullet}}\|+C\|y- y_{T_{\bullet}}\|_{\widetilde{H}^{s}(\Omega)}.
%    \end{align*}
   Utilizing (\ref{z-zuth}), we can thus arrive at
   \begin{align*}
 \|z(y_{T_{\bullet}})-z_{T_{\bullet}}\|_{\widetilde{H}^{s}(\Omega)}&\leq C\|z(y_{T_{\bullet}})-z\|_{\widetilde{H}^{s}(\Omega)}+C\|z-z_{T_{\bullet}}\|_{\widetilde{H}^{s}(\Omega)}\\
 &\leq  C\|u- u_{T_{\bullet}}\|+C\|y- y_{T_{\bullet}}\|_{\widetilde{H}^{s}(\Omega)}+C\|z-z_{T_{\bullet}}\|_{\widetilde{H}^{s}(\Omega)}.
\end{align*}
We now control the term $\sum\limits_{T\in T_{\bullet}}h_{\bullet}^{1-2\epsilon}\|z(y_{T_{\bullet}})-z_{T_{\bullet}}\|_{H^{s+1/2-\epsilon}(\Omega^3_{\bullet}(T))}^2$. To complete this goal, under the Assumption \ref{ass}, we use the similar arguments to proof
 \begin{align*}
&\sum\limits_{T\in T_{\bullet}}h_{\bullet}^{1-2\epsilon}\|z(y_{T_{\bullet}})-y_{T_{\bullet}}\|_{H^{s+1/2-\epsilon}(\Omega^3_{\bullet}(T))}^2\\
&\leq C\sum\limits_{T\in T_{\bullet}}h_{\bullet}^{1-2\epsilon}\|z(y_{T_{\bullet}})-z\|_{H^{s+1/2-\epsilon}(\Omega^3_{\bullet}(T))}^2+C\sum\limits_{T\in T_{\bullet}}h_{\bullet}^{1-2\epsilon}\|z-z_{T_{\bullet}}\|_{H^{s+1/2-\epsilon}(\Omega^3_{\bullet}(T))}^2\\
&\leq CMh^{1-2\epsilon}\|z(y_{T_{\bullet}})-z\|_{H^{s+1/2-\epsilon}(\Omega)}^2+C\sum\limits_{T\in T_{\bullet}}h_{\bullet}^{1-2\epsilon}\|z-z_{T_{\bullet}}\|_{H^{s+1/2-\epsilon}(\Omega^3_{\bullet}(T))}^2\\
&\leq CMh^{1-2\epsilon}(\|u-u_{T_{\bullet}}\| +\|y-y_{T_{\bullet}}\|_{\widetilde{H}^{s}(\Omega)})+C\sum\limits_{T\in T_{\bullet}}h_{\bullet}^{1-2\epsilon} \|z-z_{T_{\bullet}}\|^2_{H^{s+1/2-\epsilon}(\Omega^3_{\bullet}(T))}.
\end{align*}
Thus we have
\begin{align}\label{ez}
 \mathcal{\eta}^2_{\mathcal{Z}}(z_{T_{\bullet}})\leq C ( \|u- u_{T_{\bullet}}\|^2+ \|y - y_{T_{\bullet}}\|^2_{\widetilde{H}^{s}(\Omega)}+\|z-z_{T_{\bullet}}\|_{\widetilde{H}^{s}(\Omega)} ) + C\sum\limits_{T\in T_{\bullet}}h_{\bullet}^{1-2\epsilon} \|z-z_{T_{\bullet}}\|^2_{H^{s+1/2-\epsilon}(\Omega^3_{\bullet}(T))}.
\end{align}
Finally, we control $ \mathcal{\eta}^2_{\mathcal{Z}}(u_{T_{\bullet}})$. By assumption, $y_{T_{\bullet}}$ is uniformly bounded in $L^{\infty}(\Omega)$. To accomplish this task, we invoke the auxiliary control $u$, defined as the solution to the problem (\ref{weak_object})-(\ref{weak_state}), to rewrite $\|\tilde{u}-u_{T_{\bullet}}\|$ as follows
  \begin{align}\label{eu}
 \mathcal{\eta}^2_{\mathcal{U}}(u_{T_{\bullet}})=\|\tilde{u}-u_{T_{\bullet}}\|&\leq\|\tilde{u}-u\|+\|u-u_{T_{\bullet}}\|\nonumber\\
&=\|\Pi_{[a,b]}(\alpha^{-1}z_{T_{\bullet}}y_{T_{\bullet}})-\Pi_{[a,b]}(\alpha^{-1}zy)\|+\|u-u_{T_{\bullet}}\|\nonumber\\
&\leq C\alpha^{-1}\|z_{T_{\bullet}}y_{T_{\bullet}}-zy\|+\|u-u_{T_{\bullet}}\|\nonumber\\
&\leq C\alpha^{-1}(\|y_{T_{\bullet}}\|_{L^{\infty}(\Omega)}\ \|z_{T_{\bullet}}-z\|+\|z\|_{L^{\infty}(\Omega)}\ \|y_{T_{\bullet}}-y\|)+C\|u-u_{T_{\bullet}}\|\nonumber\\
&\leq C( \|z_{T_{\bullet}}-z\|_{\widetilde{H}^{s}(\Omega)}+ \|y_{T_{\bullet}}-y\|_{\widetilde{H}^{s}(\Omega)})+C\|u-u_{T_{\bullet}}\|.
    \end{align}
    The bound (\ref{eu}) combined (\ref{ey}) and (\ref{ez}) imply
   \begin{align*}
\mathcal{\eta}_{\mathcal{OCP}}^2\leq C \|\mathbf{e}\| + C\sum\limits_{T\in T_{\bullet}}h_{\bullet}^{1-2\epsilon} \|y-y_{T_{\bullet}}\|^2_{H^{s+1/2-\epsilon}(\Omega^3_{\bullet}(T))}+C\sum\limits_{T\in T_{\bullet}}h_{\bullet}^{1-2\epsilon} \|z-z_{T_{\bullet}}\|^2_{H^{s+1/2-\epsilon}(\Omega^3_{\bullet}(T))}.
\end{align*}
This concludes the proof.
\end{proof}
  \subsection{A posteriori error analysis for the semi discrete scheme}

In this section, we propose and analyze the a posteriori error estimator for the semi-discrete scheme discussed in Section 5.2. Unlike the estimators introduced in Section 5.1, this new estimator does not rely on the error estimator of the control variable but only on the error estimators of the state and adjoint variables, i.e.,
\begin{align}\label{semie}
\mathcal{\eta}_{\mathcal{OCP}}=\mathcal{\eta}_{\mathcal{Y}}(y_{T_{\bullet}})+\mathcal{\eta}_{\mathcal{Z}}(z_{T_{\bullet}}).
  \end{align}
  The error is defined as
$\mathbf{\bar{e}}=(e_y,e_z,e_u)^T$.
\begin{Theorem}\label{Reliability2}
 Let $(y,z,u)\in \widetilde{H}^{s+\sigma-\epsilon}(\Omega)\cap L^{\infty}(\Omega)\times \widetilde{H}^{s+\sigma-\epsilon}(\Omega)\cap L^{\infty}(\Omega)\times U_{ad}$  and $(y_{T_{\bullet}},z_{T_{\bullet}},u_{T_{\bullet}})\in \mathbb{V}_{T_{\bullet}}\times \mathbb{V}_{T_{\bullet}}\times U_{ad}$ be the solutions of the problem (\ref{lianxuy})-(\ref{var_con}) and (\ref{lisan}), respectively. Then the following upper bound of a posteriori error holds for $h<h_0\ll1$

$$\|\mathbf{e}\|^2\leq\mathcal{\eta}^2_{\mathcal{OCP}}.$$
Here $\mathcal{\eta}^2_{\mathcal{OCP}}=\sum\limits_{T\in T_{\bullet}}\mathcal{\eta}^2_{\mathcal{OCP},T},\ \mathcal{\eta}_{\mathcal{OCP},T}^2=C_{st}\mathcal{\eta}_{y,T}^2(y_{T_{\bullet}})+C_{adj} \mathcal{\eta}_{z,T}^2(z_{T_{\bullet}})$ and the constant $\sigma$ is given as in (\ref{opregy}).
\end{Theorem}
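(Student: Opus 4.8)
\textbf{Proof plan for Theorem \ref{Reliability2}.}
The plan is to mirror the structure of the reliability proof for the fully discrete scheme (Theorem \ref{Reliability}), with the key simplification that in the variational discretization the control satisfies the \emph{exact} projection formula $u_{T_{\bullet}}=\Pi_{[a,b]}(\lambda^{-1}y_{T_{\bullet}}z_{T_{\bullet}})$ over all of $U_{ad}$ (not just over $U_{ad,h}$). This means the auxiliary variable $\tilde u$ from Section 4.2.1 coincides with $u_{T_{\bullet}}$ itself, so the estimator term $\mathcal{\eta}_{\mathcal{U}}(u_{T_{\bullet}})=\|\tilde u-u_{T_{\bullet}}\|$ vanishes and need not appear in $\mathcal{\eta}_{\mathcal{OCP}}$. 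The first step is therefore to record that $u_{T_{\bullet}}$ fulfills the variational inequality $(\lambda u_{T_{\bullet}}-y_{T_{\bullet}}z_{T_{\bullet}},v-u_{T_{\bullet}})\geq 0$ for all $v\in U_{ad}$, which plays the role that (\ref{var_au}) played before.

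Next I would bound $\|u-u_{T_{\bullet}}\|$. Setting $v=u$ in the discrete variational inequality and $v=u_{T_{\bullet}}$ in (\ref{j'u}), and invoking Lemma \ref{auxiliary_es} (applicable since $u_{T_{\bullet}}=\tilde u$), one obtains
\begin{align*}
\tfrac{\mu}{2}\|u-u_{T_{\bullet}}\|^2\leq (j'(u_{T_{\bullet}})-j'(u))(u_{T_{\bullet}}-u)\leq (z_{T_{\bullet}}y_{T_{\bullet}}-\tilde z\tilde y,u_{T_{\bullet}}-u)=((z_{T_{\bullet}}-\tilde z)y_{T_{\bullet}}+\tilde z(y_{T_{\bullet}}-\tilde y),u_{T_{\bullet}}-u),
\end{align*}
where $\tilde y,\tilde z$ solve the continuous state/adjoint equations with coefficient $u_{T_{\bullet}}$; but since $u_{T_{\bullet}}=\tilde u$, these are exactly the auxiliary problems $y(u_{T_{\bullet}})$, $z(y_{T_{\bullet}})$ of (\ref{fuzhuy})-(\ref{fuzhuz}) up to the distinction between $y(u_{T_{\bullet}})$ and $z(y(u_{T_{\bullet}}))$ versus $z(y_{T_{\bullet}})$. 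Using the uniform $L^\infty$ bound on $y_{T_{\bullet}}$, the Sobolev embedding $\widetilde H^s(\Omega)\hookrightarrow L^r(\Omega)$ and Lemma \ref{state_regularity2} to control $\|\tilde z\|_{L^\infty(\Omega)}$, this reduces to $\|u-u_{T_{\bullet}}\|\leq C\|z_{T_{\bullet}}-\tilde z\|_{\widetilde H^s(\Omega)}+C\|y_{T_{\bullet}}-\tilde y\|_{\widetilde H^s(\Omega)}$, and then triangle inequalities through $y(u_{T_{\bullet}})$ and $z(y_{T_{\bullet}})$ together with Lemma \ref{upper-lower} give $\|u-u_{T_{\bullet}}\|\leq C\mathcal{\eta}_{\mathcal{Y}}(y_{T_{\bullet}})+C\mathcal{\eta}_{\mathcal{Z}}(z_{T_{\bullet}})$ — note the stability estimates $\|\tilde y-y(u_{T_{\bullet}})\|_{\widetilde H^s}\leq C\|u_{T_{\bullet}}-\tilde u\|=0$ collapse here, which is precisely what removes the control estimator.

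With $\|u-u_{T_{\bullet}}\|$ controlled, I would close the argument exactly as in Theorem \ref{Reliability}: estimate $\|y-y_{T_{\bullet}}\|_{\widetilde H^s(\Omega)}\leq C\|y-y(u_{T_{\bullet}})\|_{\widetilde H^s(\Omega)}+C\mathcal{\eta}_{\mathcal{Y}}(y_{T_{\bullet}})$, bound the first term by $C\|u-u_{T_{\bullet}}\|$ via (\ref{y-yuth}), and similarly $\|z-z_{T_{\bullet}}\|_{\widetilde H^s(\Omega)}\leq C\|u-u_{T_{\bullet}}\|+C\|y-y_{T_{\bullet}}\|_{\widetilde H^s(\Omega)}+C\mathcal{\eta}_{\mathcal{Z}}(z_{T_{\bullet}})$ via (\ref{z-zuth}); summing the three bounds and absorbing gives $\|\mathbf e\|_\Omega^2\leq C(\mathcal{\eta}_{\mathcal{Y}}^2(y_{T_{\bullet}})+\mathcal{\eta}_{\mathcal{Z}}^2(z_{T_{\bullet}}))$, which is the claim after renaming constants into $C_{st},C_{adj}$. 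The main obstacle — and the reason for the hypothesis $h<h_0\ll 1$ — is justifying the uniform $L^\infty$ (equivalently $L^{d/s}$) boundedness of the discrete state $y_{T_{\bullet}}$ that underlies Lemma \ref{auxiliary_es} and the $\|z(y_{T_{\bullet}})\|_{L^\infty}$ estimates; this is where the smallness of the initial mesh enters, and it should be handled exactly as in the fully discrete case, so the only genuinely new point is the bookkeeping observation that $\tilde u = u_{T_{\bullet}}$ kills the control-estimator contribution.
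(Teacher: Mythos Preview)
Your proposal is correct and is exactly the approach the paper takes: the paper's own proof of Theorem~\ref{Reliability2} consists of a single sentence instructing the reader to rerun the reliability argument of the fully discrete scheme with the identification $\tilde u = u_{T_{\bullet}}$, which is precisely the bookkeeping observation you build your plan around. Your expansion of the details (in particular noting that $\tilde y = y(u_{T_{\bullet}})$ and that the terms involving $\|u_{T_{\bullet}}-\tilde u\|$ collapse, thereby eliminating $\mathcal{\eta}_{\mathcal{U}}$) faithfully unpacks what the paper leaves implicit.
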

\begin{proof}
We will prove the statement by closely following the proof developed in Theorem \ref{Efficiency}, where we used $\tilde{u}=u_{T_{\bullet}}$. For the sake of brevity, we omit the detailed steps and calculations.
\end{proof}
\begin{Remark}\label{Efficiency2}

For the proof of the efficiency estimate, it is also necessary to work under the Assumption \ref{ass}, following a similar procedure as in the proof outlined in the Remark \ref{Efficiency}. Let $\tilde{u}=u_{T_{\bullet}}$,
suppose that $(y,z,u)$ and $(y_{T_{\bullet}},z_{T_{\bullet}},u_{T_{\bullet}})$ are the solutions of the optimal control problem of (\ref{lianxuy})-(\ref{var_con}) and (\ref{lisan}), respectively. Then we have the error estimator $\mathcal{\eta}_{\mathcal{OCP}}$ satisfied the following lower bound
 \begin{eqnarray*}
\mathcal{\eta}_{\mathcal{OCP}}^2\leq C \|\mathbf{e}\|^2 + \sum\limits_{T\in T_{\bullet}}h_{\bullet}^{1-2\epsilon} \|y-y_{T_{\bullet}}\|^2_{H^{s+1/2-\epsilon}(\Omega^3_{\bullet}(T))}+\sum\limits_{T\in T_{\bullet}}h_{\bullet}^{1-2\epsilon} \|z-z_{T_{\bullet}}\|^2_{H^{s+1/2-\epsilon}(\Omega^3_{\bullet}(T))}.
     \end{eqnarray*}

\end{Remark}

\section{ Numerical\ results}
In this section, we use the following standard adaptive algorithm to optimize the convergence rate for numerical examples, which employs D$\rm{\ddot{o}}$rfler's marking criterion to designate elements for refinement.\\

\begin{center}\begin{tikzpicture}[every node/.style={rectangle, draw}]
    \node (A) at (-3,0) {\(\displaystyle \rm{Solve}\)};
        \node (B) at (0,0) {\(\displaystyle \rm{Estimate}\)};
    \node (C) at (3,0) {\(\displaystyle \rm{Mark}\)};
    \node (D) at (6,0) {\(\displaystyle \rm{Refine}\)};
    \draw[->] (A) -- (B);
    \draw[->] (B) -- (C);
    \draw[->] (C) -- (D);
\end{tikzpicture}
\end{center}

We use the residual error estimator $\mathcal{\eta}_{\mathcal{OCP}}$ to drive the adaptive mesh refinement and introduce the effectivity index $E_{\rm{eff}}:=\mathcal{\eta}_{\mathcal{OCP}}/\|\mathbf{e}\|_{\Omega}$.

\begin{algorithm}
\caption{Design\ of\ the\ AFEMs:}
 $\star\ \rm{Solve1}$: Initial mesh ${\mathcal{T}_{{\bullet}_{0}}}$, desired state $y_d$, external source $f$, constraints $a$ and $b$, regularization parameter $\lambda$.  Set $k=0$ and use the projection gradient algorithm to solve $(y_{{\mathcal{T}_{{\bullet}_k}}}, z_{{\mathcal{T}_{{\bullet}_k}}}, u_{{\mathcal{T}_{{\bullet}_k}}})\in\mathbb{V}_{T_{\bullet k}}\times \mathbb{V}_{T_{\bullet k}}\times U_{ad,h}(U_{ad}).$

 $\star\ \rm{Estimate}$: Compute the local error indicator $\mathcal{\eta}_{\mathcal{OCP},T}.$

 $\star\ \rm{Mark}$: Given a parameter $0\leq\theta\leq1$; Construct a minimal subset $\mathcal{M}_k\subset{\mathcal{T}_{{\bullet}_k}}$ such that
  $\sum\limits_{T\in\mathcal{M}_k}\mathcal{\eta}_{\mathcal{OCP},T}\geq \theta\mathcal{\eta}_{\mathcal{OCP}}.$ Mark all the elements in $\mathcal{M}_k.$

 $\star\ \rm{Refine}$: We bisect all the elements $T\in\mathcal{T}_{{\bullet}_k}$ that are contained in
$\mathcal{M}_k$ with the newest-vertex bisection method and create a new mesh
$\mathcal{T}_{{\bullet}_{k+1}}$. Define
$\mathcal{M}_{k+1}=\rm{refine}(\mathcal{M}_k).$

$\star\ \rm{Solve2}$: Construct the finite element space $\mathcal{T}_{{\bullet}_{k+1}}$ and solve $(y_{{\mathcal{T}_{{\bullet}_{k+1}}}}, z_{{\mathcal{T}_{{\bullet}_{k+1}}}}, u_{{\mathcal{T}_{{\bullet}_{k+1}}}})\in\mathbb{V}_{T_{\bullet k+1}}\times \mathbb{V}_{T_{\bullet k+1}}\times U_{ad,h}(U_{ad})$.

$\star\ \rm{End\ loop}$: If  $\mathcal{\eta}_{\mathcal{OCP},T}\leq \mathrm{Tol}_{\mathrm{space}}$, otherwise, set $k=k+1$ and go to Step $ \rm{Estimate}$.
\end{algorithm}

\begin{algorithm}
\caption{Projection gradient algorithm}
 Start with the mesh $\mathcal{T}_{{\bullet}_{t}}$ with mesh size $h_{\bullet}$.
Given the initial value $ u^0_{\mathcal{T}_{{\bullet}_{t}}}$,  and a tolerance
   $\mathrm{Tol}_{\mathrm{space}}>0$.

$\mathbf{While}$\ $error>\mathrm{Tol}_{\mathrm{space}}$

   $\mathbf{1.}$ \
   Solving the state equation in  (\ref{lisan}) to get state variable $y_{\mathcal{T}_{{\bullet}_{t}}}$;

  $\mathbf{2.}$  \ Solving the adjoint state equation in  (\ref{lisan})  to obtain adjoint state variable $z_{\mathcal{T}_{{\bullet}_{t}}}$;

  $\mathbf{3.}$ \ Compute the associted subgradient and control variable\\
$[\rm{Full\ discrete}]:\ u^{new}_{\mathcal{T}_{{\bullet}_{t}}}=\min\{ b, \max\{a,(\lambda|T|)^{-1}\sum\limits_{T\in\mathcal{T}_{\bullet_{t}}}\int_{T}y_{\mathcal{T}_{{\bullet}_{t}}}z_{\mathcal{T}_{{\bullet}_{t}}}dx \} \}$\\
$[\rm{semi-discrete}]:\ u^{new}_{\mathcal{T}_{{\bullet}_{t}}}=\min\{ b, \max\{a,\lambda^{-1}y_{\mathcal{T}_{{\bullet}_{t}}}z_{\mathcal{T}_{{\bullet}_{t}}} \} \}.$\\
$\mathbf{4.}$  Calculate the error:
 $ error=norm(u^0_{\mathcal{T}_{{\bullet}_{t}}}-u^{new}_{\mathcal{T}_{{\bullet}_{t}}},inf).$

$\mathbf{5.}$\ Update the control variable $u^0_{\mathcal{T}_{{\bullet}_{t}}}=u^{new}_{\mathcal{T}_{{\bullet}_{t}}}.$

$\mathbf{End\ While}$
\end{algorithm}

We now present two numerical examples that demonstrate how completely discrete and semi-discrete methods perform. In the first example, we consider a problem with an exact solution. Using the projection formula (\ref{u}), the state, and adjoint equations  (\ref{lianxuy})-(\ref{lianxuz}), we can compute the exact optimal control variable, the desired state $y_d$, and the source term $f$.
In the second example, we consider a problem where the solution in a square domain is unknown. We specify the desired state $y_d$ and the source term $f$.
\begin{Example}\label{exm1}
We set $\Omega=B(0,1)$, $a=0.4$, $b=1.5$, $\lambda=0.1$ and the exact solutions are as follows:
\begin{align*}
y=z=\frac{2^{-2s}(1-|x|^2)^{s}}{\Gamma(1+s)^2},\ u=\Pi_{[a,b]}(\lambda^{-1}zy).
\end{align*}
\end{Example}
In the context of variational discretization, Figure \ref{mesh1} presents the initial mesh and the final refinement mesh when parameterized with $s=0.25$ and $\theta=0.7$. It is evident that the mesh is concentrated along the boundaries where singularities are located. Numerical solutions of the state and control variables for $s=0.75$ and $\theta=0.5$ are depicted in Figure \ref{num1}.
 \begin{figure}[!htbp]
\centering
%\flushleft
\label{2a}
\includegraphics[width=6.5cm]{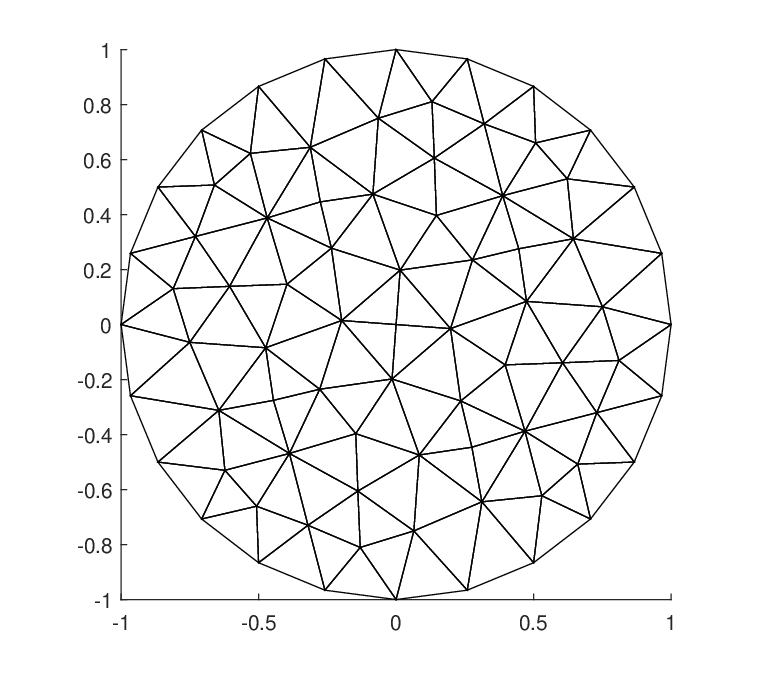}
\hspace{-0.01mm}
\label{2b}
\includegraphics[width=6.5cm]{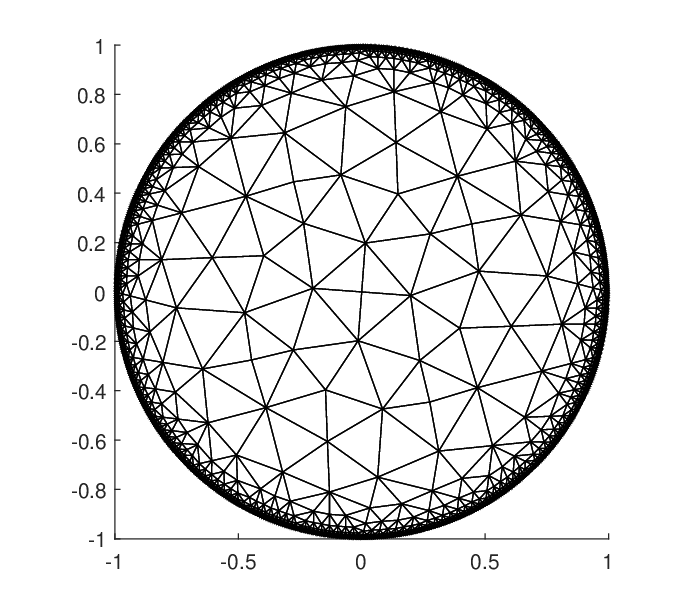}
%\hspace{-1mm}
%\label{2c}
%\includegraphics[width=4cm]{control_0.25.eps}
%\hspace{-5mm}
\caption{The initial mesh (left)  and the final refinement mesh (right) with $s=0.25$ on the circle.}
\label{mesh1}
\end{figure}
\begin{figure}[!htbp]
\centering
%\flushleft
\label{2a}
\includegraphics[width=6.5cm]{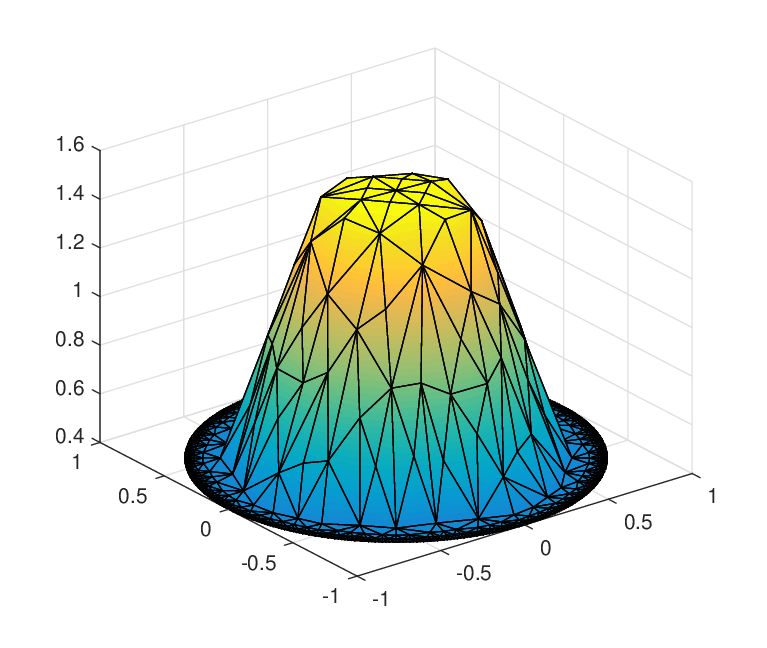}
\hspace{-0.01mm}
\label{2b}
\includegraphics[width=6.5cm]{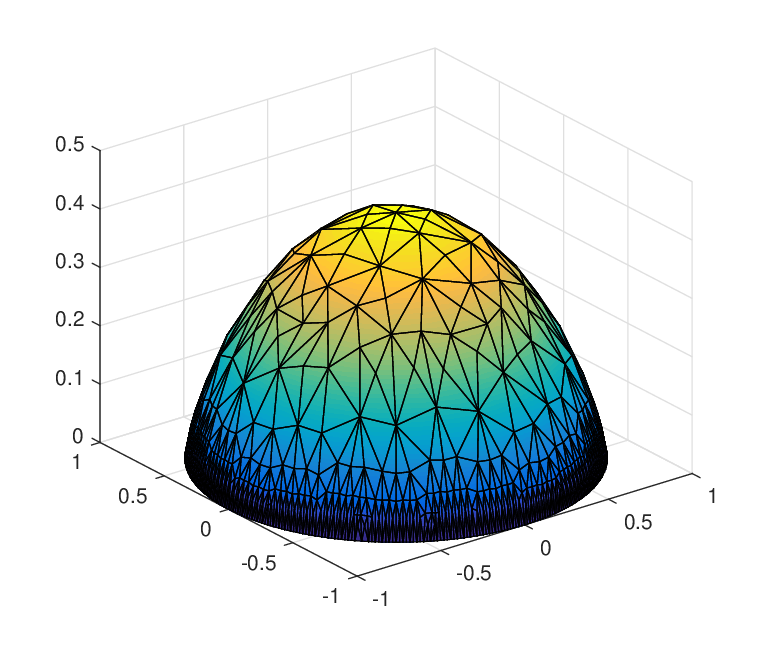}
%\hspace{-1mm}
%\label{2c}
%\includegraphics[width=4cm]{control_0.25.eps}
%\hspace{-5mm}
\caption{The profiles of the numerically computed  control and state with $s=0.75,\ \theta=0.5$.}
\label{num1}
\end{figure}

To elucidate the effectiveness of the adaptive finite element method in solving optimal control problems, Figure \ref{error1} (left) illustrates the convergence behavior of the errors and the effectivity indices  in the optimal control, state, and adjoint state when  $s=0.75$ and $\theta=0.5$.  In Figure \ref{error1} (right), we showcase the convergence behavior of the error indicators $\mathcal{\eta}_{\mathcal{OCP}}$ and the error estimators $\mathcal{\eta}_{\mathcal{Y}}(y_{\mathcal{T}_{\bullet}}),\ \mathcal{\eta}_{\mathcal{Z}}(z_{\mathcal{T}_{\bullet}})$.  Under uniform refinement, the convergence rate of the error approximates a straight line with a slope of $-1/4$. However, under adaptive refinement, the convergence rate of the state and adjoint errors in the $\|\cdot\|_{\widetilde{H}^{s}(\Omega)}$ norm is $N^{-1/2}$, and the convergence order of the control error in the $L^2$ norm can reach $N^{-1}$, consistent with the desired optimal convergence rate.
  For $s=0.25, \theta=0.7$, Figure \ref{error2} illustrates the convergence of errors and effectivity indices in the optimal control, state, and adjoint state on both adaptive and uniform refinement grids and the convergence of error indicators and error estimators.

 \begin{figure}[!htbp]
\centering
%\flushleft
\label{2a}
\includegraphics[width=6.6cm]{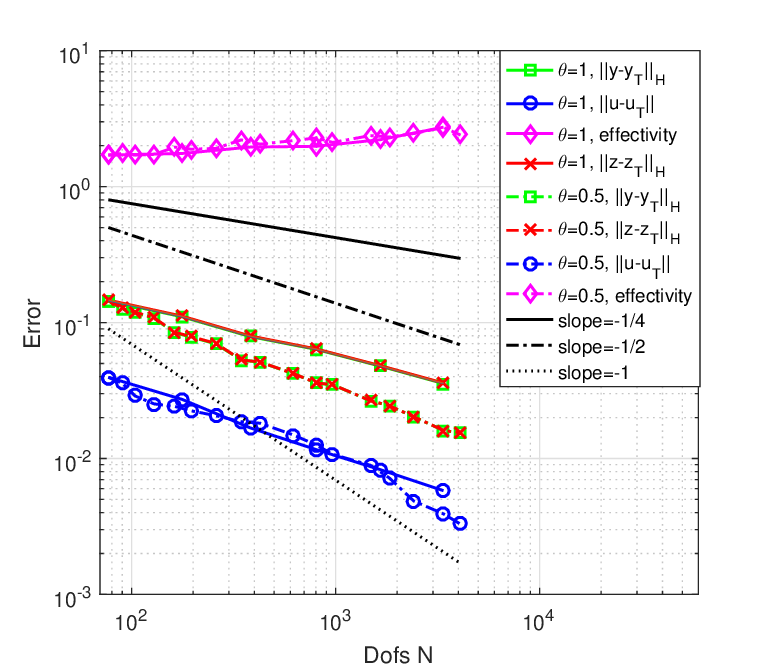}
\hspace{-0.01mm}
\label{2b}
\includegraphics[width=6.5cm]{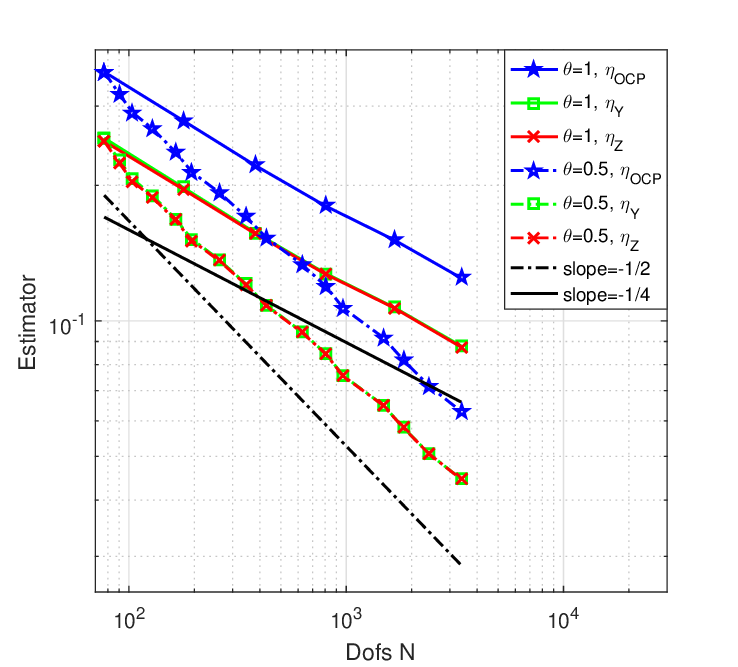}
%\hspace{-1mm}
%\label{2c}
%\includegraphics[width=4cm]{control_0.25.eps}
%\hspace{-5mm}
\caption{The convergent behaviors of the errors and effectivity indices (left) and the convergent behaviors of the error indicators and error estimators (right) with $s=0.75$.}
\label{error1}
\end{figure}
\begin{figure}[!htbp]
\centering
%\flushleft
\label{2a}
\includegraphics[width=6.5cm]{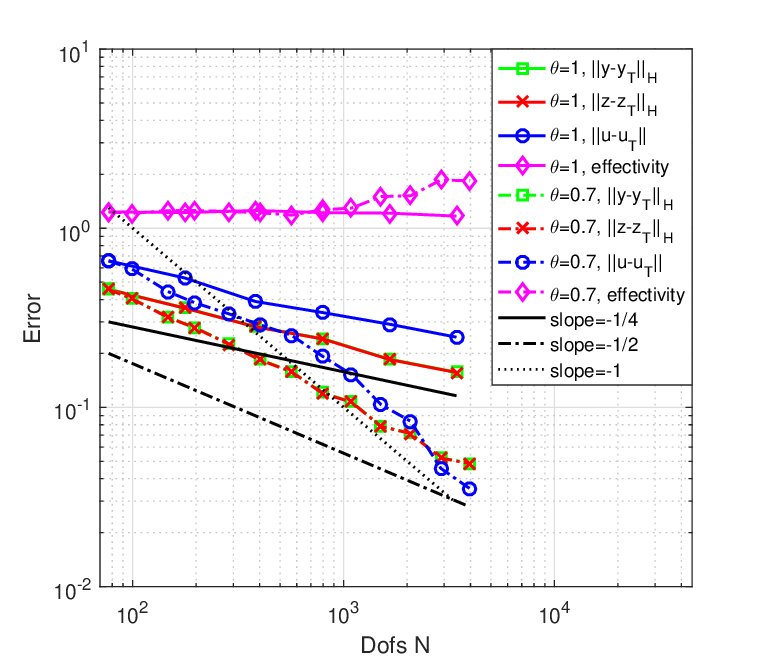}
\hspace{-0.01mm}
\label{2b}
\includegraphics[width=6.5cm]{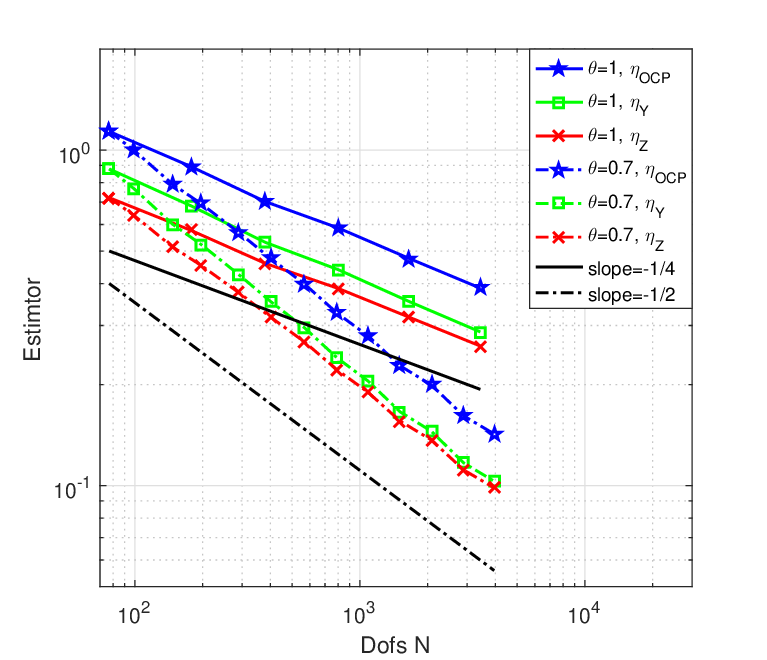}
%\hspace{-1mm}
%\label{2c}
%\includegraphics[width=4cm]{control_0.25.eps}
%\hspace{-5mm}
\caption{The convergent behaviors of the errors and effectivity indices (left) and the convergent behaviors of the error indicators and error estimators (right) with $s=0.25$ respectively.}
\label{error2}
\end{figure}

In the context of full discretization,  Figure \ref{mesh2} shows the refinement mesh resulting from 15 adaptive iterations and the profile of the numerically computed control for $s=0.75$ and $\theta=0.5$.  The numerical solution plots for the control variables show that the interior of the region presents piecewise constants, thus leading to significant region discontinuities. After mesh refinement at boundary singularities, the interior of the mesh is further encrypted. To investigate the role of error estimators, we conducted 10 iterations of grid refinement using control variable error estimators. Figure \ref{control} shows that the refined regions indeed lie in the interior, indicating that the boundary refinement in Figure \ref{mesh2} (left) is caused by the singularity of the fractional-order operator, while the interior refinement is due to the discontinuity of the control variable.

\begin{figure}[!htbp]
\centering
%\flushleft
\label{2a}
\includegraphics[width=6.3cm]{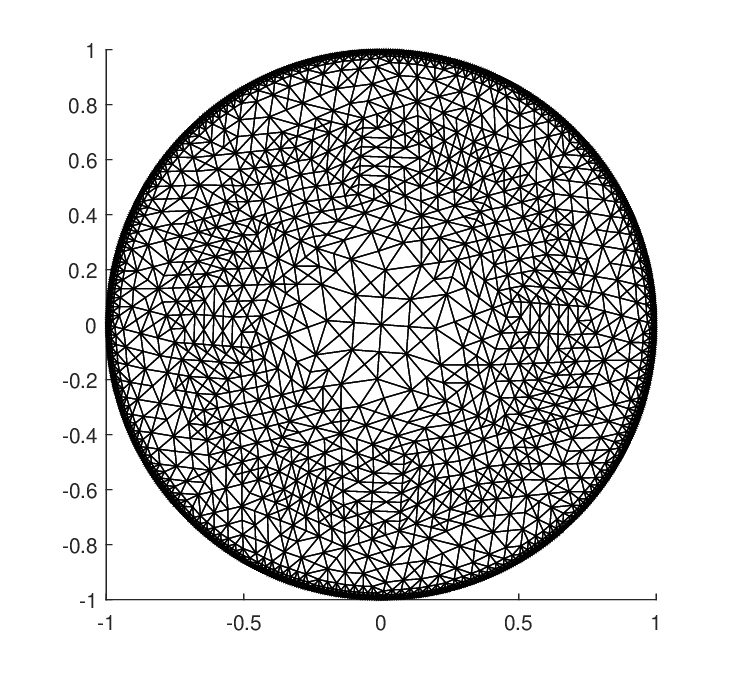}
\hspace{-0.01mm}
\label{2b}
\includegraphics[width=6.6cm]{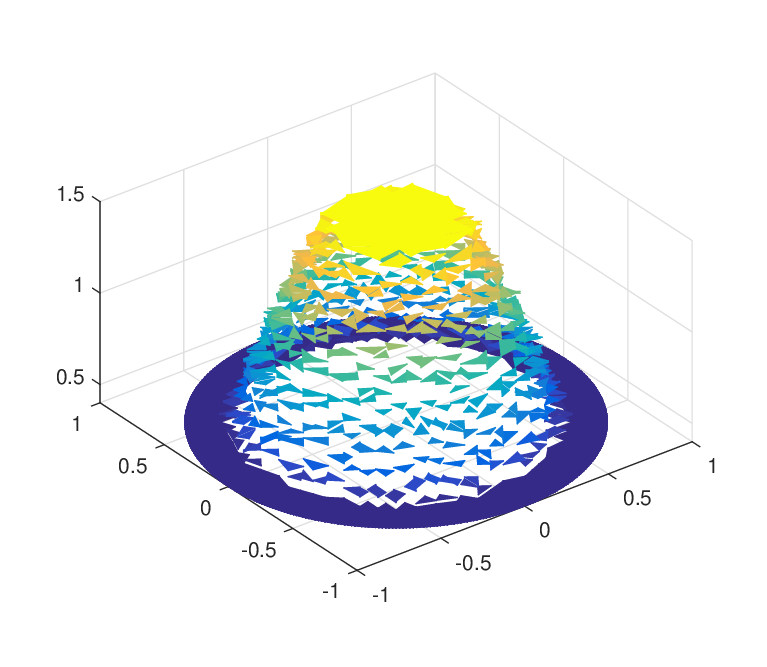}
%\hspace{-1mm}
%\label{2c}
%\includegraphics[width=4cm]{control_0.25.eps}
%\hspace{-5mm}
\caption{The mesh after 15 adaptive steps with $s=0.75$ and the profile of the numerically computed control.}
\label{mesh2}
\end{figure}
\begin{figure}[!htbp]
\centering
%\flushleft
\label{2a}
\includegraphics[width=6.5cm]{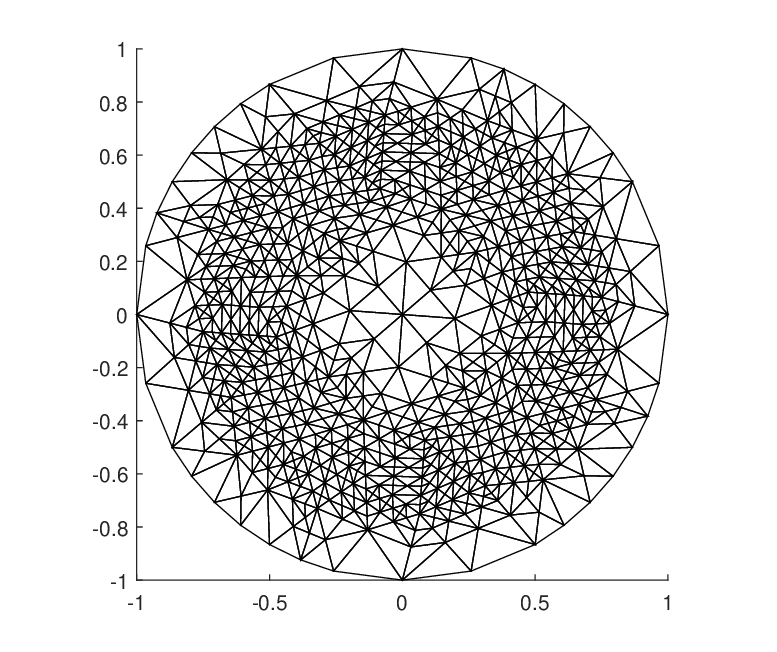}

%\hspace{-1mm}
%\label{2c}
%\includegraphics[width=4cm]{control_0.25.eps}
%\hspace{-5mm}
\caption{The mesh after 10 adaptive refinements of the control error estimator with $s=0.75$.}
\label{control}
\end{figure}
Likewise, the convergence behavior of errors, error estimators on both adaptive and uniform refinement grids is presented in Figure \ref{error3}, demonstrating the attainment of the optimal convergence rate.  Since $u_{\mathcal{T}_{\bullet}}$ is implicitly discretized with piecewise constant functions, the state, adjoint and control variables have convergence order $N^{-1/2}$.

\begin{figure}[!htbp]
\centering
%\flushleft
\label{2a}
\includegraphics[width=6.5cm]{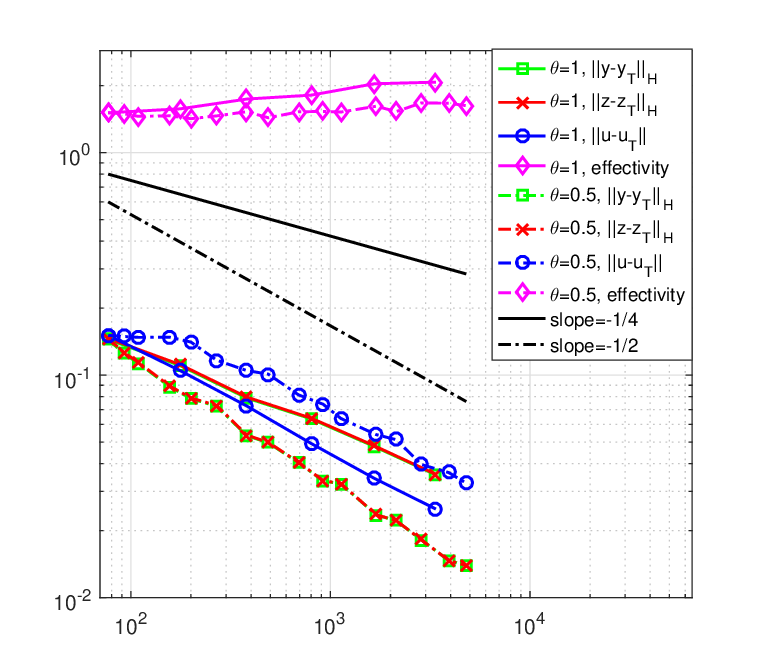}
\hspace{-0.01mm}
\label{2b}
\includegraphics[width=6.5cm]{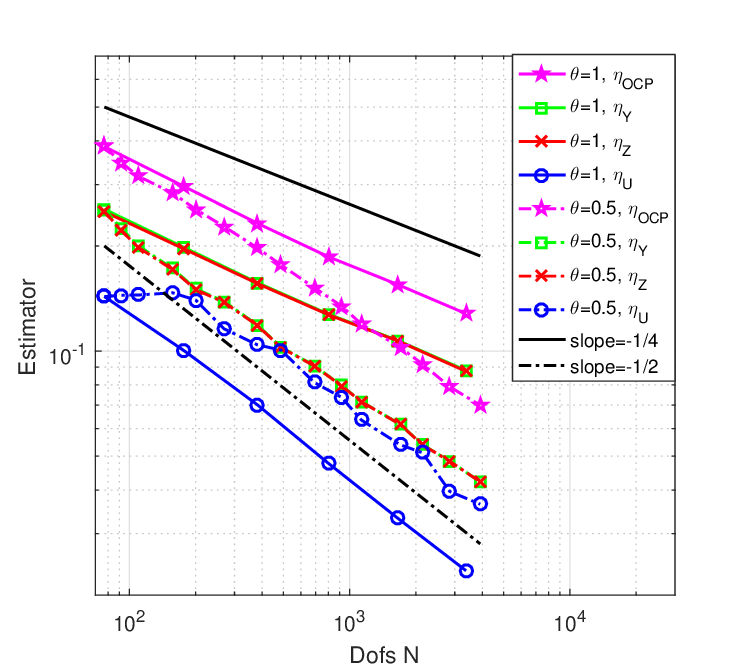}
%\hspace{-1mm}
%\label{2c}
%\includegraphics[width=4cm]{control_0.25.eps}
%\hspace{-5mm}
\caption{The convergent behaviors of  the errors, error and effectivity indices with $s=0.75$.}
\label{error3}
\end{figure}
The refinement mesh resulting from 13 adaptive iterations and the profile of the numerically computed control for $s=0.25$ and $\theta=0.7$ are provided in Figure \ref{num2}. In this case, the interior of the numerical solution of the control variable is all controlled by the upper bound of the constrained set, and only the boundary is the piecewise  constant, which means that the numerical solution is very smooth inside the region. At the same time, we know that the solution is singular on the boundary, so the mesh encryption will always be refined at the boundary and not inside, which is consistent with results in Figure \ref{num2}.

In Figure \ref{error4}, the convergence orders of  errors, error indicators and error estimators are presented for different $\theta$ values. Because the discontinuity and singularity of the control variable are on the boundary, as the mesh is continuously refined on the boundary, it will achieve the same result as the variational discretization. Therefore, the error of the control variable can reaches $N^{-1}$, as shown in Figure \ref{error4}.

\begin{figure}[!htbp]
\centering
%\flushleft
\label{2a}
\includegraphics[width=6.5cm]{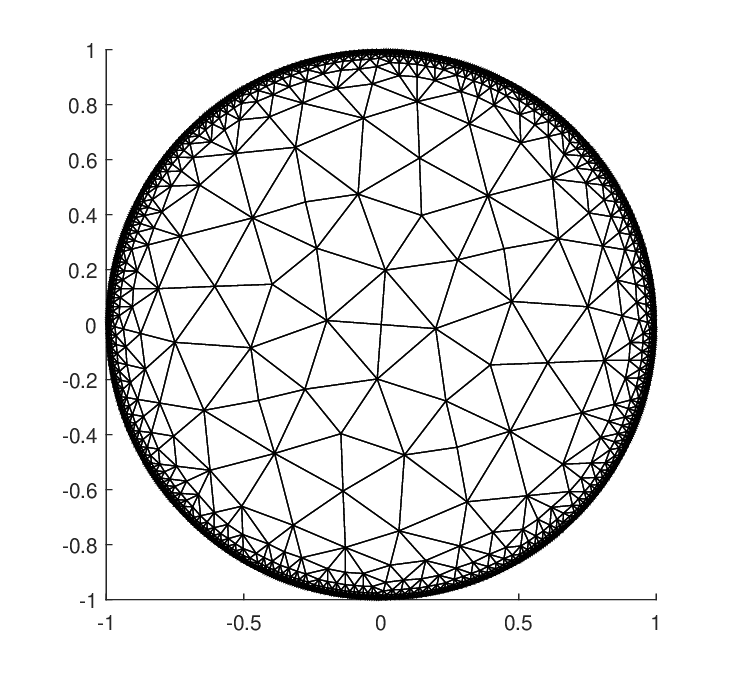}
\hspace{-0.01mm}
\label{2b}
\includegraphics[width=6.6cm]{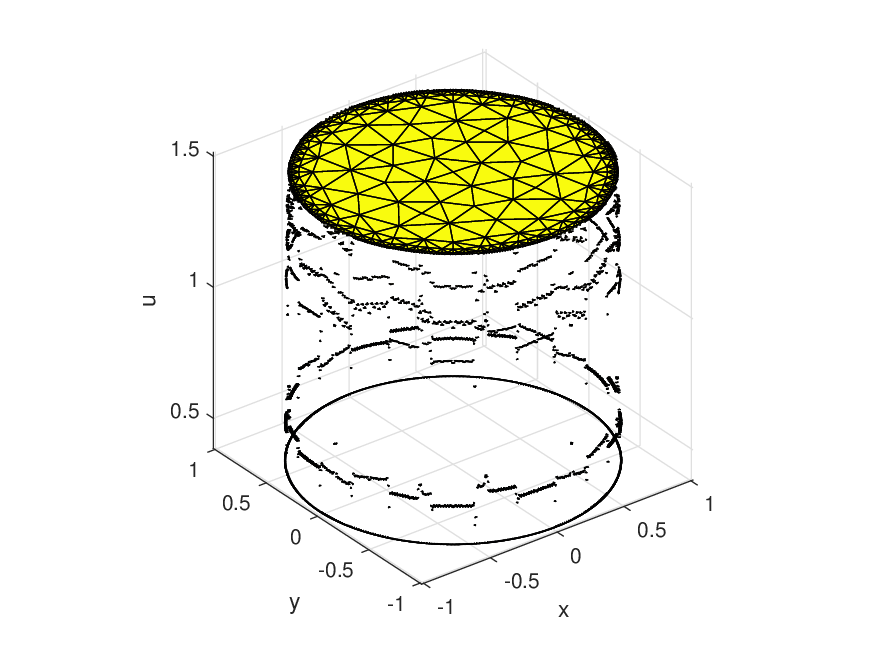}
%\hspace{-1mm}
%\label{2c}
%\includegraphics[width=4cm]{control_0.25.eps}
%\hspace{-5mm}
\caption{The mesh after 13 adaptive steps with $s=0.25$ and the profile of the numerically computed control.}
\label{num2}
\end{figure}

\begin{figure}[!htbp]
\centering
%\flushleft
\label{2a}
\includegraphics[width=6.5cm]{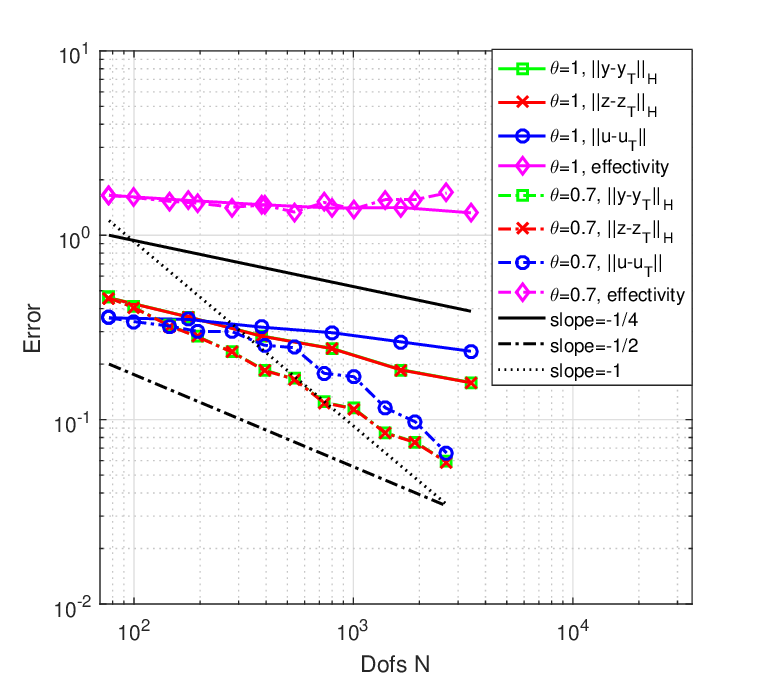}
\hspace{-0.01mm}
\label{2b}
\includegraphics[width=6.5cm]{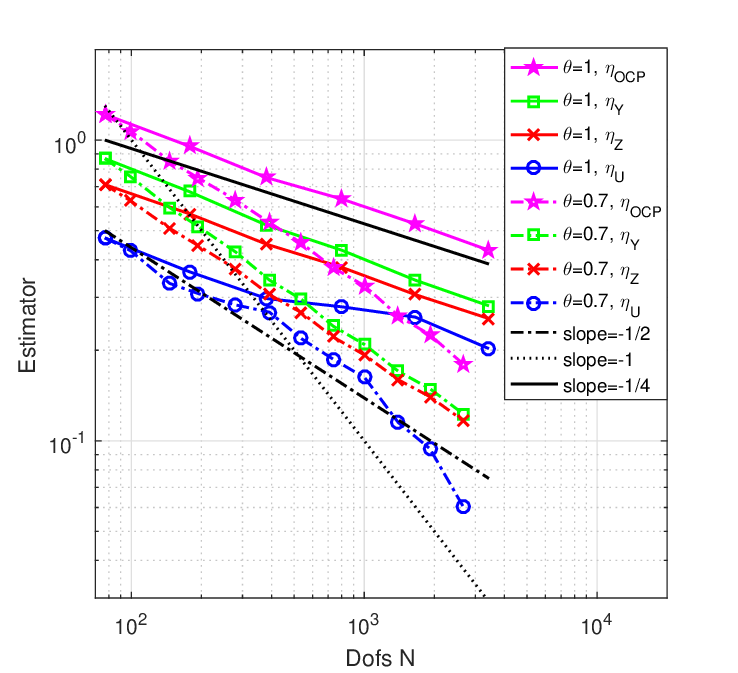}
%\hspace{-1mm}
%\label{2c}
%\includegraphics[width=4cm]{control_0.25.eps}
%\hspace{-5mm}
\caption{The convergent behaviors of the errors, error indicators and error estimators with $s=0.25$.}
\label{error4}
\end{figure}

When considering the parameter values  $a=0.4, b=0.7, \lambda=1,$ $s=0.25$ and $\theta=0.7$, as illustrated in Figure \ref{fullab}, it becomes evident that beyond the adaptive refinement of the boundary, an interior refinement process is initiated. This refinement is necessitated by the presence of piecewise constant variations within the numerical solution of the control variable. Notably, the disparity between the outcomes presented in Figure \ref{fullabestimator} and Figure \ref{error4} arises from the influence of this piecewise constant component on the convergence behavior of the control variable. Specifically, the order of convergence is no longer characterized as $N^{-1}$, but rather aligns with that of the state and adjoint variables, yielding $N^{-1/2}$.

\begin{figure}[!htbp]
\centering
%\flushleft
\label{2a}
\includegraphics[width=6.5cm]{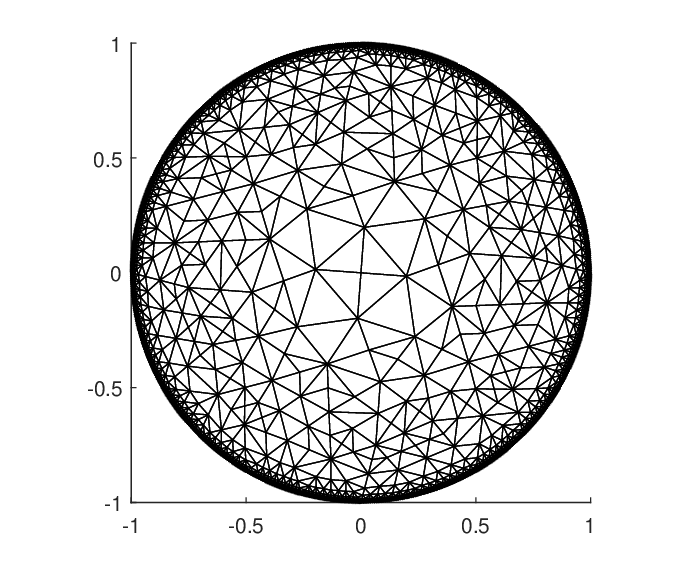}
\hspace{-0.01mm}
\label{2b}
\includegraphics[width=6.6cm]{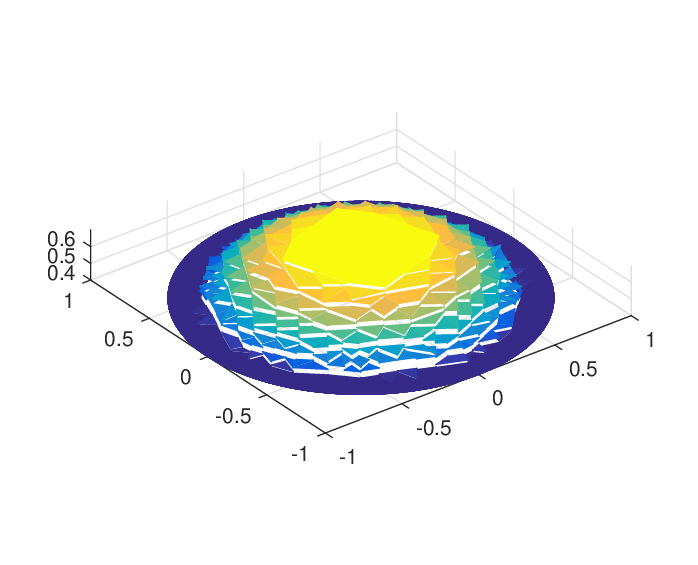}
%\hspace{-1mm}
%\label{2c}
%\includegraphics[width=4cm]{control_0.25.eps}
%\hspace{-5mm}
\caption{The mesh after 13 adaptive steps with $s=0.25$ and the profile of the numerically computed control.}
\label{fullab}
\end{figure}

\begin{figure}[!htbp]
\centering
%\flushleft
\label{2a}
\includegraphics[width=6.5cm]{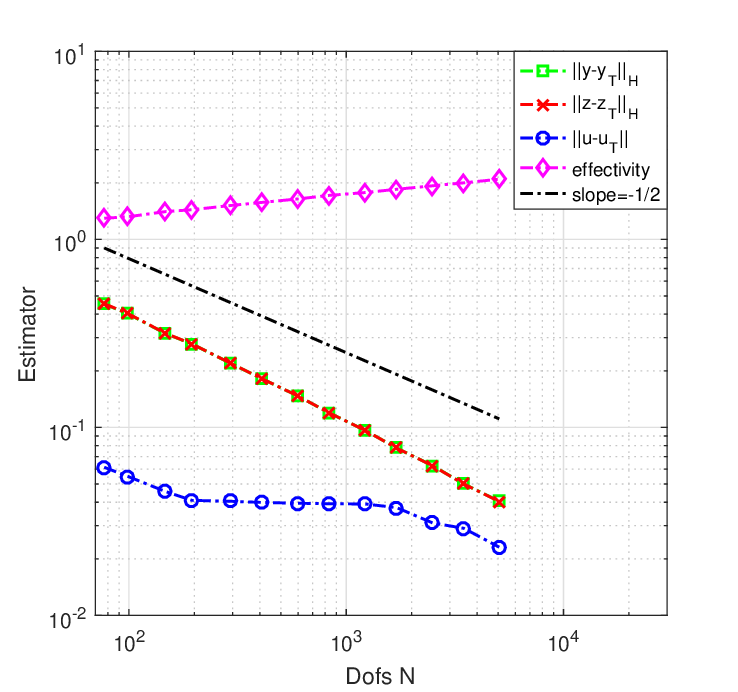}
\hspace{-0.01mm}
\label{2b}
\includegraphics[width=6.5cm]{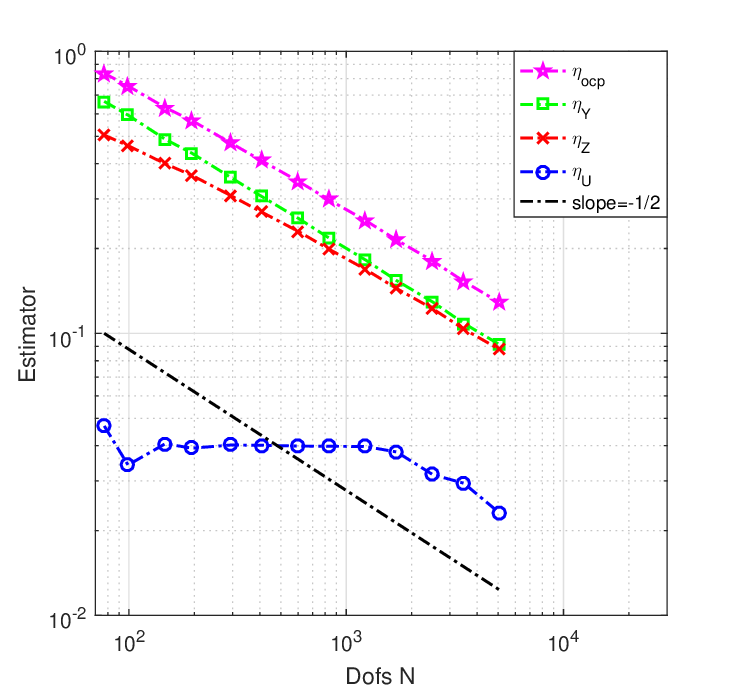}
%\hspace{-1mm}
%\label{2c}
%\includegraphics[width=4cm]{control_0.25.eps}
%\hspace{-5mm}
\caption{The convergent behaviors of the errors, error indicator and error estimators.}
\label{fullabestimator}
\end{figure}

\begin{Example}\label{exm:2}
In the second example we consider an optimal control problem with $f=-4,\ y_d=4$. We set $\Omega=(-1,1)^{2}$, $\lambda=1$, $a=0.5$, $b=1.5$.
\end{Example}

In the context of variational discretization,  Figure \ref{mesh3} (left) provides the initial mesh for the case of $s=0.25$ and $\theta=0.7$, while the right side depicts the final refinement mesh after 13 adaptive iterations. The primary refinement behavior is observed to occur exclusively along the boundaries of the entire square domain.  This observation suggests that the estimators effectively capture the singularities of the exact solution along the entire boundary, thus guiding the mesh refinement process. The numerically computed optimal state and adjoint state profiles are exhibited in Figure \ref{num3}.
 \begin{figure}[!htbp]
\centering
%\flushleft
\label{2a}
\includegraphics[width=6.5cm]{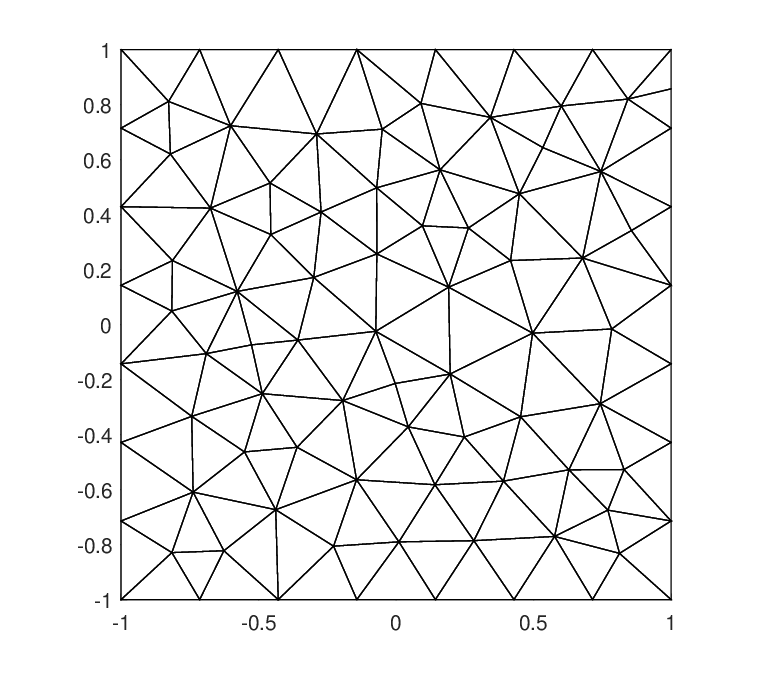}
\hspace{-0.01mm}
\label{2b}
\includegraphics[width=6.5cm]{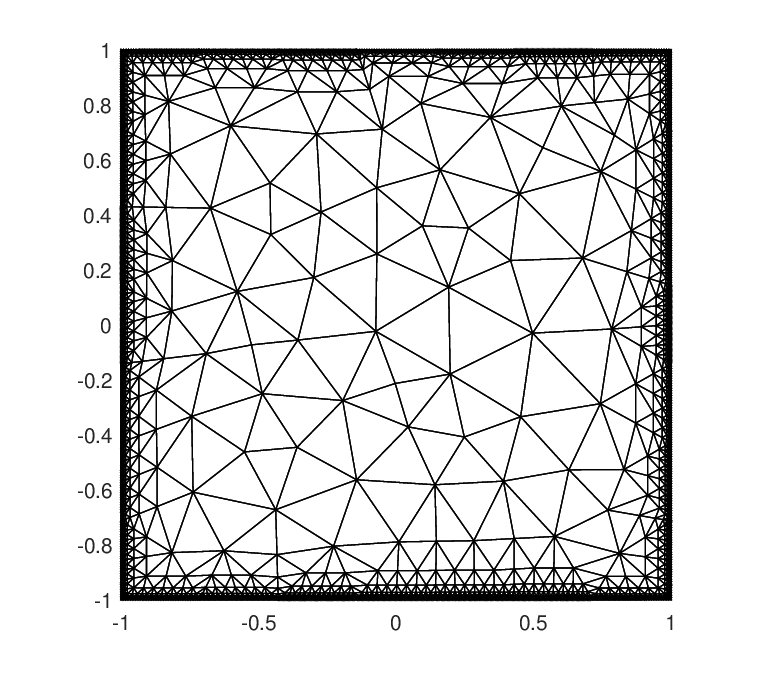}
%\hspace{-1mm}
%\label{2c}
%\includegraphics[width=4cm]{control_0.25.eps}
%\hspace{-5mm}
\caption{The initial mesh (left)  and the final refinement mesh (right) with $s=0.25$ on the square.}
\label{mesh3}
\end{figure}
\begin{figure}[!htbp]
\centering
%\flushleft
\label{2a}
\includegraphics[width=6.5cm]{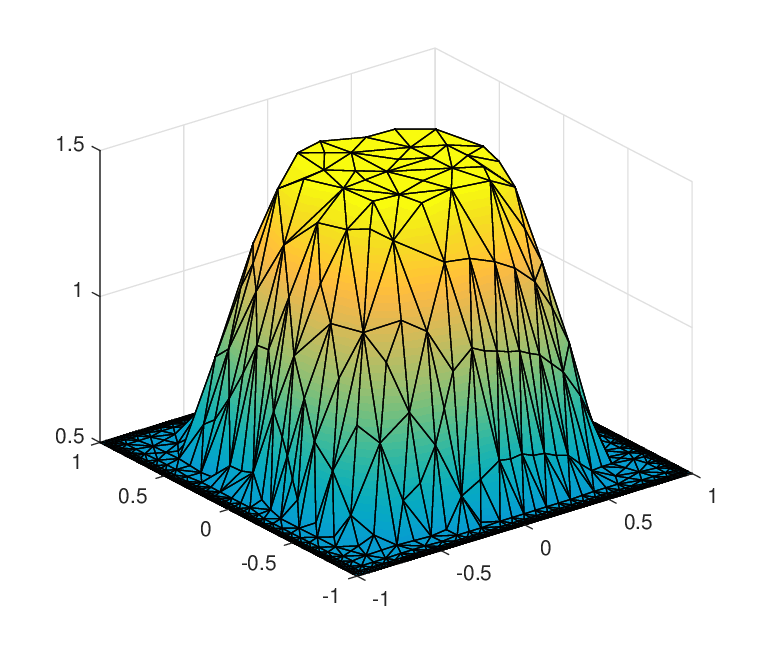}
\hspace{-0.01mm}
\label{2b}
\includegraphics[width=6.5cm]{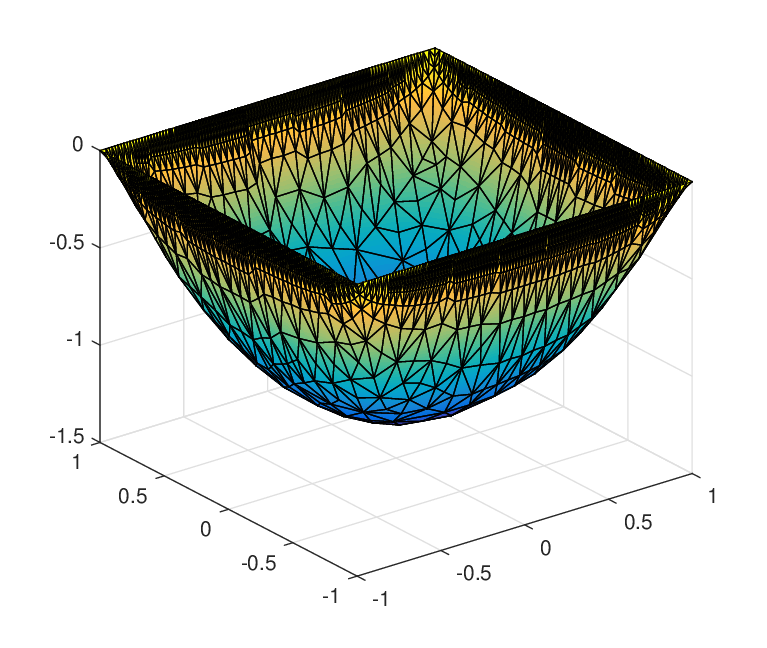}
%\hspace{-1mm}
%\label{2c}
%\includegraphics[width=4cm]{control_0.25.eps}
%\hspace{-5mm}
\caption{The profiles of the numerically computed  control and state with $s=0.75$.}
\label{num3}
\end{figure}

 It can be seen from the Figure \ref{error5} that the error indicators  $\mathcal{\eta}_{\mathcal{OCP}}$ and the error estimators   $\mathcal{\eta}_{\mathcal{Y}}(y_{\mathcal{T}_{\bullet}}),\   \mathcal{\eta}_{\mathcal{Z}}(z_{\mathcal{T}_{\bullet}}), \mathcal{\eta}_{\mathcal{U}}(u_{\mathcal{T}_{\bullet}})$ can reach the optimal convergence order for all the values of the parameter $\theta$ considered.

\begin{figure}[!htbp]
\centering
%\flushleft
\label{2a}
\includegraphics[width=6.5cm]{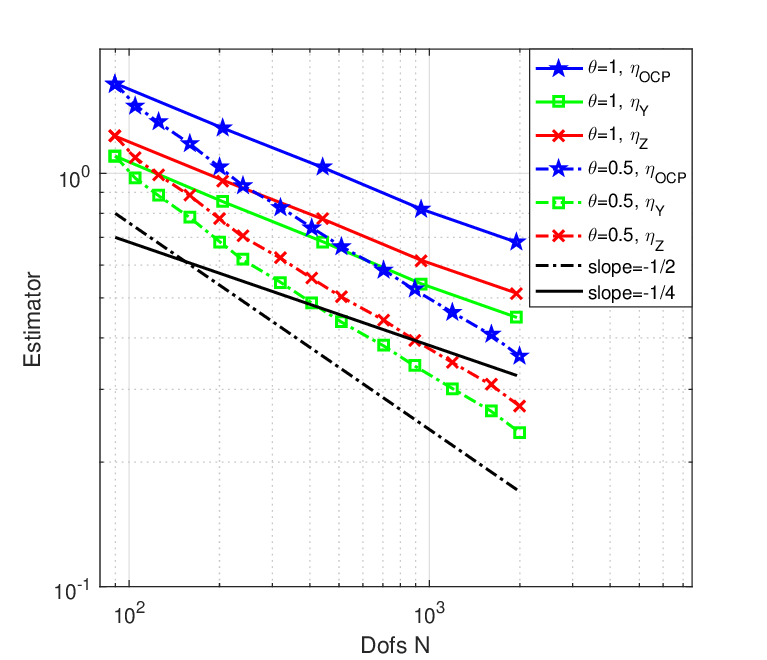}
\hspace{-0.01mm}
\label{2b}
\includegraphics[width=6.5cm]{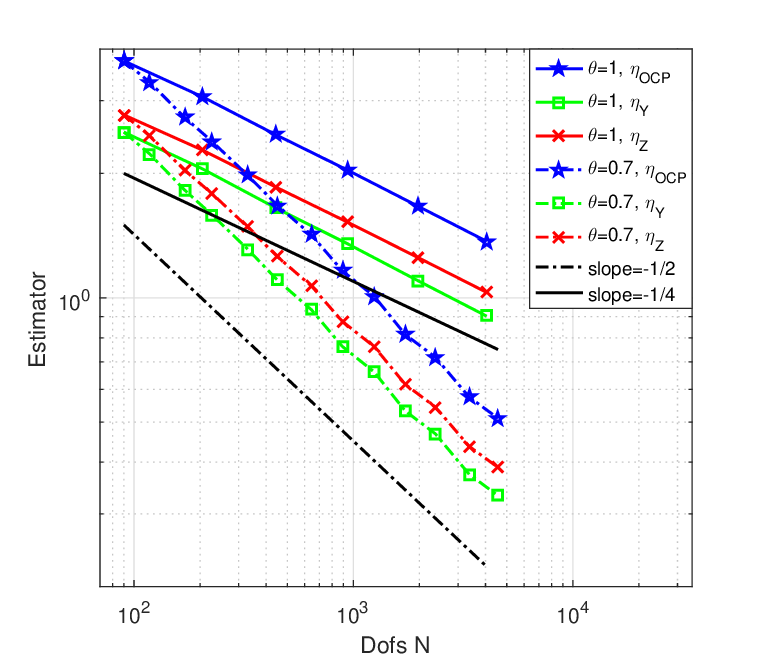}
%\hspace{-1mm}
%\label{2c}
%\includegraphics[width=4cm]{control_0.25.eps}
%\hspace{-5mm}
\caption{The convergent behaviors of the error indicators and error estimators with $s=0.75$ (left) and $s=0.25$ (right) respectively on the square.}
\label{error5}
\end{figure}

 For the full discretization,  Figure \ref{mesh4} (left) provides  the final refinement mesh after 14 adaptive iterations for the case of $s=0.75$ and $\theta=0.5$, while the right side depicts the final refinement mesh after 13 adaptive iterations with $s=0.25$ and $\theta=0.7$. The profiles of the numerical control variables are presented in Figure \ref{num4}. In Figures \ref{error6}, we provide experimental convergence rates of the a posteriori error estimators for the optimal control, state, and adjoint state.  Across all considered values of $s$ and $\theta$, we consistently observe optimal experimental convergence rates. The analysis of the convergence results for the control variables is the same as in Example \ref{exm1}.
  \begin{figure}[!htbp]
\centering
%\flushleft
\label{2a}
\includegraphics[width=6.5cm]{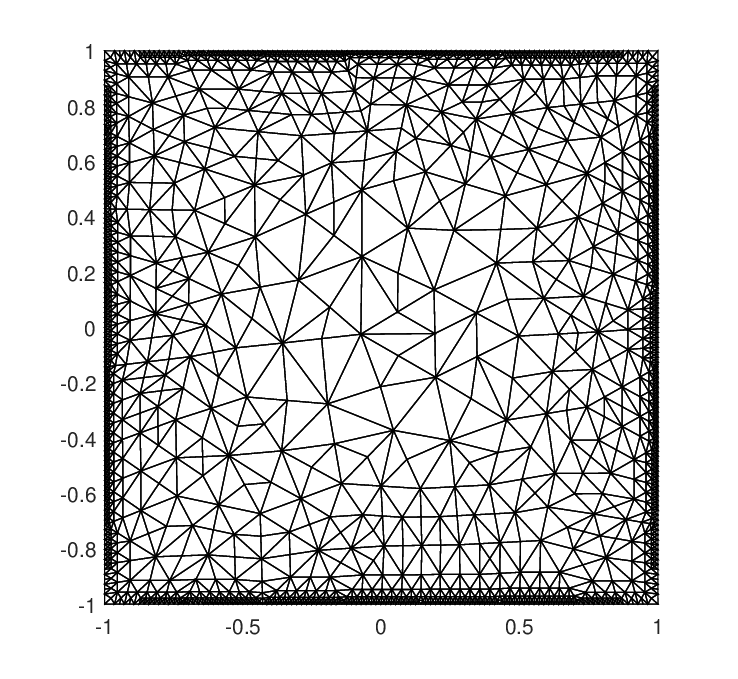}
\hspace{-0.01mm}
\label{2b}
\includegraphics[width=6.5cm]{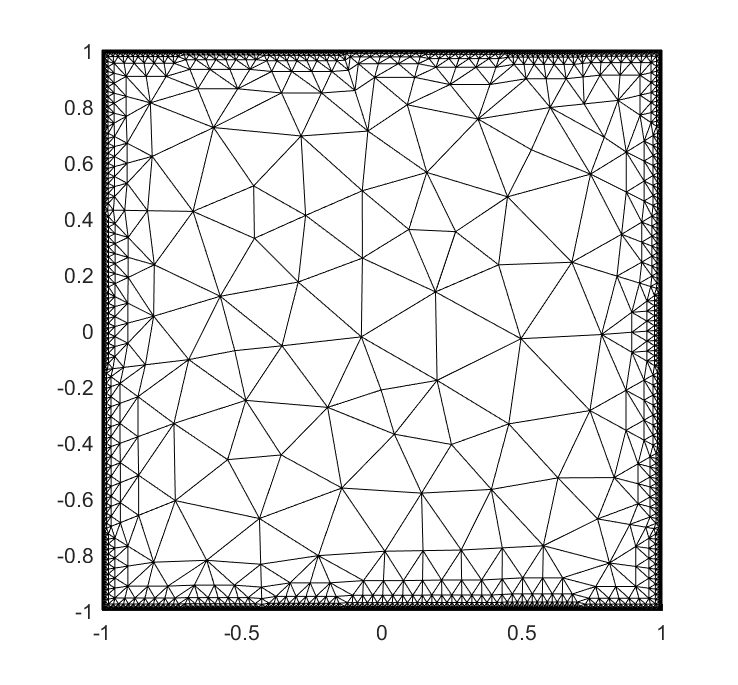}
%\hspace{-1mm}
%\label{2c}
%\includegraphics[width=4cm]{control_0.25.eps}
%\hspace{-5mm}
\caption{The meshes after 14 adaptive steps with $s=0.75$ (left) and 13 adaptive steps with $s=0.25$ (right) on the square.}
\label{mesh4}
\end{figure}
 \begin{figure}[!htbp]
\centering
%\flushleft
\label{2a}
\includegraphics[width=6.5cm]{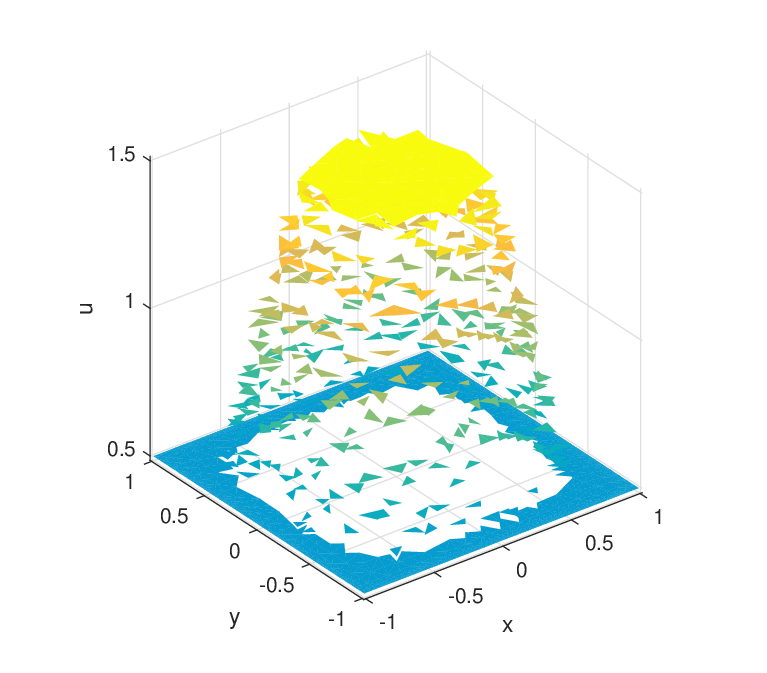}
\hspace{-0.01mm}
\label{2b}
\includegraphics[width=6.5cm]{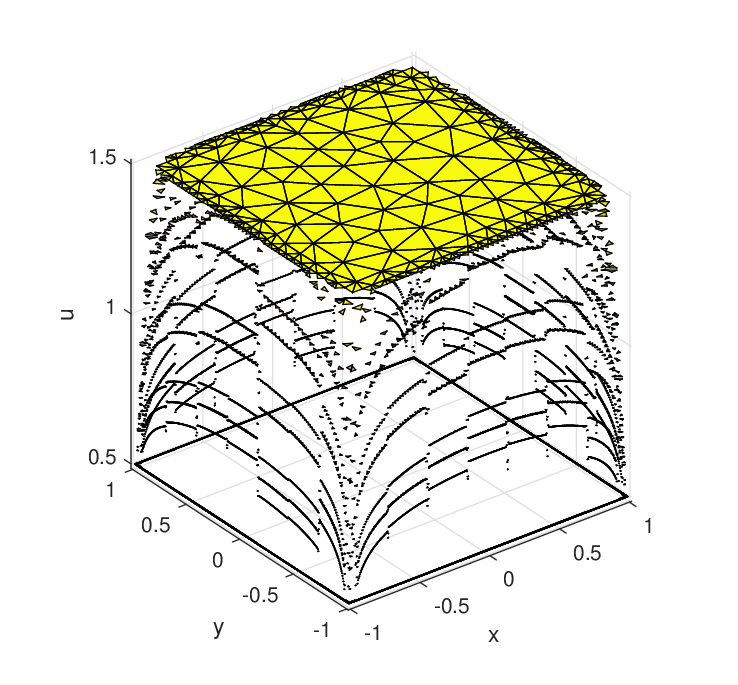}
%\hspace{-1mm}
%\label{2c}
%\includegraphics[width=4cm]{control_0.25.eps}
%\hspace{-5mm}
\caption{The profiles of the numerically computed  control  with $s=0.75$ and control  with $s=0.25$.}
\label{num4}
\end{figure}
\begin{figure}[!htbp]
\centering
%\flushleft
\label{2a}
\includegraphics[width=6.5cm]{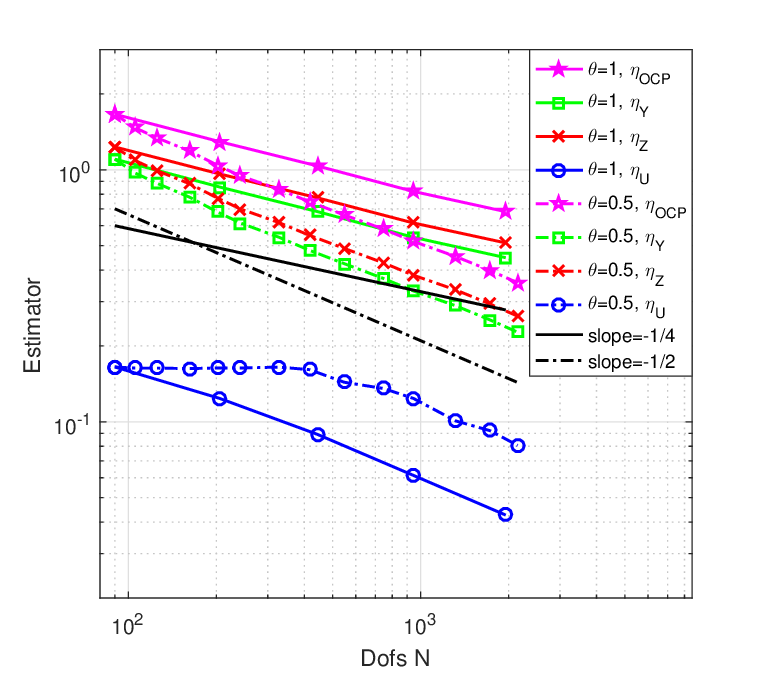}
\hspace{-0.01mm}
\label{2b}
\includegraphics[width=6.5cm]{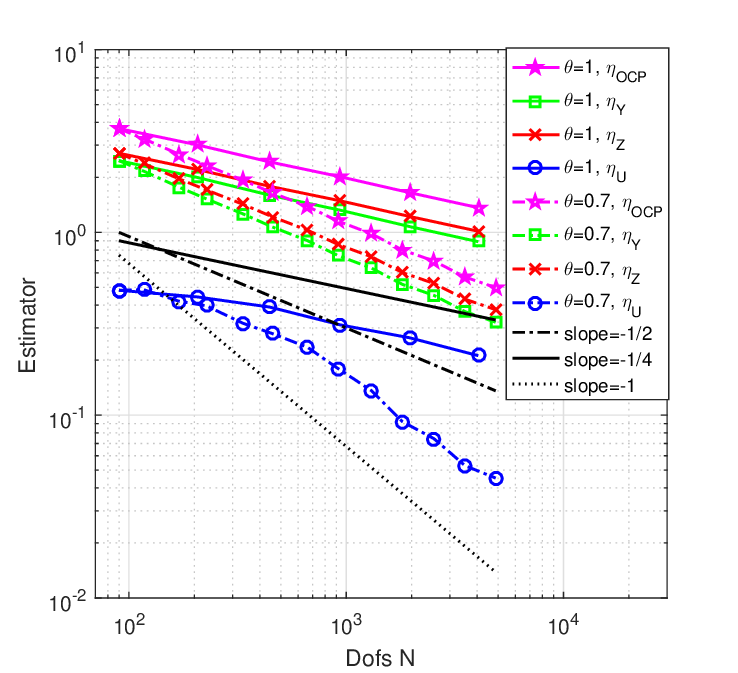}
%\hspace{-1mm}
%\label{2c}
%\includegraphics[width=4cm]{control_0.25.eps}
%\hspace{-5mm}
\caption{Computational rates of convergence for the contributions $\mathcal{\eta}_{\mathcal{Y}}(y_{\mathcal{T}_{\bullet}}),\ \mathcal{\eta}_{\mathcal{Z}}(z_{\mathcal{T}_{\bullet}})$ and $ \mathcal{\eta}_{\mathcal{U}}(u_{\mathcal{T}_{\bullet}})$ of the computable and anisotropic a posteriori error estimator  $\mathcal{\eta}_{\mathcal{OCP}}$.}
\label{error6}
\end{figure}

\section*{Acknowledgements}

The work was supported by the National Natural
Science Foundation of China under Grant No. 11971276  and 12171287.

\end{document}